\let\cal\mathcal
\newtheorem{theorem}{Theorem}
\newtheorem{lemma}[theorem]{Lemma}
\newtheorem{corollary}[theorem]{Corollary}
\newtheorem{proposition}[theorem]{Proposition}
\theoremstyle{remark}
\newtheorem{remark}[theorem]{Remark}
\theoremstyle{definition}
\newtheorem{definition}[theorem]{Definition}
\theoremstyle{remark}
\numberwithin{equation}{section}
\numberwithin{theorem}{section}
\def\Z{\mathbb Z}
\def\R{\mathbb R}
\def\M{\cal{M}}
\def\H{\cal{H}}
\def\ch{\raise 0.5ex \hbox{$\chi$}}
\def\T{\tau}
\def\E{\cal{E}}
\let\phi\varphi
\let\epsilon\varepsilon
\def\dim{\operatorname{dim}}
\renewcommand{\a}{\alpha}
\renewcommand{\b}{\beta}
\newcommand{\g}{\gamma}
\newcommand{\e}{\varepsilon}
\newcommand{\8}{\infty}
\newcommand{\el}{\ell}
\renewcommand{\l}{\lambda}
\newcommand{\La}{\Lambda}
\newcommand{\s}{\sigma}
\newcommand{\h}{\mathsf{h}}
\newcommand{\at}{\mathrm{at}}
\newcommand{\alg}{\mathrm{aa}}
\newcommand{\cat}{\mathrm{crude}}
\newcommand{\be}{\begin{eqnarray*}}
\newcommand{\ee}{\end{eqnarray*}}
\newcommand{\beq}{\begin{equation}}
\newcommand{\eeq}{\end{equation}}
\begin{document}

\title{ Atomic decompositions for noncommutative martingales}

\author[Chen]{Zeqian Chen}
\address{Wuhan Institute of Physics and Mathematics, Chinese Academy of Sciences, Wuhan 430071, China}
 \email{zqchen@wipm.ac.cn}

\author[Randrianantoanina]{Narcisse Randrianantoanina}
\address{Department of Mathematics, Miami University, Oxford,
Ohio 45056, USA}
 \email{randrin@miamioh.edu}

\author[Xu]{Quanhua Xu}
\address {Institute for Advanced Study in Mathematics, Harbin Institute of Technology,  Harbin 150001, China; and Laboratoire de Math{\'e}matiques, Universit{\'e} de Bourgogne Franche-Comt{\'e}, 25030 Besan\c{c}on Cedex, France}
\email{qxu@univ-fcomte.fr}

\date{}
\subjclass[2010]{Primary: 46L53, 60G42.  Secondary: 46L52, 60G50}
\keywords{Noncommutative martingales, Hardy spaces, square functions, atomic decomposition}

\begin{abstract}
We prove an atomic type decomposition for the noncommutative  martingale Hardy space $\h_p$ for all $0<p<2$ by an explicit constructive method using algebraic atoms as building blocks. Using this elementary construction, we obtain a weak form of the atomic decomposition of $\h_p$ for all $0< p < 1,$ and provide a constructive proof of the atomic decomposition for $p=1$. We also study $(p,\8)_c$-atoms, and show that every $(p,2)_c$-atom can be decomposed into a sum of $(p,\8)_c$-atoms; consequently, for every $0<p\le 1$, the $(p,q)_c$-atoms lead to the same atomic space for all $2\le q\le\8$. As applications, we obtain a characterization of the dual space of  the noncommutative martingale Hardy space $\h_p$ ($0<p<1$) as a noncommutative Lipschitz space via the weak form of the atomic decomposition. Our constructive method can also be applied to proving  some sharp martingale inequalities.

\end{abstract}
\maketitle

\section{introduction}

This paper follows the current line of investigation on noncommutative martingale inequalities. Thanks to its interactions with other fields such as operator spaces, noncommutative harmonic analysis and free probability, the theory of noncommutative martingale inequalities has been steadily  developing since the establishment of the noncommutative Burkholder-Gundy inequalities in \cite{PX}. In return, these noncommutative inequalities have important applications to operator spaces and quantum stochastic analysis. See for instance, \cite{Ju-OH, ju-par-GAFA, JX3, jx-orlicz, pis-shly, Xu-G} for some illustrations of applications to operator space theory.
 Many classical results have been successfully transferred to the noncommutative setting. One of them, directly relevant to the subject of the present paper, is the so-called   atomic decompositions for the noncommutative martingale Hardy spaces $\H_1$  and $\h_1$ in \cite{Bekjan-Chen-Perrin-Y}.   Atomic  decompositions are fundamental in the classical martingale theory and harmonic analysis. For instance, they are powerful tools in dealing with  various aspects  of martingale Hardy spaces such as duality, interpolation, and  many others.  Contrary to the commutative case, the approach to these decompositions in \cite{Bekjan-Chen-Perrin-Y} is based on duality arguments and therefore  not constructive. This difficulty is  explained by the noncommutativity of operator product and the lack of an efficient analogue of the notion of stopping times. Since then it had been an open problem to find a constructive proof for  the atomic decomposition of \cite{Bekjan-Chen-Perrin-Y}.  An important motivation of  finding such a constructive approach  is that it would provide new insights on Hardy spaces $\H_p$ and $\h_p$  for all $0<p<1$ that have  been previously left untouched since  duality arguments are no longer available for this range.  We would like to emphasize that Hardy  spaces for $0<p<1$ are also important objects in the classical theory. For instance, atomic  decompositions for the classical $\h_p$ for $0<p<1$ were   also extensively studied  (cf. e.g., \cite{Weisz2,Weisz}). It was also an open problem to obtain  atomic decompositions for the noncommutative  Hardy space $\h_p$ for $0<p<1$.

\smallskip

The present paper sheds  light  on all the problems mentioned above and provides a much clearer picture of the current state of arts concerning the atomic decomposition of noncommutative Hardy spaces. 

\smallskip

We give a constructive proof of the atomic decomposition for the noncommutative  Hardy space $\h_1$, thus solve the main open problem of \cite{Bekjan-Chen-Perrin-Y}. We also obtain, through an explicit method,  an atomic decomposition for the noncommutative $\h_p$ for all $0<p<1$ when the so-called algebraic atoms are used. Using this elementary construction, we obtain a weak form of the atomic decomposition of $\h_p$ for all $0< p < 1.$ The latter result allows us to describe the dual space of the quasi-Banach space $\h_p$ (for $0<p<1$) as noncommutative Lipschitz space which was another problem left open in \cite{Bekjan-Chen-Perrin-Y}.  The notion of algebraic atoms for noncommutative martingales  first appeared in the thesis of  Perrin  \cite{Perrin2}. More recently, algebraic atomic Hardy spaces  were  extensively used for the study of noncommutative maximal functions  in \cite{Hong-Junge-Parcet}. In these early instances,  only Hardy  spaces in the
Banach space range $1\leq p<2$ were considered. We formulate the notion of algebraic atoms in the more general contexts of column/row conditioned Hardy spaces to the range $0<p<1$ and use these as building blocks of  algebraic atomic Hardy spaces in this range.

\smallskip

Our approach in Section~\ref{Atomic decomposition}   is very different from the commutative case. Surprisingly,   our constructions  do   not make use  of Cuculescu's projections which  are  very often necessary  as  substitute  of  classical stoping times in the noncommutative  setting.  

\smallskip

In the definition of the atoms mentioned previously, one uses the $L_2$-norm, the resulting atoms are the so-called $(p,2)_c$-atoms. Motivated by the classical theory, Hong and Mei \cite{Hong-Mei} introduced  $(1,q)_c$-atoms for any $1<q<\8$, using the $L_q$-norm instead of the $L_2$-norm, and showed that these $(1,q)_c$-atoms lead to the same atomic space. However, their method does not work for $q=\8$ while the $(1,\8)$-atoms are the commonly used and nicest atoms in the commutative setting. Another important aspect of the present paper is to make up for this deficiency. We define $(p,\8)_c$-atoms for any $0<p\le1$ and show that these atoms lead to the same atomic space $\h_{p,\at}^{c}$ as the $(p,2)_c$-atoms. Contrary to the approach of \cite{Hong-Mei} which is based on duality, ours is constructive and explicitly decomposes every $(p,2)_c$-atom into $(p,\8)_c$-atoms. Based on Cuculescu's projections, this proof is quite elaborate and technical.

\medskip

The paper is organized as follows. In the next section,   we collect  notions and notation from noncommutative martingale theory necessary for the whole paper.  Section~\ref{Atomic decomposition} is devoted to the atomic decomposition of the noncommutative Hardy spaces $\h_p$ where $0<p<2$. The building blocks of the atomic spaces considered  are the $(p, 2)$-atoms introduced in \cite{Bekjan-Chen-Perrin-Y} and  their variants known as $(p,2)$-crude atoms formulated in \cite{Hong-Mei}. The most important notion being used however is the algebraic $\h_p^c$-atoms. In Section \ref{(p,infty)-atom}, we consider the case of $(p,q)$-atoms and prove that all atomic Hardy spaces in terms of $(p, q)$-atoms coincide for $2 \le q \le \8.$ In particular, we prove that $\h_{1,\at}^{c} = \h_{1, \8}^{c}$ which improves the related results in \cite{Bekjan-Chen-Perrin-Y} and \cite{Hong-Mei}. In Section~\ref{Applications},  we provide two important applications of our results from Sections~\ref{Atomic decomposition} and \ref{(p,infty)-atom}.


\section{Preliminary definitions}


\subsection{Noncommutative spaces}
Throughout this paper, $\M$ will always denote a von
Neumann algebra with a normal faithful normalized finite trace $\T$.  The unit of $\M$ will be denoted by ${\bf 1}$. For each
$0 < p \leq \infty,$ let $L_p (\M, \T)$ (or simply $L_p
(\M)$) be the  noncommutative $L_p$-space associated with the pair $(\M,\T)$. We refer to \cite{PX3} for details and  more historical references on noncommutative $L_p$-spaces.

For $x \in L_p (\M)$ we denote by $r(x)$ and $l(x)$ the right and left support projections  of $x$, respectively.
Recall that if $x = u |x|$ is the polar decomposition of $x$, then $r(x) = u^* u$ and $l(x) = u u^*$.  The projection
$r(x)$ (resp. $l(x)$) can  be also  characterized as the least projection $e$ such that $x e =x$ (resp. $e x =x$).
If $x$ is selfadjoint, then $r(x) = l(x)$; in this case we  simply call it the support projection of $x$ and  denote it by $s(x)$.  If $x\in L_p(\M)$ is  selfadjoint
 and $x = \int^\infty_{- \infty} s d e^x_s$ is
its spectral decomposition, then for any Borel subset $B \subseteq
\R$, we denote by $\ch_B(x)$ the corresponding spectral projection
$\int^\infty_{- \infty} \ch_B(s) d e^x_s$.

We now record few lemmas for further use.  The first one is an  elementary observation; it is a particular case  of the noncommutative Minkowski type inequality for  the case of matrices (\cite{Carlen-Lieb1999}). We include a proof for the convenience of the reader.

\begin{lemma}\label{rev-triangle}
Let $0<r<1$. Then for any positive $a$ and $b$ are positive operator in $L_r(\M)$
\[
\|a\|_r +\|b\|_r \leq \|a + b \|_r\,.
\]

\end{lemma}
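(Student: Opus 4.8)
The plan is to prove the \emph{superadditivity} of the functional $a\mapsto\|a\|_r=\tau(a^r)^{1/r}$ on the positive cone, which is exactly the asserted reverse triangle inequality. The whole argument hinges on the tangent-line (Klein) inequality for the concave function $t\mapsto t^r$: for positive $a$ and positive \emph{invertible} $c$ in $L_r(\M)$,
\[
\tau(a^r)\le r\,\tau(c^{r-1}a)+(1-r)\,\tau(c^r).
\]
I would obtain this by the standard spectral argument underlying Klein's inequality. Diagonalising $a=\sum_i\lambda_i p_i$ and $c=\sum_j\mu_j q_j$, I apply the scalar estimate $\lambda^r\le\mu^r+r\mu^{r-1}(\lambda-\mu)$ (the graph of $t^r$ lies below its tangent at $\mu$, by concavity), then sum against the nonnegative scalars $\tau(p_iq_j)$; cyclicity of $\tau$ rewrites $\tau\big((a-c)c^{r-1}\big)$ as $\tau(c^{r-1}a)-\tau(c^r)$, giving the displayed bound.

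Next I would homogenise to upgrade this to a \emph{reverse Hölder} inequality. Replacing $c$ by $\lambda c$ with $\lambda>0$ turns the right-hand side into $r\lambda^{r-1}\tau(c^{r-1}a)+(1-r)\lambda^r\tau(c^r)$; a one-variable minimisation (the minimiser is $\lambda=\tau(c^{r-1}a)/\tau(c^r)$) yields the scale-invariant estimate
\[
\tau(a^r)\le\tau(c^{r-1}a)^{\,r}\,\tau(c^r)^{\,1-r}.
\]
In particular, imposing the normalisation $\tau(c^r)=1$ this reads $\|a\|_r\le\tau(c^{r-1}a)$, an estimate that is \emph{linear} in $a$ for each fixed admissible $c$ and that becomes an equality at $c=a/\|a\|_r$.

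The conclusion is then immediate from this linearity. Assuming first that $a+b$ is invertible, I set $c=(a+b)/\|a+b\|_r$, so that $\tau(c^r)=1$ and $\tau\big(c^{r-1}(a+b)\big)=\|a+b\|_r$; applying the reverse Hölder estimate to $a$ and to $b$ separately gives
\[
\|a+b\|_r=\tau\big(c^{r-1}(a+b)\big)=\tau(c^{r-1}a)+\tau(c^{r-1}b)\ge\|a\|_r+\|b\|_r.
\]
The general case follows by replacing $a,b$ with $a+\e\mathbf{1}$ and $b+\e\mathbf{1}$ and letting $\e\to0$, using continuity of the $r$-functional, while the degenerate cases $a=0$ or $b=0$ are trivial.

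The step I expect to demand the most care is the Klein inequality for non-commuting $a$ and $c$: the scalar concavity estimate must be lifted to the trace through the spectral projections, and one must ensure $c^{r-1}$ (with exponent in $(-1,0)$) is well defined, which is precisely what forces the reduction to invertible $a+b$ and the $\e$-regularisation at the end. Everything else is routine manipulation with the trace together with the elementary one-variable optimisation.
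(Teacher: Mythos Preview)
Your proof is correct and, interestingly, lands on exactly the same intermediate inequality as the paper, namely
\[
\|a\|_r \le \tau\big((a+b)^{r-1}a\big)\,\tau\big((a+b)^r\big)^{(1-r)/r},
\]
after which both arguments add the analogous estimate for $b$ and conclude. The difference lies entirely in how this estimate is reached. The paper inserts powers of $a+b$ around $a^r$, applies the (forward) H\"older inequality with exponents $1/r$ and $1/(1-r)$, and then invokes Kosaki's Araki--Lieb--Thirring inequality to pass from $\tau\big([(a+b)^{(r^2-r)/2}a^r(a+b)^{(r^2-r)/2}]^{1/r}\big)$ to $\tau\big((a+b)^{r-1}a\big)$. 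Your route, by contrast, derives the same bound from the tracial Klein inequality for the concave function $t\mapsto t^r$ followed by a one-variable optimisation in the scaling parameter. This is more elementary: it avoids the Kosaki inequality entirely and uses only scalar concavity lifted to the trace. One small caveat: your spectral-sum sketch of Klein's inequality presumes discrete spectra, which is not available in a general von Neumann algebra; the result is of course standard and follows from the integral form of the spectral theorem, but you might say so explicitly. The $\varepsilon$-regularisation to make $a+b$ invertible is identical in both proofs, and finiteness of $\tau\big((a+b)^{r-1}a\big)$ is guaranteed by $a\le a+b$ (so $(a+b)^{(r-1)/2}a(a+b)^{(r-1)/2}\le(a+b)^r\in L_1$).
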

\begin{proof}
 Considering $a +b +\epsilon{\bf 1}$ with $\epsilon>0$ instead of $a +b$ if necessary, we can assume $a +b$ invertible. Write
\[
\|a\|_r^r=\T \Big( \big[ (a+b)^{(r^2-r)/2} a^r(a+b)^{(r^2-r)/2}\big] (a+b)^{r-r^2}\Big)\,.
\]
Using H\"older's inequality,
\[
\|a\|_r^r\leq \T \Big( \big[ (a+b)^{(r^2-r)/2} a^r(a+b)^{(r^2-r)/2}\big]^{1/r}\Big)^{r} \T \Big( (a+b)^{(r-r^2)/(1-r)}\Big)^{1-r}\,.
\]
Since $1/r >1$, we apply \cite{Kosaki} to further get  that
\begin{equation*}
\begin{split}
\|a\|_r 
&\leq \T \big(  (a+b)^{(r-1)/2} a(a+b)^{(r-1)/2}\big) \T \big( (a+b)^{(r-r^2)/(1-r)}\big)^{(1-r)/r}\\
&=\T \big(  (a+b)^{r-1} a\big) \T \big( (a+b)^{r}\big)^{(1-r)/r}\,.
\end{split}
\end{equation*}
Similarly,
\[
\|b\|_r \leq\T \big(  (a+b)^{r-1} b\big) \T \big( (a+b)^{r}\big)^{(1-r)/r}\,.
\]
Thus,
\begin{equation*}
\begin{split}
\|a\|_r +\|b\|_r &\leq \T \big(  (a+b)^{r-1}(a + b)\big) \T \big( (a+b)^{r}\big)^{(1-r)/r}
=\T \big( (a+b)^{r}\big)^{1/r}\,.
\end{split}
\end{equation*}
The lemma is proved.
\end{proof}

As an immediate consequence of Lemma~\ref{rev-triangle}, we have  the following inequality:
\begin{lemma}\label{com-l2}
Let $0<p \leq 2$. Then, for any sequence $(a_n)_{n\geq 1}$ in $L_p(\M)$,
\[
\Big(\sum_{n\geq 1} \big\| a_n\big\|_p^2 \Big)^{1/2} \leq  \Big\| \big(\sum_{n\geq 1}|a_n|^2 \big)^{1/2}\Big\|_p.
\]
\end{lemma}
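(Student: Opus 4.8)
The plan is to reduce the inequality to Lemma~\ref{rev-triangle} via the homogeneity of the $L_p$-norms. Assume first that $0<p<2$ and set $r=p/2\in(0,1)$. For each $n$ put $b_n=|a_n|^2$, which is a positive element of $L_r(\M)$, and record the two identities $\|b_n\|_r=\||a_n|^2\|_{p/2}=\|a_n\|_p^2$ and $\big\|(\sum_{n\ge1}|a_n|^2)^{1/2}\big\|_p^2=\big\|\sum_{n\ge1}b_n\big\|_{p/2}=\big\|\sum_{n\ge1}b_n\big\|_r$. After taking square roots, the asserted inequality becomes precisely $\sum_{n\ge1}\|b_n\|_r\le\big\|\sum_{n\ge1}b_n\big\|_r$ for positive operators $(b_n)$.

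To establish this I would first handle finite sums: iterating Lemma~\ref{rev-triangle} gives, by induction on $N$, that $\sum_{n=1}^N\|b_n\|_r\le\big\|\sum_{n=1}^N b_n\big\|_r$, the inductive step being the application of the lemma to $a=\sum_{n=1}^N b_n$ and $b=b_{N+1}$. Then I pass to the limit. If $\sum_{n\ge1}b_n$ does not belong to $L_r(\M)$, the right-hand side of the desired inequality is $+\8$ and there is nothing to prove; otherwise, since $\sum_{n=1}^N b_n\le\sum_{n\ge1}b_n$ as positive operators and $t\mapsto t^r$ is operator monotone (so $\T$ of it is monotone on positive elements), we get $\big\|\sum_{n=1}^N b_n\big\|_r\le\big\|\sum_{n\ge1}b_n\big\|_r$ for every $N$. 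Letting $N\to\8$ yields $\sum_{n\ge1}\|b_n\|_r\le\big\|\sum_{n\ge1}b_n\big\|_r$, which is the lemma for $0<p<2$.

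The endpoint $p=2$ is not covered by Lemma~\ref{rev-triangle} (which requires $r<1$) and must be treated separately, but there the statement is even an equality: $\big\|(\sum_{n\ge1}|a_n|^2)^{1/2}\big\|_2^2=\T\big(\sum_{n\ge1}|a_n|^2\big)=\sum_{n\ge1}\T(|a_n|^2)=\sum_{n\ge1}\|a_n\|_2^2$. I do not expect any serious obstacle here; the only points deserving a little care are the passage from finite to infinite sums (which is routine once one separates the convergent and divergent cases) and the fact that the borderline exponent $p=2$ has to be argued by hand rather than through Lemma~\ref{rev-triangle}.
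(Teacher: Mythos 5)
Your proposal is correct and follows essentially the same route as the paper: reduce to the superadditivity $\sum_n\|b_n\|_{p/2}\le\|\sum_n b_n\|_{p/2}$ for positive operators via Lemma~\ref{rev-triangle}, with the case $p=2$ handled trivially by trace linearity. The extra care you take with the induction over finite sums and the passage to the limit is sound but is exactly what the paper's terser "it follows immediately" implicitly contains.
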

\begin{proof}
Since the case $p=2$ is trivial, we assume that $0<p<2$.  It follows immediately from Lemma~\ref{rev-triangle} that
\begin{equation*}
\begin{split}
\sum_{n\geq 1} \big\| a_n\big\|_p^2 &= \sum_{n\geq 1} \| |a_n|^2 \|_{p/2}\\
&\leq  \| \sum_{n\geq 1} |a_n|^2 \|_{p/2}\\
&=\big\| \big(\sum_{n\geq 1} |a_n|^2\big)^{1/2}\big\|_p^2.
\end{split}
\end{equation*}
This  verifies the desired inequality.
\end{proof}

The above lemma can also be deduced from  noncommutative  Khintchine inequality
(\cite{LP4, LPI, P-Ricard}) but  with some constants. The next lemma is  implicit in the proof of \cite[Proposition~3.2]{Bekjan-Chen-Perrin-Y}.  It  can also be deduced from \cite[Lemma~7.3]{JX} but with a different constant.

\medskip

\begin{lemma}\label{mart-2}
 Let $0<p <2$ and $a$   be an invertible positive operator  with bounded inverse.  If  $0\leq b$  and $b^2 \leq a^2$, then
 \[
 \T\big( a^{-2+p} (a^2 -b^2)\big) \leq  \frac{2}{p} \T\big(a^p-b^p\big).
 \]
\end{lemma}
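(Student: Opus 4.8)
The plan is to integrate the derivative of $t\mapsto\T\big((b^2+t(a^2-b^2))^{p/2}\big)$ along the straight-line path from $b^2$ to $a^2$ and to bound that derivative from below using operator monotonicity. Put $c_t=\big(b^2+t(a^2-b^2)\big)^{1/2}$ for $t\in[0,1]$, so that $c_0=b$, $c_1=a$, and $t\mapsto c_t^2$ is affine with constant derivative $a^2-b^2$. Two facts are decisive: since $a^2-c_t^2=(1-t)(a^2-b^2)\ge0$, we have $c_t^2\le a^2$ for all $t$; and since $c_t^2=(1-t)b^2+ta^2\ge ta^2\ge t\|a^{-1}\|^{-2}\mathbf1$, the operator $c_t$ is invertible for $t>0$, with $c_t^{p-2}\in\M$. (We may assume $a\in\M$, reducing to this case by a routine truncation; then every $c_t$ is bounded.)

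I would first record the elementary identity $\frac{d}{dt}\T\big(g(c_t^2)\big)=\T\big(g'(c_t^2)(a^2-b^2)\big)$, valid for any $g\in C^1$ on a compact subinterval of $(0,\8)$ containing the spectra of $c_t^2$ for $t\in[\e,1]$; this is clear for polynomials by cyclicity of $\T$ and extends to general $g$ by uniform approximation of $g$ and $g'$ by polynomials. Taking $g(s)=s^{p/2}$ gives $\frac{d}{dt}\T(c_t^p)=\frac p2\,\T\big(c_t^{p-2}(a^2-b^2)\big)$. Integrating over $[\e,1]$ and letting $\e\downarrow0$ — on the left, $c_\e^2\to b^2$ in norm, hence $c_\e^p\to b^p$ in norm and $\T(c_\e^p)\to\T(b^p)$ since $\T$ is norm continuous; on the right, monotone convergence applies because the integrand is nonnegative and, by the bound below, integrable near $0$ — produces
\[
\T(a^p-b^p)=\frac p2\int_0^1\T\big(c_t^{p-2}(a^2-b^2)\big)\,dt .
\]

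The constant then comes out of one comparison. Since $1-p/2\in(0,1)$, the map $s\mapsto s^{p/2-1}=s^{-(1-p/2)}$ is operator monotone \emph{decreasing} on $(0,\8)$ (it is $u\mapsto u^{-1}$ composed with the operator monotone map $s\mapsto s^{1-p/2}$). Hence $c_t^2\le a^2$ gives $c_t^{p-2}\ge a^{p-2}\ge0$, and $c_t^2\ge ta^2$ gives $c_t^{p-2}\le t^{p/2-1}a^{p-2}$. As $a^2-b^2\ge0$ and the trace of a product of two positive operators is nonnegative, the first inequality yields $\T\big(c_t^{p-2}(a^2-b^2)\big)\ge\T\big(a^{p-2}(a^2-b^2)\big)$ for every $t\in(0,1]$, and the second yields $\T\big(c_t^{p-2}(a^2-b^2)\big)\le t^{p/2-1}\T\big(a^{p-2}(a^2-b^2)\big)\le t^{p/2-1}\T(a^p)$, which (with $\int_0^1 t^{p/2-1}\,dt=2/p<\8$) justifies the limit step above. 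Substituting the lower bound into the displayed identity gives $\T(a^p-b^p)\ge\frac p2\int_0^1\T\big(a^{p-2}(a^2-b^2)\big)\,dt=\frac p2\,\T\big(a^{p-2}(a^2-b^2)\big)$, which is the assertion.

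The only genuinely analytic points — and the step I expect to need the most care — are the derivative-of-the-trace formula and the interchange of limit and integral at $t=0$, where $b^2$ need not be invertible and $s\mapsto s^{p/2}$ is not differentiable at the origin; both are handled by staying on $[\e,1]$, where the spectra of $c_t^2$ lie in a fixed compact subinterval of $(0,\8)$, and then letting $\e\downarrow0$ as above, so no analogue of stopping times is needed. (Alternatively one can skip the path altogether: Klein's trace inequality applied to the concave function $s\mapsto s^{p/2}$ gives $\T(b^p)-\T(a^p)\le\frac p2\,\T\big(a^{p-2}(b^2-a^2)\big)$ at once, which rearranges to the claim; I prefer the path argument because it is elementary and does not invoke Klein's inequality.)
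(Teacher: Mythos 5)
Your proof is correct. Note that the paper does not actually prove Lemma~\ref{mart-2}: it only remarks that the inequality is implicit in the proof of \cite[Proposition~3.2]{Bekjan-Chen-Perrin-Y} and can be extracted from \cite[Lemma~7.3]{JX} with a different constant. So there is no in-text argument to compare against; what you have supplied is a self-contained proof of a statement the paper merely cites. Your route --- integrating $\frac{d}{dt}\T\big((b^2+t(a^2-b^2))^{p/2}\big)$ along the affine path and bounding the integrand below via the operator anti-monotonicity of $s\mapsto s^{p/2-1}$ --- is exactly the first-order concavity (Klein-type) inequality for $f(s)=s^{p/2}$ at the point $a^2$, which is also what your parenthetical one-line alternative says; the two versions are the same estimate, with the path argument serving as an elementary proof of the Klein inequality in this special case. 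All the delicate points are handled correctly: the derivative-of-trace formula is justified for polynomials by cyclicity and extended by uniform approximation on $[\e,1]$, where the spectra of $c_t^2$ stay in a fixed compact subinterval of $(0,\infty)$ thanks to $c_t^2\ge t\|a^{-1}\|^{-2}\mathbf 1$; the passage $\e\downarrow 0$ uses norm continuity of $s\mapsto s^{p/2}$ on the left and monotone convergence (with the integrable majorant $t^{p/2-1}\T(a^p)$) on the right; and the trace comparison $\T(XZ)\ge\T(YZ)$ for $X\ge Y\ge 0$, $Z\ge0$ is used correctly. The constant $2/p$ emerges as $\int_0^1$ of the lower bound times $p/2$, matching the statement. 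This is a perfectly acceptable proof, arguably more transparent than chasing the inequality through the cited references.
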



\subsection{Noncommutative martingales}
Let us now recall the general setup for noncommutative martingales.
In the sequel, we always denote by $(\M_n)_{n \geq 1}$ an
increasing sequence of von Neumann subalgebras of ${\M}$
whose union  is w*-dense in
$\M$. For $n\geq 1$, ${\E}_n$ denotes the trace preserving conditional expectation
from ${\M}$ onto  ${\M}_n$.

\begin{definition}
A sequence $x = (x_n)_{n\geq 1}$ in $L_1(\M)$ is called \emph{a
noncommutative martingale} with respect to $({\M}_n)_{n \geq
1}$ if $\mathcal{E}_n (x_{n+1}) = x_n$ for every $n \geq 1.$
\end{definition}
If in addition, all $x_n$'s belong to $L_p(\mathcal{M})$ for some $1
\leq p \leq \infty$  then  $x$ is called an $L_p$-martingale.
In this case we set
\begin{equation*}\| x \|_p = \sup_{n \geq 1} \|
x_n \|_p.
\end{equation*}
If $\| x \|_p < \infty$, then $x$ is called
a bounded $L_p$-martingale.

Let $x = (x_n)$ be a noncommutative martingale with respect to
$(\M_n)_{n \geq 1}$.  Define $dx_n = x_n - x_{n-1}$ for $n
\geq 1$ with the usual convention that $x_0 =0$. The sequence $dx =
(dx_n)$ is called the \emph{martingale difference sequence} of $x$. A martingale
$x$ is called  \emph{a finite martingale} if there exists $N$ such
that $d x_n = 0$ for all $n \geq N.$
In the sequel, for any operator $x\in L_1(\M)$,  we denote $x_n=\E_n(x)$ for $n\geq 1$.

Let us now  review the definitions of the square functions and Hardy spaces of noncommutative martingales.
Following \cite{PX}, we consider  the column and row versions of square functions
relative to a (finite) martingale $x = (x_n)$ as follows:
 \[
 S_{c,n} (x) = \Big ( \sum^n_{k = 1} |dx_k |^2 \Big )^{1/2}, \quad
 S_c (x) = \Big ( \sum^{\infty}_{k = 1} |dx_k |^2 \Big )^{1/2}
 \]
and
 \[S_{r,n} (x) = \Big ( \sum^n_{k = 1} | dx^*_k |^2 \Big )^{1/2}, \quad
 S_r (x) = \Big ( \sum^{\infty}_{k = 1} | dx^*_k |^2 \Big)^{1/2}.
 \]
Let $0 <p < \infty$.
Define $\mathcal{H}_p^c (\mathcal{M})$
(resp. $\mathcal{H}_p^r (\mathcal{M})$) as the completion of all
finite $L_\infty$-martingales under the (quasi) norm $\| x \|_{\mathcal{H}_p^c}=\| S_c (x) \|_p$
(resp. $\| x \|_{\mathcal{H}_p^r}=\| S_r (x) \|_p $).
The mixture Hardy space of noncommutative martingales is defined as
follows. For $0< p < 2,$
\begin{equation*}
\mathcal{H}_p(\mathcal{M})
= \mathcal{H}_p^c (\mathcal{M}) + \mathcal{H}_p^r(\mathcal{M})
\end{equation*}
equipped with the (quasi) norm
\begin{equation*}
\| x \|_{\mathcal{H}_p} =
\inf \big \{ \| y\|_{\mathcal{H}_p^c} + \| z \|_{\mathcal{H}_p^r} \big\},
\end{equation*}
where the infimum is taken over all
$y \in\mathcal{H}_p^c (\mathcal{M} )$ and $z \in \mathcal{H}_p^r(\mathcal{M} )$
such that $x = y + z$.
For $2 \leq p <\infty,$
\begin{equation*}
\mathcal{H}_p (\mathcal{M}) =
\mathcal{H}_p^c (\mathcal{M}) \cap \mathcal{H}_p^r(\mathcal{M})
\end{equation*}
equipped with the norm
\begin{equation*}
\| x \|_{\mathcal{H}_p} =
\max \big \{ \| x\|_{\mathcal{H}_p^c} ,\; \| x \|_{\mathcal{H}_p^r} \big\}.
\end{equation*}
The differences between the two cases  $0 < p < 2$ and $2 \leq p  \leq \infty$ are now well-documented in the literature.

We now consider the conditioned version of $\H_p$ developed in \cite{JX}.
Let $x = (x_n)_{n \geq 1}$ be a finite martingale in $L_2(\M)$.
We set (with the convention that $\E_0=\E_1$)
 \[
 s_{c,n} (x) = \Big ( \sum^n_{k = 1} \E_{k-1}|dx_k |^2 \Big )^{1/2}, \quad
 s_c (x) = \Big ( \sum^{\infty}_{k = 1} \E_{k-1}|dx_k |^2 \Big )^{1/2}
 \]
and
 \[
 s_{r,n} (x) = \Big ( \sum^n_{k = 1} \E_{k-1}| dx^*_k |^2 \Big )^{1/2}, \quad
 s_r (x) = \Big ( \sum^{\infty}_{k = 1} \E_{k-1}| dx^*_k |^2 \Big)^{1/2}.
 \]
These are called the column and row conditioned square functions, respectively.
Let $0< p \le \infty$.
We define $\h_p^c (\mathcal{M})$ (resp. $\h_p^r (\mathcal{M})$) as the completion of all
finite $L_\infty$-martingales under the (quasi) norm $\| x \|_{\h_p^c}=\| s_c (x) \|_p$
(resp. $\| x \|_{\h_p^r}=\| s_r (x) \|_p $). Note that for $2 \le p \le \8,$ $\h_p^c (\mathcal{M})$ (resp. $\h_p^r (\mathcal{M})$) coincides with the space of all martingales $x$ for which $s_c (x) \in L_p (\M)$ (resp. $s_r (x) \in L_p (\M)$) and $\|x\|_{\h_p^c}=\| s_c (x) \|_p$ (resp. $\|x\|_{\h_p^r}=\| s_r (x) \|_p$).

We remark that by the boundedness of the conditional expectations, we have for $1\leq p<\infty$,
\[
L_p(\M)= L_p^0(\M) \oplus L_p(\M_1)
\]
where $L_p^0(\M)= \big\{ x \in L_p(\M); \E_1(x)=0\big\}$. It is worth pointing out that such direct sum is not valid for $0<p<1$ since conditional expectations are not well-defined in this range. However,
the same phenomena occur for $0<p<1$ when Hardy spaces are used. Indeed, since $L_2(\M)$ is dense in $\h_p^c(\M)$ and  $\max\big\{ \|x-\E_1(x)\|_{\h_p^c}, \| \E_1(x)\|_p \big\} \leq \|x\|_{\h_p^c}$ for  every $x \in L_2(\M)$, we may state that
\begin{equation}\label{direct-sum}
\h_p^c(\M)= \h_p^{0,c}(\M) \oplus L_p(\M_1)
\end{equation}
where $\h_p^{0,c}(\M)$ is the completion of  the linear space $L_2^0(\M)$ under the $\h_p^c$-norm. This direct sum allows us to formally isolate the first term of any given martingale from $\h_p^c(\M)$ which will be very crucial in the sequel.

We also need $\ell_p(L_p(\M))$, the space of all sequences $a=(a_n)_{n\geq 1}$ in $L_p(\M)$ such that
 \[
 \|a\|_{\ell_p(L_p(\M))}=\Big(\sum_{n\geq 1}\|a_n\|_p^p\Big)^{1/p} <\infty.
 \]
Let $\h_p^d(\M)$ be the subspace of $\ell_p(L_p(\M))$ consisting of all martingale difference sequences.

We define the conditioned version of martingale Hardy spaces as follows.
If $0< p < 2,
$\begin{equation*}
\h_p(\mathcal{M}) = \h_p^d (\mathcal{M}) +  \h_p^c (\mathcal{M})
+  \h_p^r (\mathcal{M})
\end{equation*}
equipped with the (quasi) norm
\begin{equation*}
\| x \|_{\h_p} = \inf \big \{ \| w \|_{ \h_p^d} + \| y
\|_{ \h_p^c} + \| z \|_{ \h_p^r} \big \},
\end{equation*}
where the infimum is taken over all $w \in  \h_p^d
(\M), y \in  \h_p^c (\M)$, and $ z \in \h_p^r
(\M)$ such that $ x = w + y + z.$
 If $2 \leq p <\infty,
$\begin{equation*}\h_p (\M) =  \h_p^d
(\M) \cap  \h_p^c (\M) \cap  \h_p^r (\M)
\end{equation*}equipped with the norm\begin{equation*}
\| x \|_{\h_p} = \max \big \{ \| x \|_{ \h_p^d}, \|x
\|_{ \h_p^c}, \| x \|_{ \h_p^r} \big \}.
\end{equation*}
From the noncommutative Burkholder-Gundy inequalities and Burkholder inequalities in \cite{JX, PX}, we have for every $1<p<\infty$,
 \[
 \mathcal{H}_p (\M)= \h_p ({\M}) = L_p({\M})
 \]
with equivalent norms. In this paper, we will be mainly  concerned with  the case $0<p\leq 1$ but most of the tools we use apply to $1<p<2$ too.


\section{Atomic decomposition}\label{Atomic decomposition}


We begin with introducing various concepts of noncommutative atoms from \cite{Bekjan-Chen-Perrin-Y,Hong-Mei, Perrin2}.

\begin{definition} Let $0<p <2$.  An operator $a \in L_2(\M)$ is called a $(p,2)_c$-{\em atom}, if there exist $n\geq 1$ and a projection $e\in \M_n$ such that
\begin{enumerate}[{\rm (i)}]

\item  $\mathcal{E}_n (a) =0$;

\item  $r(a) \leq e$;

\item  $ \| a \|_{2} \leq \T(e)^{1/2-1/p}$.
\end{enumerate}
Replacing $\mathrm{(ii)}$ by $ \mathrm{(ii)'}~~l (a)
\leq e$,  we have the notion of $(p,2)_r$-\emph{atoms}.
\end{definition}

Clearly, $(p,2)_c$-atoms and $(p,2)_r$-atoms are noncommutative analogues of $(p,2)$-atoms  for commutative martingales. We refer to \cite{Weisz2, Weisz} for more on the notion of atoms in the classical setting.

\begin{definition}
 Let  $0<p\le 1$. Let $\h_{p,\at}^{c}(\M)$ be the space of all operators $x \in L_p(\M)$ which  can be represented as 
\[
x= \sum_k \lambda_k a_k \quad \text{(convergence in $L_p(\M)$)},
\]
where for each $k$, $a_k$ is either a $(p,2)_c$-atom  or an element of the unit ball of $L_p(\M_1)$, and $\lambda_k \in \mathbb{C}$ satisfying $\sum_k|\lambda_k|^p <\infty$.  For $x\in\h_{p,\at}^{c}(\M)$ we define 
\[
\|x\|_{\h_{p,\at}^c}=\inf\Big(\sum_k |\lambda_k|^p\Big)^{1/p},
\]
where the infimum is taken over all  decompositions of $x$ described above. 
 \end{definition}

\begin{remark}
 We could also extend the previous definition of $\h_{p,\at}^{c}(\M)$ to the range $1<p<2$; but then one easily sees that $\|x\|_{\h_{p,\at}^c}=0$ for any $(p,2)_c$-atom $x$. An alternate choice would be to take the $\el_1$-norm of the sequence $(\l_k)$ in the above infimum for $1<p<2$; however the resulting space is too small to coincide with $\h_{p}^{c}(\M)$. Despite these drawbacks, some of our results subsist for $1<p<2$. One of them is that any $x\in \h_{p}^{c}(\M)$ admits an atomic decomposition
 \[
x= \sum_k \lambda_k a_k \; \text{ with }\;  \sum_k|\lambda_k|^p <\infty.
\]
  \end{remark}

It is clear that $\h_{p,\at}^{c}(\M)$ is a quasi-Banach space (a Banach space for $p=1$). Similarly, we also define the row version $\h_{p,\at}^r(\M)$ using  $(p,2)_r$-atoms. For mixed Hardy spaces, we  define the atomic Hardy space by setting for $0<p\le 1$,
\[
\h_{p,\at}(\M)=\h_{p}^d(\M) + \h_{p,\at}^c(\M) + \h_{p,\at}^r(\M)
\]
equipped with the (quasi) norm
\begin{equation*}
\| x \|_{\h_{p,\at}} = \inf \big\{ \| w \|_{ \h_p^d} + \| y
\|_{ \h_{p,\at}^c} + \| z \|_{ \h_{p,\at}^r} \big\},
\end{equation*}
where the infimum is taken over all $w \in  \h_p^d
(\M), y \in  \h_{p,\at}^c (\M)$, and $ z \in \h_{p,\at}^r
(\M)$ such that $ x = w + y + z$.

\medskip

A weakening of the notion of noncommutative atoms
was   introduced in \cite{Hong-Mei} for $p=1$. We formulate it here for $0<p<2$. This weaker notion will play an important role in the sequel.

\begin{definition}
 Let $0<p< 2$. An operator $a \in L_p(\M)$ is called a $(p,2)_c$-{\em crude atom}, if there exist $n\geq 1$ and a factorization $a=yb$ such that:
\begin{enumerate}[{\rm (i)}]
\item  $y\in L_{2}(\M)$, $\E_n(y)=0$ and $\|y\|_{2} \leq 1$;
\item  $b\in L_{q}(\M_n)$ with  $\|b\|_{q} \leq 1$, where $1/p=1/2 +1/q $.
\end{enumerate}
Replacing  the factorization  above by $a=by$,  we have the notion of $(p,2)_r$-\emph{crude atoms}.
\end{definition}

We may   consider  another column atomic Hardy space based on  crude atoms as building blocks. That is, for $0<p\le1$, we define  $\h_{p,\cat}^c(\M)$ to be the space of $x \in L_p(\M)$ admitting  a column crude atomic decomposition:
\begin{equation}\label{c-atom}
x= \sum_k \lambda_k a_k \quad \text{(convergence in $L_p(\M)$)},
\end{equation}
where for each $k$, $a_k$ is a $(p,2)_c$-crude atom  or an element of the unit ball of $L_p(\M_1)$, and $\lambda_k \in \mathbb{C}$ satisfying $\sum_k|\lambda_k|^p <\infty$. $\h_{p,\cat}^c(\M)$ is equipped with
\begin{equation*}
\|x\|_{\h_{p,\cat}^c}=\inf\Big(\sum_k |\lambda_k|^p\Big)^{1/p},
\end{equation*}
where the infimum is taken over all  decompositions of $x$ as in \eqref{c-atom}.

 With obvious modifications, we may also define $\h_{p,\cat}^r(\M)$  and $\h_{p,\cat}(\M)$. 

\medskip

 We now introduce a  third  type of atomic decomposition that has been considered in the literature and is central for the  present  paper:

\begin{definition}\label{alg}
 Let $0<p<2$.
 An operator $x \in L_p(\M)$ is called an {\em algebraic $\h_p^c$-atom}, whenever it can be written in the form $x=\sum_{n\geq 1} y_nb_n$, with $a_n$ and $b_n$ satisfying the following condition for $1/p=1/2 +1/q$:
\begin{enumerate}[{\rm (i)}]

\item  $\mathcal{E}_n (y_n) =0$ and $b_n \in L_q(\M_n)$ for all $n\geq 1$;

\item  $\displaystyle{\sum_{n\geq 1} \big\|y_n\big\|_2^2 \leq 1}$ and $\displaystyle{\Big\| \Big( \sum_{n\geq 1} |b_n|^2 \Big)^{1/2}\Big\|_q \leq 1}$.
\end{enumerate}
\end{definition}

The above definition was considered in \cite{Perrin2} for the range $1\leq p<2$ and more recently  this notion was used  in \cite{Hong-Junge-Parcet}  to study maximal functions of noncommutative  martingales.

Naturally, this concept of atoms  leads to the consideration  of another Hardy space:
for $0<p<2$, we say that an operator $x \in L_p(\M)$ admits an algebraic $\h_p^c$-atomic decomposition if 
\[
x =\sum_{k} \lambda_k a_k,
\] 
where for each $k$, $a_k$ is an algebraic $\h_p^c$-atom  or an element of the unit ball of $L_p(\M_1)$, and $\lambda_k \in \mathbb{C}$ satisfying $\sum_k|\lambda_k|^p <\infty$ for $0<p \leq 1$ and 
 $\sum_k|\lambda_k|<\infty$ for $1<p<2$. The corresponding \emph{ algebraic atomic column martingale Hardy space} $\h_{p,\alg}^c(\M)$ is defined to be  the space of all $x$ which admit a algebraic $\h_p^c$-atomic decomposition and is equipped with
\[
\|x\|_{\h_{p,\alg}^c}=\inf\Big(\sum_k |\lambda_k|^p\Big)^{1/p} \quad \text{for $0<p\le1$}
\]
and
\[
\|x\|_{\h_{p,\alg}^c}=\inf\,\sum_k |\lambda_k| \quad \text{for $1< p<2$},
\]
where the infimum are   taken over all  decompositions of $x$ as described above.

\begin{remark} 
 In contrast with the situation of $\h_{p,\at}^{c}(\M)$ and $\h_{p,\cat}^c(\M)$ for $1<p<2$, the above definition of $\h_{p,\alg}^c(\M)$ for $1<p<2$ works out well. The reason behind is the fact that any element of the unit ball of $\h_{p,\alg}^c(\M)$ is already an algebraic $\h_p^c$-atom (see Theorem~\ref{main}).
\end{remark} 

It is clear that  $(p,2)_c$-atoms  are  $(p,2)_c$-crude atoms. Also, $(p,2)_c$-crude atoms are algebraic $\h_p^c$-atoms. Thus for  $0<p \leq  1$, the following inclusions hold:
\[
\h_{p,\at}^c(\M) \subseteq \h_{p,\cat}^c(\M) \subseteq \h_{p,\alg}^c(\M).
\]

\begin{remark}\label{alg-crude}
Every algebraic $\h_p^c$-atom $x$ is a combination of at most countably many $(p,2)_c$-crude atoms satisfying certain convergence properties. More precisely, 
\[
x =\sum_{n} \lambda_n a_n\;\text{ (convergence in }\; L_p(\M)),
\] 
where $a_n$'s are $(p,2)_c$-crude atoms  or elements of the unit ball of $L_p(\M_1)$, and $\lambda_k \in \mathbb{C}$ satisfying 
\[
\sum_n|\lambda_n| \le1\;\text{ for }\;0<p \leq 1\quad\text{and}\quad
\sum_n|\lambda_n|^p \le1\;\text{ for }\;1<p <2.
\]
\end{remark} 

Indeed, let $x$ be an algebraic $\h_p^c$-atom. Then 
 $$x=\sum_{n\geq 1} y_nb_n$$
with $(y_n)$ and $(b_n)$ as in Definition~\ref{alg}. Let
 $$a_n=\frac{y_nb_n}{\|y_n\|_2\|b_n\|_q}\;\text{ and }\; \l_n=\|y_n\|_2\|b_n\|_q.$$
Then  $a_n$'s are $(p,2)_c$-crude atoms and
 $$x =\sum_{n} \lambda_n a_n.$$
 Moreover, for $0<p\le 1$ ($q\le2$), by Lemma~\ref{com-l2} we have
 \[
 \sum_n|\lambda_n| \le \Big(\sum_n\|y_n\|_2^2\Big)^{1/2}\,\Big(\sum_n\|b_n\|_q^2\Big)^{1/2}
 \le\Big\| \Big( \sum_{n\geq 1} |b_n|^2 \Big)^{1/2}\Big\|_q\le 1.
 \]
On the other hand, if $1<p<2$, then $q\ge 2$, so
  \[
  \Big(\sum_n|\lambda_n|^p \Big)^{1/p}\le \Big(\sum_n\|y_n\|_2^2\Big)^{1/2}\,\Big(\sum_n\|b_n\|_q^q\Big)^{1/q}
 \le\Big\| \Big( \sum_{n\geq 1} |b_n|^2 \Big)^{1/2}\Big\|_q\le 1.
 \]

\medskip

Let us now discuss the connections between  the three atomic Hardy spaces described above  and  the Hardy spaces $\h_p^c(\M)$  and $\H_p^c(\M)$ from the previous section. It is easy to verify that if $a$ is a $(p,2)_c$-crude atom then $\|a\|_{\h_p^c} \leq 1$  and $\|a\|_{\H_p^c} \leq 1$ (see \cite[Lemma~4.2]{Hong-Mei} for $p=1$). This property extends to algebraic $\h_p^c$-atoms.
The  inclusion  in the next lemma was proved in \cite[Section~3.6]{Perrin2} for the case $1\leq p <2$. The argument used there carries over to the full range.  Since this is very essential in our discussion, we reproduce  it here for the convenience of the reader.

\begin{lemma}\label{lemma-alg} For $0<p<2$, we have
$
\h_{p,\alg}^c(\M) \subseteq \h_p^c(\M)$ and $\h_{p,\alg}^c(\M) \subseteq \H_p^c(\M)$. More precisely, suppose $x$ is an operator that admits a decomposition $x=x_1 + \sum_{n=1}^\infty a_n b_n$ satisfying:
\begin{enumerate}[{\rm (i)}]
\item $x_1 \in L_p(\M_1)$;
\item  for every $n\geq 1$,  $a_n \in L_2(\M)$, $\E_n(a_n)=0$, and $b_n \in L_q(\M_n)$ where $1/p=1/2 + 1/q$.
\end{enumerate}
Then
\[
\max\Big\{\big\|x\big\|_{\H_p^c}, \big\|x\big\|_{\h_p^c}\Big\} \leq
\Big( \big\|x_1\big\|_p^p + \sum_{n\geq 1} \big\|a_n \big\|_2^2 \Big)^{1/2}  \Big\| \Big(|x_1|^{2-p} + \sum_{n\geq 1} \big|b_n\big|^2 \Big)^{1/2} \Big\|_q.
\]
In particular, if  $x$ is an algebraic $\h_p^c$-atom,  then 
 \[
\big\|x \big\|_{\h_p^c} \leq 1 \ \ \text{and}\ \  \big\|x \big\|_{\H_p^c} \leq 1.
\]
\end{lemma}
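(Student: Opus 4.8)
The plan is to prove the norm estimate directly by computing the conditioned square function $s_c(x)$ and the square function $S_c(x)$ of the martingale $x = x_1 + \sum_{n\ge1} a_n b_n$, and then to invoke Lemma~\ref{com-l2} to bound the relevant $L_q$-type quantity. First I would identify the martingale difference sequence of $x$. Since $x_1 \in L_p(\M_1)$ contributes only to the first difference, and since $\E_n(a_n)=0$ with $b_n \in \M_n$ forces $\E_{n}(a_nb_n) = \E_n(a_n)b_n = 0$ while $a_nb_n \in L_p(\M_n)$... wait, more carefully: the term $a_nb_n$ is $\M_n$-measurable? No — $a_n\in L_2(\M)$ need not lie in $\M_n$. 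The right observation is that $\E_{n}(a_nb_n)=\E_n(a_n)b_n=0$, so the ``new'' information in $a_nb_n$ appears at levels $\ge n+1$; writing $x = x_1 + \sum_n a_nb_n$ and applying $\E_{k}-\E_{k-1}$ gives $dx_k = \E_k(x_1-\E_1 x_1)\cdot[k{=}1] + \sum_{n< k}(\E_k-\E_{k-1})(a_nb_n) = dx_1\,[k{=}1]+\sum_{n<k} d(a_nb_n)_k$. The key structural point is that for fixed $n$, $\E_{k-1}|d(a_nb_n)_k|^2 \le \E_{k-1}\big(b_n^*\,\E_{k-1}(|dy|^2\text{-ish})\,b_n\big)$; concretely, since $b_n$ is $\M_n\subseteq\M_{k-1}$-measurable for $k-1\ge n$, the module property of conditional expectations yields $\E_{k-1}|d(a_nb_n)_k|^2 \le b_n^*\,\E_{k-1}(\text{stuff about }a_n)\,b_n$, and summing over $k$ produces $s_c(a_nb_n)^2 \le b_n^*\,c_n\,b_n$ with $\|c_n\|$ controlled by $\|a_n\|_2^2$.

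Next I would assemble these: using the operator inequality $\big(\sum_n |z_n|^2\big) \le$ (nothing crude here — one genuinely needs a Cauchy–Schwarz/factorization step), I would write $s_c(x)^2 \le |dx_1|^2 + \sum_n s_c(a_nb_n)^2$ (after absorbing cross terms — this is where one must be careful, as $s_c$ of a sum is not the sum of $s_c$'s; the correct route is the standard trick of writing the martingale as a $2\times\infty$ or column-matrix product). The cleanest implementation: view $x - \E_1 x = \sum_n a_n b_n$ as the ``product'' $A\cdot B$ where $B$ is the column $(b_n)_n$ and $A$ encodes the $a_n$'s acting diagonally; then $s_c(x)^2$ is dominated (via the module property applied to each block) by $B^*\,\mathrm{diag}(\E_{\cdot}|a_n|^2)\,B$, and applying $\tr$ with the $\|\cdot\|_p$-quasinorm together with Hölder $1/p = 1/2 + 1/q$ gives $\|x - \E_1 x\|_{\h_p^c}^2 \le \big(\sum_n \|a_n\|_2^2\big)\cdot\big\|\big(\sum_n|b_n|^2\big)^{1/2}\big\|_q^2$. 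Folding in the $x_1$ term with the exponent $2-p$ (so that $|x_1|^{2-p}$ combines with $\|x_1\|_p^p$ to give the right scaling, exactly matching condition (iii) in the definition of a $(p,2)_c$-atom applied to $x_1\in L_p(\M_1)$) produces the stated bound; and since $\H_p^c$ is estimated by an identical computation with $S_c$ in place of $s_c$ (dropping the inner conditional expectations only makes the module-property step more direct), the same inequality holds for $\|x\|_{\H_p^c}$.

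Finally, for the ``in particular'' assertion: if $x$ is an algebraic $\h_p^c$-atom then by Definition~\ref{alg} we may take $x_1 = 0$ (or rather, $x$ has the pure form $\sum_n y_nb_n$), and conditions (i)--(ii) there give $\sum_n\|y_n\|_2^2 \le 1$ and $\big\|(\sum_n|b_n|^2)^{1/2}\big\|_q \le 1$, so the main estimate immediately yields $\max\{\|x\|_{\h_p^c},\|x\|_{\H_p^c}\} \le 1^{1/2}\cdot 1 = 1$. The main obstacle I anticipate is the cross-term control in the square-function computation — making rigorous the passage from $s_c\big(\sum_n a_nb_n\big)^2$ to $\sum_n s_c(a_nb_n)^2$ (or directly to the column-product bound $B^*DB$) without losing constants. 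The honest way to handle this is to avoid splitting the sum at all: treat $\sum_n a_nb_n$ as a single martingale transform / column-matrix product from the outset, compute $s_{c,N}$ for the finite truncation, apply the module property $\E_{k-1}|b_n^* w|^2 = b_n^*\E_{k-1}(|w|^2)b_n$ once (legitimate since $b_n\in\M_n\subseteq\M_{k-1}$ for $k>n$), and only then pass to the trace and use Lemma~\ref{com-l2}; the truncation limit and the density of $L_2$ in $\h_p^c$ close the argument.
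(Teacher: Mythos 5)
Your overall skeleton --- compute $d_k(x)=\sum_{n<k}d_k(a_n)b_n$, use the module property of $\E_{k-1}$ over $\M_n\subseteq\M_{k-1}$, treat the whole sum as a column--matrix product, and finish with H\"older for $1/p=1/2+1/q$ --- matches the paper's strategy. But the one inequality your argument actually rests on is false as stated. You claim $s_c(x)^2$ is dominated by $B^*\,\mathrm{diag}(\E_{\cdot}|a_n|^2)\,B$. What the module property really gives is the exact identity
\[
\E_{k-1}\big(|d_k(x)|^2\big)=\sum_{n,m<k}b_m^*\,\E_{k-1}\big(d_k(a_m)^*d_k(a_n)\big)\,b_n=B^*M_kB,
\]
where $M_k=\big(\E_{k-1}(d_k(a_m)^*d_k(a_n))\big)_{m,n<k}$ is a positive operator matrix that is \emph{not} diagonal, and a positive matrix is not dominated by its diagonal part (the all-ones $2\times 2$ matrix is not $\le I$). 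So the cross terms cannot simply be discarded, and your fallback of ``apply the module property once, then pass to the trace and use Lemma~\ref{com-l2}'' does not close this gap either: Lemma~\ref{com-l2} says nothing about the off-diagonal entries of $\sum_k M_k$.

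The missing ingredient --- which is the actual content of the paper's proof --- is Junge's $L_2$-module identification: for each $j$ there is a right $\M_j$-module map $u_j:L_2(\M)\to L_2(\M_j\overline{\otimes}B(\ell_2))$ onto a column subspace with $u_j(y)^*u_j(z)=\E_j(y^*z)\otimes e_{1,1}$. This converts $\E_{k-1}(|d_k(x)|^2)$ into the \emph{exact} square $\big|\sum_{n<k}u_{k-1}(d_k(a_n))\cdot(b_n\otimes e_{1,1})\big|^2$ of a single column, so that $s_c(x)$ itself becomes $|AC|$ for the lower-triangular matrix $A=(u_{k-1}(d_k(a_n)))_{1\le n<k}$ (with $|x_1|^{p/2}$ in the $(1,1)$ slot) and the column $C$ built from $|x_1|^{1-p/2}$ and the $b_n$'s. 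H\"older then gives $\|\,|AC|\,\|_p\le\|A\|_2\,\|C\|_q$ with the cross terms accounted for exactly rather than dropped, and $\|A\|_2^2=\|x_1\|_p^p+\sum_n\|a_n\|_2^2$ because $\sum_k\|d_k(a_n)\|_2^2=\|a_n\|_2^2$. Your handling of the $x_1$ term (splitting $|x_1|=|x_1|^{p/2}\,|x_1|^{1-p/2}$) and of the ``in particular'' assertion is fine; the genuine gap is the mechanism that turns the non-diagonal quadratic form into something H\"older can see.
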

\begin{proof}
We provide the proof for the  $\h_p^c$-norm. The adjustment to  the $\H_p^c$-norm is straightforward.
Let $x= x_1+ \sum_{n\geq 1} a_nb_n$  with
$\E_n(a_n)=0$ and $b_n \in L_q(\M_n)$ for all $n\geq 1$.  We may assume by approximation that the $a_n$'s and $b_n$'s are bounded operators. Let $d_k=\E_k-\E_{k-1}$. We observe first that  $d_1(x)=x_1$ and for $k\geq 2$,
\[
d_k(x)=\sum_{n<k} d_k(a_n)b_n.
\]
It can be easily seen from the $a_n$'s  and $b_n$'s that
\begin{align*}
d_k(x) &=\sum_{n<k} \E_k(a_nb_n) -\sum_{n<k-1}\E_{k-1}(a_nb_n)\\
&= \sum_{n<k-1} d_k(a_n)b_n + \E_k(a_{k-1})b_{k-1}.
\end{align*}
 Since $\E_{k-1}(a_{k-1})=0$, the extra term is equal to $d_k(a_{k-1})b_{k-1}$. Thus, from the above form of $d_k(x)$, we deduce that   for $k\geq 2$,
\[
\E_{k-1}(|d_k(x)|^2) =\sum_{n,m <k}b_m^* \E_{k-1}\big( d_k(a_m)^* d_k(a_n)\big) b_n.
\]
The key part of the argument is  Junge's identification \cite{Ju} which states that for every $j\geq 1$,  there exists an isomorphic  right $\M_j$-module map $u_j: L_2(\M) \to L_2(\M_j \overline{\otimes}  B(\ell_2(\mathbb{N})))$ whose range is a closed subspace consisting of column vectors and satisfying the property that for  $y, z \in L_2(\M)$
\[
u_j(y)^*u_j(z)=\E_j(y^*z) \otimes e_{1,1}
\]
where $(e_{l,i})_{l,i\geq 1}$ denotes the unit matrices in $B(\ell_2(\mathbb{N}))$. It then follows that
\[
\E_{k-1}\big( d_k(a_m)^* d_k(a_n)\big) \otimes e_{1,1} =u_{k-1}(d_k(a_m))^* \cdot \ u_{k-1}(d_k(a_n)),
\]
where for  given $y$,  $u_{k-1}(y)$ is a column vector. Therefore,  
\[
\E_{k-1}(|d_k(x)|^2) \otimes e_{1,1}= \big| \sum_{n<k} u_{k-1} ( d_k(a_n)) \cdot (b_n \otimes e_{1,1}) \big|^2
\]
as operators affiliated with  $\M \overline{\otimes} B(\ell_2)$.

Let $\a_{1,1}:= |x_1|^{p/2}\otimes e_{1,1}$ and for $k\geq 2$, we set
   $\alpha_{k,n}  :=u_{k-1} ( d_k(a_n))$  when $1\leq n<k$.
Denote by  $A$  the lower triangular matrix $(\alpha_{k,n} )_{1\leq n<k}$ that takes  its  values in $L_2(\M \overline{\otimes} B(\ell_2))$.
   Multiplying $A$ from the right by the column
matrix $C= |x_1|^{1-p/2}\otimes e_{1,1}  \otimes e_{1,1}+ \sum_{n \geq 1} b_n \otimes  e_{1,1} \otimes e_{n+1,1}$, we get the column matrix:
\[
B=|x_1| \otimes e_{1,1}  \otimes e_{1,1}+\sum_{k\geq 2}\big[ \sum_{1\leq n<k} u_{k-1}(d_n(a_n))\cdot(b_n \otimes e_{1,1})\big] \otimes e_{k,1}.
\]
This shows  that
\begin{align*}
s_c^2(x) \otimes e_{1,1} \otimes e_{1,1}&= |x_1|^2 \otimes e_{1,1} \otimes e_{1,1} + \sum_{k\geq 2}  \big| \sum_{1\leq n<k} u_{k-1} ( d_k(a_n))\cdot (b_n \otimes e_{1,1}) \big|^2 \otimes e_{1,1}=B^*B\\
&=\Big| (\alpha_{k,n})_{1\leq n<k} \cdot \big( |x_1|^{1-p/2} \otimes e_{1,1} \otimes e_{1,1}+\sum_{j\geq 1}  b_j  \otimes e_{1,1} \otimes e_{j+1,1}\big)\Big|^2.
\end{align*}
That is, the conditioned square function of $x$ takes the following form:
 \[
s_c(x)\otimes e_{1,1} \otimes e_{1,1}=\big| (\alpha_{k,n})_{1\leq n<k}\cdot \big( |x_1|^{1-p/2} \otimes e_{1,1} \otimes e_{1,1}+\sum_{j\geq 1}  b_j  \otimes e_{1,1} \otimes e_{j+1,1}\big)\big|.
\]
 By H\"older's inequality, 
\begin{align*}
\big\|x\big\|_{\h_p^c} &\leq  \big\| (\alpha_{k,n})_{1\leq n<k}\big\|_{L_2(\M \overline{\otimes} B(\ell_2(\mathbb{N}^2))}  \big\| |x_1|^{1-p/2} \otimes e_{1,1} +\sum_{n\geq 1} b_n \otimes e_{n+1,1} \big\|_{L_q(\M\overline{\otimes} B(\ell_2(\mathbb{N})))}\\
&= \Big( \big\|\a_{1,1}\|_2^2 + \sum_{k\geq 2} \sum_{1\leq n<k} \big\| \alpha_{k,n}\big\|_{L_2(\M \overline{\otimes} B(\ell_2(\mathbb{N})))}^2 \Big)^{1/2}\,\Big\| \Big( |x_1|^{2-p}+\sum_{n\geq 1} |b_n|^2 \Big)^{1/2}\Big\|_q.
\end{align*}
Recall that for $k\geq 2$, $|\alpha_{k,n}|^2=\E_{k-1}(|d_k(a_n)|^2) \otimes e_{1,1}$. Taking into account that  $d_k(a_n)=0$ for $n\geq k$, we then deduce that
\begin{align*}
\Big( \big\|\a_{1,1}\big\|_2^2+\sum_{k\geq 2} \sum_{1\leq n<k} \big\| \alpha_{k,n}\big\|_2^2 \Big)^{1/2}
&=\Big(\big\|x_1\big\|_p^p +  \sum_{k\geq 2} \sum_{n\geq 1} \|d_k(a_n)\|_2^2 \Big)^{1/2}\\
&=\Big(\big\|x_1\big\|_p^p + \sum_{n\geq 1} \sum_{k\geq 2} \|d_k(a_n)\|_2^2 \Big)^{1/2}\\
&=\Big( \big\|x_1\big\|_p^p+\sum_{n\geq 1} \|a_n\|_2^2 \Big)^{1/2}.
\end{align*}
We have thus proved the desired estimate for $\|x\|_{\h_p^c}$.
\end{proof}

With the preceding  lemma, we can complete the series of  continuous inclusions which are valid for the full range $0<p<2$:
\begin{equation}\label{inclusion1}
\h_{p,\at}^c(\M) \subseteq \h_{p,\cat}^c(\M) \subseteq \h_{p,\alg}^c(\M) \subseteq \h_p^c(\M) \quad \text{for}\  0<p\leq 1
\end{equation}
and
\begin{equation}\label{inclusion2}
 \h_{p,\alg}^c(\M) \subseteq \h_p^c(\M) \quad \text{for}\  1<p <2.
\end{equation}

The general atomic decomposition problem for noncommutative martingales can be thought of as   determining  if these various  martingale Hardy spaces in the  respective inclusions  in   \eqref{inclusion1} and \eqref{inclusion2}  coincide. Our first result asserts that the reverse to the  first inclusion  in \eqref{inclusion1}  always holds. More precisely, we have:

\begin{proposition}\label{crude} 
Let $0<p <2$.
Then every $(p,2)_c$-crude atom $a$ can be decomposed into $(p,2)_c$-atoms: for any given $\b>1$ $a$ can be represented as
 \[
 a= \sum_{k=1}^\infty \lambda_k a_k,
 \]
 where the $a_k$'s are $(p,2)_c$-atoms and the $ \lambda_k$'s satisfy 
 \[
 \big(\sum_{k=1}^\infty\|\l_k|^p\big)^{1/p}\le\b.
 \]
 Consequently, $\h_{p,\at}^{c}(\M)= \h_{p,\cat}^{c}(\M)$ isometrically for $0<p\le1$.
\end{proposition}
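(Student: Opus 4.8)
The plan is to split the factor $b$ of a $(p,2)_c$-crude atom $a=yb$ along the dyadic spectral levels of $|b|$ and to rescale each piece to a genuine $(p,2)_c$-atom; the equality of the two atomic spaces then follows by a routine re-decomposition argument.

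Fix $\b>1$ and a crude atom $a=yb$ with data $n,y,b$ as in the definition (so $\E_n(y)=0$, $\|y\|_2\le1$, $b\in L_q(\M_n)$, $\|b\|_q\le1$, $1/p=1/2+1/q$). Let $b=v|b|$ be the polar decomposition, $v\in\M_n$, and set $g_k:=\ch_{(\b^k,\b^{k+1}]}(|b|)\in\M_n$ for $k\in\Z$, so that $\sum_k g_k=r(b)$ and each $bg_k=v|b|g_k$ is bounded with $\|bg_k\|_\8\le\b^{k+1}$. First I would check that, whenever $ybg_k\ne0$, the operator $a_k:=\l_k^{-1}ybg_k$ with $\l_k:=\|ybg_k\|_2\,\T(g_k)^{1/q}$ is a $(p,2)_c$-atom associated with the projection $g_k$: one has $r(ybg_k)\le g_k$; since $bg_k\in\M_n$ we get $\E_n(ybg_k)=\E_n(y)\,bg_k=0$; and $\|a_k\|_2=\T(g_k)^{-1/q}=\T(g_k)^{1/2-1/p}$ by the choice of $\l_k$. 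Telescoping $\sum_k ybg_k=yb\,r(b)=a$, the series $\sum_k\l_k a_k$ converges to $a$ in $L_p(\M)$: its partial sums are $yb\,\ch_{(\b^{-N},\b^{M+1}]}(|b|)$, and the tails are handled via $\|yb\,\ch_B(|b|)\|_p\le\|y\|_2\,\|b\,\ch_B(|b|)\|_q$ together with dominated convergence for $b\in L_q$. Dropping the null terms and relabelling gives $a=\sum_{k\ge1}\l_k a_k$ of the desired form.

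The crux is the estimate $\big(\sum_k\l_k^p\big)^{1/p}\le\b$, and the key point is that the obvious bound $\|ybg_k\|_2\le\|y\|_2\|bg_k\|_\8\le\b^{k+1}$ is far too wasteful — it produces a divergent sum. Instead one must localise. Since $bg_kb^*=v|b|^2g_kv^*\le\b^{2(k+1)}vg_kv^*$, writing $P:=v^*y^*yv\in L_1(\M)_+$ yields
\[
\|ybg_k\|_2^2=\T(y^*y\,bg_kb^*)\le\b^{2(k+1)}\,\T(Pg_k),\qquad \sum_k\T(Pg_k)=\T(P\,r(b))=\|yv\|_2^2\le1.
\]
On the other hand, the spectral inequality $\b^{kq}g_k\le|b|^qg_k$ gives $\T(g_k)\le\b^{-kq}\T(|b|^qg_k)$, with $\sum_k\T(|b|^qg_k)=\|b\|_q^q\le1$. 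Hence $\l_k\le\b\,\T(Pg_k)^{1/2}\T(|b|^qg_k)^{1/q}$, and since $\tfrac p2+\tfrac pq=p(\tfrac12+\tfrac1q)=1$, Hölder's inequality for the conjugate exponents $2/p$ and $q/p$ gives
\[
\sum_k\l_k^p\le\b^p\Big(\sum_k\T(Pg_k)\Big)^{p/2}\Big(\sum_k\T(|b|^qg_k)\Big)^{p/q}\le\b^p.
\]
This is the step I expect to be the main obstacle: the whole argument hinges on replacing the size bound for $\|ybg_k\|_2$ by the localised bound involving $\T(Pg_k)$, and on recognising that, once paired with the Chebyshev-type control of $\T(g_k)$, the two resulting sequences are summable and the exponents $2/p,q/p$ are conjugate.

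For the concluding isometric statement: every $(p,2)_c$-atom $a$ with projection $e\in\M_n$ is a $(p,2)_c$-crude atom via $a=y'b'$ with $b'=\T(e)^{-1/q}e$ and $y'=\T(e)^{1/q}a$ (using $ae=a$ and $\E_n(a)=0$), so the inclusion $\h_{p,\at}^c(\M)\subseteq\h_{p,\cat}^c(\M)$ is contractive. Conversely, given $x\in\h_{p,\cat}^c(\M)$ with a crude atomic decomposition $x=\sum_k\mu_k c_k$, $\big(\sum_k|\mu_k|^p\big)^{1/p}<\|x\|_{\h_{p,\cat}^c}+\e$, I would apply the decomposition above to each crude atom $c_k$ (leaving the $L_p(\M_1)$-terms unchanged), obtaining $x=\sum_{k,j}\mu_k\l_{k,j}a_{k,j}$ with $\big(\sum_j|\l_{k,j}|^p\big)^{1/p}\le\b$ for each $k$; since every $(p,2)_c$-atom satisfies $\|a_{k,j}\|_p\le\|a_{k,j}\|_2\,\T(e_{k,j})^{1/q}\le1$, the double series converges absolutely in $L_p(\M)$ and equals $x$. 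Hence $\|x\|_{\h_{p,\at}^c}^p\le\b^p(\|x\|_{\h_{p,\cat}^c}^p+\e)$; letting $\e\downarrow0$, $\b\downarrow1$, and combining with the contractive inclusion gives $\|x\|_{\h_{p,\at}^c}=\|x\|_{\h_{p,\cat}^c}$, i.e. the asserted isometry.
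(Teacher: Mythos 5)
Your proof is correct and follows essentially the same route as the paper's: a dyadic spectral splitting $e_k=\ch_{[\b^k,\b^{k+1})}(|b|)$ of the $\M_n$-factor, the localized bound $\|ybe_k\|_2^2\le\b^{2(k+1)}\|yve_k\|_2^2$ in place of the wasteful uniform one, the Chebyshev-type bound $\b^{qk}\T(e_k)\le\T(|b|^qe_k)$, and H\"older with the conjugate exponents $2/p$ and $q/p$. The only cosmetic differences are that the paper reduces to $b\ge0$ at the outset rather than carrying the partial isometry $v$ through, and that you spell out the convergence of the series and the final isometry claim in more detail than the paper does.
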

\begin{proof}
 Let $a=yb$ with $\|y\|_2\leq 1$, $\E_n(y)=0$, and $b\in L_q(\M_n)$ with $\|b\|_q\leq 1$, where $1/q=1/p-1/2 $.
We may assume  (by considering polar decomposition) that $b\geq 0$. Fix $\b>1$ and
consider the sequence of mutually disjoint projections in $\M_n$ defined by
\[
 e_k=\chi_{[\b^k, \b^{k+1})}(b) \quad \text{for}\ k\in \Z.
\]
We write
 \begin{equation}\label{decomp-a}
 a= \sum_{k=-\infty}^\infty \lambda_k a_k,
 \end{equation}
where for  every $k \in \Z$, we define
\[
a_k= \frac{\T(e_k)^{-1/q}}{ \|ybe_k\|_{2}}ybe_k\  \quad  \text{and}\ \lambda_k =\|ybe_k\|_{2}\ . \  \T(e_k)^{1/q}.
\]
Thus,  each  $a_k$ is clearly a $(p,2)_c$-atom and 
the above series converges in $L_p(\M)$.
We claim that $\sum_{k\in \Z} |\lambda_k|^p \leq \b^p$.
First, by H\"older's inequality, we have
\[
\Big(\sum_{k\in \Z} |\lambda_k |^p\Big)^{1/p} \leq \Big( \sum_{k\in \Z} \b^{qk} \T(e_k) \Big)^{1/q}.
\Big( \sum_{k\in \Z} \b^{-2k} \|ybe_k\|_{2}^2\Big)^{1/2}.
\]
Next,  since $e_k$ commutes with $b$, we have the
 following simple estimate:
 \begin{align*}
 \big\|ybe_k\big\|_{2}^2
 &=\big\|y (e_kbe_k)\|_2^2\\
 &\leq \b^{2(k+1)} \big\| ye_k\big\|_2^2.
 \end{align*}
Thus,  we  deduce that
 \begin{align*}
 \Big(\sum_{k\in \Z} |\lambda_k|^p \Big)^{1/p} &\leq \Big( \sum_{k\in \Z} \b^{qk} \T(e_k) \Big)^{1/q}.
 \Big(\b^2\sum_{k\in \Z}\big\| ye_k\big\|_{2}^{2}\Big)^{1/2} \\
 &\leq \b\|b\|_q \big\|y\big\|_{2} \leq \b.
\end{align*}
For the case $0<p\leq 1$, the above assertion clearly implies  that $\h_{p,\at}^c(\M)$ and $\h_{p,\cat}^p(\M)$ are isometric. 
\end{proof}


The  next theorem is the main result  of this section. It  shows that the martingale  Hardy space $\h_p^c(\M)$ admits atomic decomposition when algebraic atoms are used.
It extends \cite[Theorem~3.6.13]{Perrin2} to the full range $0<p<2$.

\begin{theorem}\label{main} 
 Let $0<p<2$. Then  \[\h_{p,\alg}^c(\M)= \h_p^c(\M) \quad \text{with equivalent (quasi) norms.}\]
 More precisely,  if
 $x \in \h_p^{c}(\M)$,  then $x$  admits a unique decomposition  $x=x_1 + y$ where $x_1\in L_p(\M_1)$ and $y$ is a scalar  multiple of an algebraic $\h_p^c$-atom. Moreover, if $\lambda$ is the scalar such that $\lambda^{-1}y$ is an algebraic $\h_p^c$-atom, then 
 \begin{equation}\label{l1-estimate}
 \big\|x_1\big\|_p + |\lambda| \leq \sqrt{2/p}\,\big\|x \big\|_{\h_p^c}.
 \end{equation}
Consequently, we have
\[
\big\|x \big\|_{\h_p^c} \leq \big\|x \big\|_{\h_{p,\alg}^c} \leq\max\big\{1, 2^{(1-p)/p} \big\} \sqrt{2/p}\, \big\|x \big\|_{\h_p^c}.
\]
\end{theorem}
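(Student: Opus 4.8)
The inclusion $\h_{p,\alg}^c(\M)\subseteq\h_p^c(\M)$ with $\|x\|_{\h_p^c}\le\|x\|_{\h_{p,\alg}^c}$ is already in hand: it is the content of Lemma~\ref{lemma-alg} together with the quasi-triangle inequality for the $\h_p^c$-quasinorm (when $0<p<1$ one picks up the constant $2^{(1-p)/p}$ from the $p$-triangle inequality applied to the atomic sum; when $p\ge1$ the constant is $1$). So the real work is to prove the reverse inclusion in the sharp quantitative form \eqref{l1-estimate}: given $x\in\h_p^c(\M)$, produce the decomposition $x=x_1+y$ with $x_1\in L_p(\M_1)$, $y=\lambda\cdot(\text{algebraic }\h_p^c\text{-atom})$, and $\|x_1\|_p+|\lambda|\le\sqrt{2/p}\,\|x\|_{\h_p^c}$. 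By the direct sum \eqref{direct-sum}, and density of $L_2(\M)$, it suffices to treat $x\in L_2^0(\M)$ (so $x_1=\E_1(x)$ is split off first and handled separately, contributing $\|x_1\|_p$), and then to construct $y=x-x_1$ as a scalar multiple of an algebraic atom with $|\lambda|\le\sqrt{2/p}\,\|x\|_{\h_p^c}$ after we have accounted for $x_1$; more precisely one wants the combined bound, so the construction below should be run on the full $x$ with the $|x_1|$-terms carried along exactly as in Lemma~\ref{lemma-alg}.

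Here is the construction. Set $a:=s_c(x)$, the column conditioned square function; since $x\in\h_p^c(\M)$ we have $a\in L_p(\M)$, and $\|a\|_p=\|x\|_{\h_p^c}$. Approximating, assume $a$ is invertible with bounded inverse (replace $a^2$ by $a^2+\e\mathbf 1$ and let $\e\downarrow0$ at the end). Define the running conditioned square functions $a_n:=s_{c,n}(x)=\big(\sum_{k\le n}\E_{k-1}|dx_k|^2\big)^{1/2}$, so $0\le a_1\le a_2\le\cdots\le a$ and $a_n^2-a_{n-1}^2=\E_{n-1}|dx_n|^2\in L_{p/2}(\M_{n-1})$. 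The idea, following Perrin, is to factor each martingale difference $dx_n$ through the "mass" it carries. Put
\[
b_n:=a^{(p-2)/2}\,\big(a_n^2-a_{n-1}^2\big)^{1/2}\in L_q(\M_{n-1})\subseteq L_q(\M_n),\qquad \tfrac1p=\tfrac12+\tfrac1q,
\]
and define $y_n$ so that $y_nb_n=dx_n$; concretely, writing $dx_n=(dx_n)(a_n^2-a_{n-1}^2)^{-1/2}\cdot(a_n^2-a_{n-1}^2)^{1/2}$ and inserting $a^{(2-p)/2}a^{(p-2)/2}$, set $y_n:=(dx_n)(a_n^2-a_{n-1}^2)^{-1/2}a^{(2-p)/2}$ (interpreted via support projections where the inverse degenerates). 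Then $\E_n(y_nb_n)=\E_n(dx_n)=0$, and since $b_n\in\M_{n}$ one checks $\E_n(y_n)=0$ as required by Definition~\ref{alg}(i). It remains to verify the two normalizations in Definition~\ref{alg}(ii): first, $\big\|\big(\sum_n|b_n|^2\big)^{1/2}\big\|_q^2=\big\|\sum_n b_n^*b_n\big\|_{q/2}=\big\|a^{(p-2)/2}\big(\sum_n(a_n^2-a_{n-1}^2)\big)a^{(p-2)/2}\big\|_{q/2}=\|a^{p-2}\cdot a^2\|_{q/2}=\|a^p\|_{q/2}=\|a\|_p^{\,2p/q}\le\|x\|_{\h_p^c}^{2p/q}$, which after rescaling by $\|x\|_{\h_p^c}$ is $\le1$; second, and this is where Lemma~\ref{mart-2} enters, $\sum_n\|y_n\|_2^2=\sum_n\T\big(a^{(2-p)/2}(a_n^2-a_{n-1}^2)^{-1/2}|dx_n|^2(a_n^2-a_{n-1}^2)^{-1/2}a^{(2-p)/2}\big)$; using that $dx_n$ and $\E_{n-1}|dx_n|^2$ interact through a Kadison–Schwarz/Junge-module identity to replace $|dx_n|^2$ by $a_n^2-a_{n-1}^2$ under the trace against $a^{2-p}$, this collapses to $\sum_n\T\big(a^{2-p}(a_n^2-a_{n-1}^2)\big)=\T\big(a^{2-p}(a^2-a_1^2)\big)+\T(a^{2-p}a_1^2)$, and then Lemma~\ref{mart-2} (with $b=a_1$, or telescoping over $n$) bounds $\T(a^{-2+p}(a^2-b^2))\le\frac2p\T(a^p-b^p)\le\frac2p\|a\|_p^p$. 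After rescaling, this gives $\sum_n\|y_n\|_2^2\le\frac2p$, so $\lambda=\sqrt{2/p}\,\|x\|_{\h_p^c}$ works, and the $|x_1|$ contribution is absorbed exactly as the $x_1$-terms in Lemma~\ref{lemma-alg}.

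Uniqueness of the decomposition $x=x_1+y$ follows from \eqref{direct-sum}: $x_1$ must be $\E_1(x)$, hence $y=x-\E_1(x)$ is determined, and $\lambda$ is then $(\sum_n\|y_n\|_2^2)^{1/2}\cdot\big\|(\sum_n|b_n|^2)^{1/2}\big\|_q$ read off the canonical factorization — or more invariantly, $\lambda$ is forced by requiring $\lambda^{-1}y$ to be an algebraic atom with the canonical (saturating) choice of factors. The final two-sided estimate then follows by combining \eqref{l1-estimate} with the definition of $\|\cdot\|_{\h_{p,\alg}^c}$: on one side, the decomposition just built exhibits $x$ as $\lambda$ times an atom plus $\|x_1\|_p$ times a unit-ball element of $L_p(\M_1)$, so $\|x\|_{\h_{p,\alg}^c}\le\big(|\lambda|^p+\|x_1\|_p^p\big)^{1/p}\le\max\{1,2^{(1-p)/p}\}\big(|\lambda|+\|x_1\|_p\big)\le\max\{1,2^{(1-p)/p}\}\sqrt{2/p}\,\|x\|_{\h_p^c}$; on the other, $\|x\|_{\h_p^c}\le\|x\|_{\h_{p,\alg}^c}$ from \eqref{inclusion1}–\eqref{inclusion2}. \textbf{The main obstacle} I anticipate is the second normalization: making rigorous the passage from $\sum_n\|y_n\|_2^2$ (which literally involves $|dx_n|^2$ conjugated by inverse square roots) to the clean telescoping sum $\sum_n\T\big(a^{2-p}(a_n^2-a_{n-1}^2)\big)$, which requires carefully handling the non-commutation of $dx_n$ with the $\M_{n-1}$-measurable weights and the degeneracy of $(a_n^2-a_{n-1}^2)^{-1/2}$ on its kernel — this is exactly the point where Junge's module embedding (as used in Lemma~\ref{lemma-alg}) and Lemma~\ref{mart-2} must be deployed in tandem, and where the sharp constant $\sqrt{2/p}$ is won or lost.
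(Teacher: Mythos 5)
Your overall strategy (factor each block of $x$ through a negative power of the conditioned square function weighted by its increments, then invoke Lemma~\ref{mart-2} for the $L_2$-side and a telescoping identity for the $L_q$-side) is the right one, but the construction as written has a fatal adaptedness defect. You take $a=s_c(x)$, the \emph{terminal} square function, and set $b_n=a^{(p-2)/2}(a_n^2-a_{n-1}^2)^{1/2}$. Since $s_c(x)$ is not $\M_n$-measurable, $b_n\notin L_q(\M_n)$, so Definition~\ref{alg}(i) fails outright; for the same reason $\E_n(y_n)=dx_n(a_n^2-a_{n-1}^2)^{-1/2}\E_n(a^{(2-p)/2})\neq 0$ in general (and there is also an indexing slip: $\E_n(dx_n)=dx_n$, not $0$ --- the block containing $dx_n$ can at best be killed by $\E_{n-1}$). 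The norm computations then break as well: $|b_n|^2=(a_n^2-a_{n-1}^2)^{1/2}a^{p-2}(a_n^2-a_{n-1}^2)^{1/2}$, and these do \emph{not} recombine into $a^{(p-2)/2}\bigl(\sum_n(a_n^2-a_{n-1}^2)\bigr)a^{(p-2)/2}=a^p$ because the two factors do not commute; only the trace of the sum telescopes, which controls an $L_1$-norm, not the needed $L_{q/2}$-norm. Likewise the "collapse" of $\sum_n\|y_n\|_2^2$ requires replacing $|dx_n|^2$ by $\E_{n-1}|dx_n|^2$ under the trace, which is only legitimate when the surrounding weights are $\M_{n-1}$-measurable --- and even then $(a_n^2-a_{n-1}^2)^{-1/2}\,\E_{n-1}|dx_n|^2\,(a_n^2-a_{n-1}^2)^{-1/2}$ is a support projection, not $a_n^2-a_{n-1}^2$, so the claimed identity $\sum_n\|y_n\|_2^2=\sum_n\T(a^{2-p}(a_n^2-a_{n-1}^2))$ is false. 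This is precisely the obstacle you flag at the end, and it is not a technicality to be smoothed over: it is where the construction must be changed.

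The paper's fix is a regrouping you are missing. One uses the \emph{running, predictable} square functions $s_n=s_{c,n}(x)\in\M_{n-1}$ and writes $x=\sum_{n\ge1}dx_n\,s_n^{-2+p}\,s_n^{2-p}$, then telescopes $s_n^{2-p}=\sum_{1\le j\le n}(s_j^{2-p}-s_{j-1}^{2-p})$ and interchanges the order of summation. The $l$-th block becomes $\a_l\b_l$ with $\b_l=(s_{l+1}^{2-p}-s_l^{2-p})^{1/2}\in\M_l$ and $\a_l=\sum_{n\ge l+1}dx_n\,s_n^{-2+p}\,\b_l$, so $\E_l(\a_l)=0$ automatically, $\sum_l|\b_l|^2=s_c(x)^{2-p}$ telescopes \emph{exactly} (the $\b_l$ are positive and no conjugation by a non-adapted weight is needed), and the martingale orthogonality gives $\sum_l\|\a_l\|_2^2=\sum_{n\ge2}\T\bigl(s_n^{-2+p}(s_n^2-s_{n-1}^2)\bigr)\le\frac2p\bigl(\|x\|_{\h_p^c}^p-\|x_1\|_p^p\bigr)$ by Lemma~\ref{mart-2}. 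Note also that your route would only yield $\|x_1\|_p+|\l|\le\|x_1\|_p+\sqrt{2/p}\,\|x\|_{\h_p^c}$; to get the combined bound \eqref{l1-estimate} one needs the subtracted term $\|x_1\|_p^p$ above together with the elementary maximization of $t\mapsto t+\sqrt{2/p}\,(b^2-t^2)^{1/2}$ on $[0,b]$. Your treatment of the easy inclusion, of uniqueness via \eqref{direct-sum}, and of the passage from \eqref{l1-estimate} to the two-sided norm equivalence is fine.
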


\begin{proof} First, we note  from the definition and Lemma~\ref{lemma-alg} that every algebraic $\h_p^c$-atom belongs to  $\h_p^{0,c}(\M)$. Since we have the direct sum $\h_p^c(\M)= L_p(\M_1) \oplus \h_p^{0,c}(\M)$,  it follows that if such decomposition exists, then  it is unique.

We only    need to prove the inclusion  $\h_{p}^{c}(\M) \subseteq \h_{p,\alg}^{c}(\M)$ as the reverse inclusion  is exactly  Lemma~\ref{lemma-alg}.
 It suffices to verify this for finite martingales in $\M$.
Fix a finite martingale $x=(x_n)_{n\geq 1}$ in $ \M$.  By approximation, we  assume that each of the $s_{c,n}(x)$'s ($n\geq 1$) is  invertible with bounded inverse.  We will denote $s_{c,n}(x)$ simply by $s_n$.  We now describe a concrete decomposition of $x$.  We begin by writing:
\[
x =  \sum_{n\geq 1} dx_ns_n^{-2 +p}s_n^{2-p}.
\]
Taking  $s_0=0$, we have
\begin{align*}
x&= \sum_{n\geq 1} dx_n s_n^{-2+p}\big[ \sum_{1\leq j\leq n} (s_j^{2-p}-s_{j-1}^{2-p})\big]\\
&=  \sum_{j\geq 1} \sum_{n\geq j} dx_n s_n^{-2 +p}(s_j^{2-p}-s_{j-1}^{2-p})\\
&=\sum_{n\geq 1} dx_ns_n^{-2+p} s_1^{2-p} +
\sum_{n\geq 2} dx_ns_n^{-2+p} (s_2^{2-p}- s_1^{2-p}) +
 \sum_{j\geq 3} \sum_{n\geq j} dx_n s_n^{-2 +p}(s_j^{2-p}-s_{j-1}^{2-p})\\
 &=x_1 + \big(\sum_{n\geq 2} dx_ns_n^{-2+p} \big)s_2^{2-p} + \sum_{j\geq 3} \sum_{n\geq j} dx_n s_n^{-2 +p}(s_j^{2-p}-s_{j-1}^{2-p})\\
 &=x_1 +y.
\end{align*}
Clearly, $x_1 \in L_p(\M_1)$ and we claim that $y$ is a scalar multiple of an algebraic $\h_p^c$-atom.  To verify this claim, we consider the following sequences of operators:

\begin{equation}\label{spliting}
\begin{cases}
\a_1&:=\displaystyle{\sum_{n\geq 2} dx_n s_n^{-2+p} s_2^{1-(p/2)}};\\
\a_l &:=\displaystyle{ \sum_{n\geq l+1} dx_n s_n^{-2+p}(s_{l+1}^{2-p}-s_{l}^{2-p})^{1/2} \quad  \text{for $l\geq 2$}};\\
\b_1 &:= s_2^{1-(p/2)};\\
\b_l &:=\displaystyle{(s_{l+1}^{2-p}-s_{l}^{2-p})^{1/2} \quad \text{for $l\geq 2$}}.
\end{cases}
\end{equation}
Then
\[
y= \sum_{l\geq 1} \a_l\b_l.
\]
 We begin by observing that
since $(s_n)_{n\geq 1}$ is a predictable sequence, we have for every $l\geq 1$,
$\E_l(\a_l)=0$. Also, for every $l\geq 1$,  $\b_l \in L_q(\M_l)$ where $1/p=1/2 +1/q$. Moreover, we have the following estimates on the $L_2$-norms of the  sequence $(\a_l)_{l\geq 1}$:
\begin{align*}
\sum_{l\geq 1} \big\|\a_l\big\|_2^2 &=\Big\|\sum_{n\geq 2} dx_n s_n^{-2+p} s_2^{1-(p/2)}\Big\|_2^2 + \sum_{l\geq 2} \Big\| \sum_{n\geq l+1} dx_n s_n^{-2+p}(s_{l+1}^{2-p} -s_l^{2-p})^{1/2} \Big\|_2^2\\
 &=\sum_{n\geq 2} \big\|dx_n s_n^{-2+p} s_2^{1-(p/2)}\big\|_2^2 + \sum_{l\geq 2}  \sum_{n\geq l+1} \big\|dx_n s_n^{-2+p}(s_{l+1}^{2-p} -s_l^{2-p})^{1/2} \big\|_2^2\\
&=\sum_{n\geq 2} \T\big( dx_n s_n^{-2+p} s_2^{2-p}s_n^{-2 +p} dx_n^*\big) +
\sum_{l\geq 2}  \sum_{n\geq l+1} \T\big(dx_n s_n^{-2+p} (s_{l+1}^{2-p}-s_l^{2-p})s_n^{-2 +p} dx_n^*\big).
\end{align*}
Interchanging the summations on the second quantity,
\begin{align*}
\sum_{l\geq 1} \big\|\a_l\big\|_2^2&=\sum_{n\geq 2} \T\big( dx_n s_n^{-2+p} s_2^{2-p}s_n^{-2 +p} dx_n^*\big) +
\sum_{n\geq 3}   \T\big(dx_n s_n^{-2+p}\big[ \sum_{l=2}^{n-1}(s_{l+1}^{2-p}-s_l^{2-p})\big]s_n^{-2 +p} dx_n^*\big)\\
&=\sum_{n\geq 2} \T\big( dx_n s_n^{-2+p} s_2^{2-p}s_n^{-2 +p} dx_n^*\big) +
\sum_{n\geq 3}   \T\big(dx_n s_n^{-2+p}(s_{n}^{2-p}-s_2^{2-p})s_n^{-2 +p} dx_n^*\big)\\
&=\T\big( dx_2 s_2^{-2+p} dx_2^*\big) +\sum_{n\geq 3}   \T\big(dx_n s_n^{-2+p} dx_n^*\big)\\
&=\sum_{n\geq 2} \T\big( s_n^{-2 +p}(s_n^2 -s_{n-1}^2) \big).
\end{align*}
According to Lemma~\ref{mart-2}, this leads to the estimate
\begin{equation}\label{norm-a}
 \sum_{l\geq 1} \big\|\a_l \big\|_2^2 \leq  \frac{2}{p} \sum_{n\geq 2} \T\big( s_n^p -s_{n-1}^p\big)=
 \frac{2}{p}  \Big(\big\|x\big\|_{\h_p^c}^p-\big\|x_1\big\|_p^p\Big).
\end{equation}
On the other hand, for the sequence $(\b_l)_l$, we have:
\begin{align*}
\Big\| \Big( \sum_{l\geq 1} |\b_l|^2 \Big)^{1/2} \Big\|_q &= \Big\| \Big(s_2^{2-p} + \sum_{l\geq 2} (s_{l+1}^{2-p} -s_l^{2-p}) \Big)^{1/2} \Big\|_q \\
&=\big\| s^{1 -(p/2)} \big\|_q\\
&=\big\|x \big\|_{\h_p^c}^{p/q}.
\end{align*}
Combining this last estimate with \eqref{norm-a}, we conclude that
\[
\Big(\sum_{l\geq 1} \big\|\a_l\big\|_2^2\Big)^{1/2} \Big\| \Big( \sum_{l\geq 1} |\b_l|^2 \Big)^{1/2} \Big\|_q  \leq  \sqrt{2/p}\, \Big(\big\|x\big\|_{\h_p^c}^p-\big\|x_1\big\|_p^p\Big)^{1/2}\big\|x\big\|_{\h_p^c}^{p/q}.
\]
This shows that if  we set $\lambda= \sqrt{2/p}\, \Big(\big\|x\big\|_{\h_p^c}^p-\big\|x_1\big\|_p^p\Big)^{1/2}\big\|x\big\|_{\h_p^c}^{p/q}$, then  by definition,   the operator $a=\lambda^{-1}y$ is an algebraic $\h_p^c$-atom and therefore we have the desired decomposition. It remains to verify the norm estimates.
We have:
\begin{align*}
\big\|x_1 \big\|_p  +|\lambda|&=  \big\|x_1\big\|_p + \sqrt{2/p}\, \Big(\big\|x\big\|_{\h_p^c}^p-\big\|x_1\big\|_p^p\Big)^{1/2}\big\|x\big\|_{\h_p^c}^{p/q}\\
&\leq  \Big[ \big\|x_1\big\|_p^{p/2} + \sqrt{2/p}\, \Big(\big\|x\big\|_{\h_p^c}^p-\big\|x_1\big\|_p^p\Big)^{1/2}\Big]\big\|x\big\|_{\h_p^c}^{p/q}.
\end{align*}
 We will verify that  the  last estimate  is  further majorized by $\sqrt{2/p}\, \big\|x \big\|_{\h_p^c}$. Indeed, let $b=\|x\|_{\h_p^c}^{p/2}$ and consider the function  defined  by
 \[
 f(t)=t + \sqrt{2/p}\,\big(b^2- t^2\big)^{1/2} \quad \text{for}\  t \in [0,b].
 \]
 One can check that $f$ attains its maximum at $t=0$.  That is,  for  every $ t\in [0,b]$,
 \[
 f(t) \leq \sqrt{2/p}\,b= \sqrt{2/p}\, \big\|x\big\|_{\h_p^c}^{p/2}.
 \]
  We can now conclude that
 \[
\big\|x_1\big\|_{p}  +|\lambda| \leq f\big(\big\|x_1\big\|_p^{p/2}\big) \big\|x\big\|_{\h_p^c}^{p/q}  \leq \sqrt{2/p}\,\big\|x \big\|_{\h_p^c},
 \]
 which is inequality \eqref{l1-estimate}. For $1\le p<2$, \eqref{l1-estimate}  already gives $\|x\|_{\h_p^c} \leq \sqrt{2/p}\,\big\|x \big\|_{\h_p^c}$. 
 For  the case  $0<p<1$, we have 
  \begin{align*}
 \big\|x \big\|_{\h_{p,\alg}^c} 
 \leq \big(\big\|x_1\big\|_p^p + |\lambda|^p\big)^{1/p}
 \leq  2^{(1-p)/p}\big(\big\|x_1\big\|_p + |\lambda| \big) 
\leq  2^{(1-p)/p} \sqrt{2/p}\big\|x\big\|_{\h_p^c}. 
 \end{align*}
 This concludes the proof. 
\end{proof}

It was shown in \cite[Lemma~3.6.8]{Perrin2} (see also  \cite[Lemma~3.1]{Hong-Junge-Parcet}) that for $1\leq p<2$, the set of algebraic $\h_p^c$-atoms are already absolutely convex.  Theorem~\ref{main} captures this phenomenon for the full range. 
More precisely, it implies that if  $0<p<1$,  $(a_k)_{k\geq 1}$ is a sequence of  algebraic $\h_p^c$-atoms, and $(\lambda_k)_{\geq 1}$ is a sequence of scalars satisfying $\sum_{k\geq 1} |\lambda_k|^p <\infty$,   then $x=\sum_{k\geq 1} \lambda_k a_k$ 
is a scalar multiple of algebraic $\h_p^c$-atom.


\medskip


At the time of this writing, it is still open  if the algebraic atomic Hardy space $\h_{p,\alg}^c(\M)$  coincides with the atomic Hardy space  $\h_{p,\at}^c(\M)$ (equivalently, $\h_{p,\cat}^c(\M)$) for $0<p<1$. However, combined with Proposition~\ref{crude} and Remark~\ref{alg-crude}, the previous theorem implies the following weaker form of atomic decomposition which is sufficient for some applications.

\begin{corollary}\label{weak-decomp} Let $0< p <2$.  Every $x \in \h_p^c(\M)$ admits a  decomposition $\displaystyle{x=x_1 +\sum_{l\geq 1} \lambda_l a_l}$ where
\begin{enumerate}[{\rm (i)}]
\item $x_1 \in L_p(\M_1)$;
\item for each $l\geq 1$, $a_l$ is a $(p,2)_c$-atom and $\lambda_l \in \mathbb{C}$;
\item the series $\sum_{l\geq 1} \lambda_l a_l$ converges in  $\h_p^c(\M)$;
\item the following inequality holds:
\[
\big\|x_1\big\|_p^{\max\{1,p\}} + \sum_{l\geq 1} |\lambda_l |^{\max\{1,p\}} \leq \big(\sqrt{2/p}\, \big\|x \big\|_{\h_p^c}\big)^{\max\{1,p\}}.
\]
\end{enumerate}
\end{corollary}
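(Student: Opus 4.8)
The plan is to combine Theorem~\ref{main} with Proposition~\ref{crude} and Remark~\ref{alg-crude} in a direct, bookkeeping way. Given $x\in\h_p^c(\M)$, Theorem~\ref{main} furnishes the decomposition $x=x_1+y$ with $x_1\in L_p(\M_1)$, $y=\lambda a$ for an algebraic $\h_p^c$-atom $a$, and the key estimate \eqref{l1-estimate}, namely $\|x_1\|_p+|\lambda|\le\sqrt{2/p}\,\|x\|_{\h_p^c}$. So the whole task reduces to expanding the single algebraic atom $a$ into $(p,2)_c$-atoms while keeping control of the coefficient sequence in the $\ell_{\max\{1,p\}}$-sense.

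First I would apply Remark~\ref{alg-crude} to $a$: it writes $a=\sum_n\mu_n c_n$ (convergence in $L_p(\M)$) where each $c_n$ is a $(p,2)_c$-crude atom or an element of the unit ball of $L_p(\M_1)$, with $\sum_n|\mu_n|\le1$ when $0<p\le1$ and $\sum_n|\mu_n|^p\le1$ when $1<p<2$. (For the elements of the unit ball of $L_p(\M_1)$ — which in our setting can only be the trivial ``$\E_1$-part'', and indeed the algebraic atom has $\E_1(a)=0$ by Lemma~\ref{lemma-alg}, so these terms are absent, but one need not belabor this.) Next, each crude atom $c_n$ is decomposed via Proposition~\ref{crude}: fixing $\b>1$, $c_n=\sum_k\nu_{n,k}a_{n,k}$ with $(p,2)_c$-atoms $a_{n,k}$ and $\big(\sum_k|\nu_{n,k}|^p\big)^{1/p}\le\b$. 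Relabelling the double-indexed family $\{\lambda\mu_n\nu_{n,k}a_{n,k}\}$ as a single sequence $\sum_l\lambda_l a_l$ gives the decomposition $x=x_1+\sum_l\lambda_l a_l$; properties (i)--(iii) are then immediate (convergence in $\h_p^c$ follows from $\|a_l\|_{\h_p^c}\le1$ for $(p,2)_c$-atoms together with the summability of the coefficients — quasi-triangle inequality for $0<p\le1$, triangle inequality for $1\le p<2$).

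The computation behind (iv) splits into the two ranges. For $0<p\le1$ one uses $p$-subadditivity of $t\mapsto t^p$: $\sum_l|\lambda_l|^p=|\lambda|^p\sum_n|\mu_n|^p\sum_k|\nu_{n,k}|^p\le|\lambda|^p\cdot\big(\sum_n|\mu_n|^p\big)\cdot\b^p$, and since $\sum_n|\mu_n|\le1$ forces $\sum_n|\mu_n|^p\le 1$ as well (indeed $\sum_n|\mu_n|^p\le(\sum_n|\mu_n|)^p$ is false in general, but $|\mu_n|\le1$ gives $|\mu_n|^p\ge|\mu_n|$... ) — here is the one genuinely delicate point: for $0<p<1$ I would instead keep $\sum_n|\mu_n|\le1$ and note $\sum_n|\mu_n|^p$ need not be bounded by a constant, so one must argue more carefully, e.g. absorb the $n$-sum by observing that Remark~\ref{alg-crude} can equally be stated with $\sum_n|\mu_n|^p\le1$ for $0<p\le1$ (which follows from $\sum|\mu_n|\le1$ only when one has finitely many terms or uniform smallness). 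The clean fix, which I would adopt, is to use that $c_n$ being a crude atom already satisfies $\|c_n\|_{\h_p^c}\le1$, so $a=\sum\mu_n c_n$ with $\sum|\mu_n|\le1$ already exhibits $a$ as essentially (a limit of) a $p$-convex combination, and then the single application of Proposition~\ref{crude} to the \emph{whole} of $a$ — not atom by atom — is cleaner: since $a$ is itself an algebraic atom and hence lies in $\h_p^c(\M)$, but Proposition~\ref{crude} is stated for crude atoms. I would therefore instead invoke Proposition~\ref{crude} together with the observation that an algebraic $\h_p^c$-atom is a norm-limit in $\h_p^c$ of finite sums of $(p,2)_c$-crude atoms with $\ell_p$-summable coefficients of $\ell_p$-norm $\le1$ (combine Lemma~\ref{lemma-alg}'s identification with the crude-atom structure), apply Proposition~\ref{crude} to each such crude atom with the same $\b$, collapse the indices, and obtain $\sum_l|\lambda_l|^p\le|\lambda|^p\b^p$; letting $\b\to1$ and combining with \eqref{l1-estimate} via $\|x_1\|_p^p+\sum_l|\lambda_l|^p\le\|x_1\|_p^p+|\lambda|^p\le(\|x_1\|_p+|\lambda|)^p\le(\sqrt{2/p}\,\|x\|_{\h_p^c})^p$ yields (iv) for $0<p\le1$. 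For $1\le p<2$, $t\mapsto t^p$ is convex and one uses the triangle inequality in $\ell_p$ at each stage: $\big(\sum_l|\lambda_l|^p\big)^{1/p}\le|\lambda|\big(\sum_n|\mu_n|^p\big)^{1/p}\b\le|\lambda|\b$ (using $\sum_n|\mu_n|^p\le1$ from Remark~\ref{alg-crude} in this range), then $\|x_1\|_p+\big(\sum_l|\lambda_l|^p\big)^{1/p}\le\|x_1\|_p+|\lambda|\le\sqrt{2/p}\,\|x\|_{\h_p^c}$, and letting $\b\to1$.

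The main obstacle, as flagged above, is purely the coefficient bookkeeping for $0<p<1$: one must ensure the intermediate ``crude-atom layer'' contributes a factor that is $\le1$ in the $\ell_p$-quasinorm rather than merely in $\ell_1$; the resolution is to extract, from the proof of Lemma~\ref{lemma-alg} or from Remark~\ref{alg-crude} read in the correct range, a crude-atomic expansion of the algebraic atom with $\ell_p$-summable coefficients of quasinorm at most $1$ (which is exactly what Remark~\ref{alg-crude} asserts for $0<p\le1$ — there the bound is $\sum_n|\mu_n|\le1$, and since each $|\mu_n|\le1$ we get $\sum_n|\mu_n|^p\le\#\{n\}$ in the worst case, so one genuinely needs the \emph{finite} reduction: approximate $a$ in $\h_p^c$ by a finite crude-atomic sum, for which $\ell_1$-control upgrades to $\ell_p$-control trivially via $\sum|\mu_n|^p\le(\max|\mu_n|)^{p-1}\sum|\mu_n|\le 1$ is still not automatic — so the honest route is to re-derive, from the construction in the proof of Theorem~\ref{main}, a \emph{direct} $(p,2)_c$-atomic expansion by feeding the explicit $\a_l\b_l$ decomposition into the proof of Proposition~\ref{crude}, thereby producing the $\lambda_l$'s directly with $\sum_l|\lambda_l|^p\le\b^p|\lambda|^p$). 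I would present the argument in this last, self-contained form: run the proof of Theorem~\ref{main} to get $y=\sum_l\a_l\b_l$, then apply the spectral-slicing of Proposition~\ref{crude} to each product $\a_l\b_l$ (after normalizing it to a crude atom), obtaining $(p,2)_c$-atoms $a_{l,k}$ with coefficients $\lambda_{l,k}$ satisfying $\sum_k|\lambda_{l,k}|^p\le\b^p\|\a_l\|_2^p\|\b_l\|_q^p$, and then use Lemma~\ref{com-l2} exactly as in Remark~\ref{alg-crude} to sum over $l$, giving $\sum_{l,k}|\lambda_{l,k}|^{\max\{1,p\}}\le\b^{\max\{1,p\}}|\lambda|^{\max\{1,p\}}$; combining with \eqref{l1-estimate} and sending $\b\to1$ finishes the proof.
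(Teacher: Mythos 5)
Your final argument is essentially the paper's own proof of this corollary: take the decomposition $x=x_1+\lambda a$ from Theorem~\ref{main} (whose proof exhibits $\lambda a=\sum_l\a_l\b_l$), pass to $(p,2)_c$-crude atoms as in Remark~\ref{alg-crude}, decompose each crude atom by Proposition~\ref{crude}, and collect the coefficients with H\"older's inequality and Lemma~\ref{com-l2}. The bound you arrive at, $\sum_{l,k}|\lambda_{l,k}|^{\max\{1,p\}}\le\b^{\max\{1,p\}}|\lambda|^{\max\{1,p\}}$, combined with \eqref{l1-estimate}, gives (iv) for each fixed $\b>1$ (obtaining the constant exactly $\sqrt{2/p}$ requires the limit $\b\to1$, an imprecision the statement arguably shares).

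However, the ``genuinely delicate point'' that occupies most of your write-up is a phantom. For $0<p<1$ the exponent $\max\{1,p\}$ in (iv) equals $1$, so the corollary asks only for an $\ell_1$ bound on the coefficients, not an $\ell_p$ bound --- this is exactly why it is a \emph{weak} form of atomic decomposition and why $\h_{p,\at}^c(\M)=\h_p^c(\M)$ remains open for $p<1$. The $\ell_1$ control $\sum_n|\mu_n|\le1$ of Remark~\ref{alg-crude}, together with the observation that Proposition~\ref{crude} yields $\sum_k|\nu_{n,k}|\le\big(\sum_k|\nu_{n,k}|^p\big)^{1/p}\le\b$ (the $\ell_p\hookrightarrow\ell_1$ contraction for $p\le1$), already gives $\sum_{n,k}|\mu_n\nu_{n,k}|\le\b$ with no further work; the detour through finite reductions and uniform smallness is unnecessary, and the intermediate inequality $\sum_n|\mu_n|^p\le1$ you briefly consider is indeed both false in general and not needed. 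One point that does deserve more care is (iii): for $0<p<1$, convergence of $\sum_l\lambda_la_l$ in the $p$-normed space $\h_p^c(\M)$ does \emph{not} follow from $\sum_l|\lambda_l|<\infty$ and $\|a_l\|_{\h_p^c}\le1$ via the quasi-triangle inequality; one needs $\sum_l|\lambda_l|^p<\infty$. This does hold in your construction --- the outer sum over $l$ is finite for a finite martingale and each application of Proposition~\ref{crude} produces an $\ell_p$-summable family --- but it should be said explicitly, and the passage from finite martingales to general $x\in\h_p^c(\M)$ requires an approximation argument that you (like the paper) leave implicit.
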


\bigskip

For the case $1\leq p<2$, the preceding corollary  when coupled with Proposition~\ref{crude} provides  constructive
 proofs of all  atomic decompositions from \cite{Bekjan-Chen-Perrin-Y}.
  In particular,  it solves \cite[Problem~1]{Bekjan-Chen-Perrin-Y}. We state this explicitly in the next result.
\begin{corollary}\label{atom-1}
We have
\[
\h_1^c(\M)=\h_{1,\at}^c(\M) \quad \text{with equivalent norms}.
\]
More precisely,  if $x \in \h_1^c(\M)$, then
\[
\big\| x \big\|_{\h_1^c} \leq  \big\|x \big\|_{\h_{1,\at}^c} \leq  \sqrt{2}\, \big\|x \big\|_{\h_1^c}.
\]
The constant $\sqrt{2}$ is optimal.
\end{corollary}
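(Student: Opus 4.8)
The plan is to obtain the isomorphism $\h_1^c(\M)=\h_{1,\at}^c(\M)$ by combining the results already established in this section, and then to chase the two constants through carefully. First I would establish the two inclusions. The inclusion $\h_{1,\at}^c(\M)\subseteq\h_1^c(\M)$ together with the estimate $\|x\|_{\h_1^c}\le\|x\|_{\h_{1,\at}^c}$ is immediate from \eqref{inclusion1}, since $\h_{1,\at}^c(\M)\subseteq\h_{1,\alg}^c(\M)\subseteq\h_1^c(\M)$, and each $(p,2)_c$-atom (resp.\ element of the unit ball of $L_1(\M_1)$) has $\h_1^c$-norm at most $1$ by Lemma~\ref{lemma-alg}; the quasi-triangle inequality for $p=1$ (here a genuine triangle inequality) then gives $\|x\|_{\h_1^c}\le\sum_k|\lambda_k|$ for any admissible decomposition, hence $\|x\|_{\h_1^c}\le\|x\|_{\h_{1,\at}^c}$.

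For the reverse inclusion, I would invoke Corollary~\ref{weak-decomp} with $p=1$. It produces, for any $x\in\h_1^c(\M)$, a decomposition $x=x_1+\sum_{l\ge1}\lambda_l a_l$ with $x_1\in L_1(\M_1)$, each $a_l$ a $(1,2)_c$-atom, the series converging in $\h_1^c(\M)$ (hence in $L_1(\M)$, since $\|\cdot\|_{\h_1^c}$ dominates $\|\cdot\|_{1}$ on martingales by Lemma~\ref{com-l2}), and with $\|x_1\|_1+\sum_{l\ge1}|\lambda_l|\le\sqrt2\,\|x\|_{\h_1^c}$. Writing $x_1=\|x_1\|_1\cdot(x_1/\|x_1\|_1)$ exhibits $x_1$ as a scalar multiple of an element of the unit ball of $L_1(\M_1)$, so $x=\sum_k\lambda_k a_k$ is an admissible atomic decomposition in the sense of the definition of $\h_{1,\at}^c(\M)$, with $\sum_k|\lambda_k|=\|x_1\|_1+\sum_l|\lambda_l|\le\sqrt2\,\|x\|_{\h_1^c}$. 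Taking the infimum yields $\|x\|_{\h_{1,\at}^c}\le\sqrt2\,\|x\|_{\h_1^c}$. This gives the norm equivalence with the stated constants.

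The remaining, and genuinely nontrivial, point is the \emph{optimality} of $\sqrt2$. The strategy is to produce a sequence of test martingales on which the ratio $\|x\|_{\h_{1,\at}^c}/\|x\|_{\h_1^c}$ approaches $\sqrt2$. A natural candidate is a two-step scalar (commutative) martingale adapted to a dyadic-type filtration, chosen so that $s_1=0$ and the conditioned square function concentrates all its mass at a single atom; then $\|x\|_{\h_1^c}=\|s_c(x)\|_1$ can be computed exactly, while any atomic decomposition is constrained because a $(1,2)_c$-atom $a$ supported on a projection $e\in\M_n$ with $\E_n(a)=0$ must satisfy $\|a\|_2\le\T(e)^{-1/2}$, which via Cauchy--Schwarz forces $\|a\|_1\le\T(e)^{1/2}\le1$ but leaves a genuine gap against the $\h_1^c$-norm. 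I would track where the constant $\sqrt2$ enters in the proof of Theorem~\ref{main} — it comes from maximizing $f(t)=t+\sqrt{2}\,(b^2-t^2)^{1/2}$ at $t=0$, i.e.\ from the case $x_1=0$ — and design the extremal family to realize exactly this worst case, namely $x$ with $\E_1(x)=0$ and $\|x\|_2=\|s_c(x)\|_2$ as small as possible relative to $\|s_c(x)\|_1$. The main obstacle is this lower bound: one must argue that \emph{every} atomic decomposition of the chosen $x$ has $\sum_k|\lambda_k|\ge(\sqrt2-\varepsilon)\|x\|_{\h_1^c}$, which requires a lower estimate on $\|x\|_{\h_{1,\at}^c}$ rather than the routine upper estimates, and this is most cleanly done by testing against a suitable functional in the dual space $(\h_1^c)^*=\mathrm{bmo}^c$ that separates atoms sharply; I would set up such a functional explicitly on the test martingale and compute both pairings.
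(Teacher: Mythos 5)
Your treatment of the two norm inequalities is correct and is essentially the paper's route: the lower bound $\|x\|_{\h_1^c}\le\|x\|_{\h_{1,\at}^c}$ follows from Lemma~\ref{lemma-alg} through the inclusions \eqref{inclusion1}, and the upper bound with constant $\sqrt2$ is exactly Corollary~\ref{weak-decomp} at $p=1$. (One small mis-citation: the convergence of the atomic series in $L_1(\M)$ follows from $\|y\|_1\le\sqrt2\,\|y\|_{\h_1^c}$, i.e.\ Corollary~\ref{Best}, not from Lemma~\ref{com-l2}, which only controls $\bigl(\sum_n\|dy_n\|_1^2\bigr)^{1/2}$.)

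The optimality of $\sqrt2$ is where the proposal has a genuine gap: what you offer is a strategy, not a proof. You acknowledge that the decisive step is a lower bound on $\|x\|_{\h_{1,\at}^c}$ valid for \emph{every} admissible atomic decomposition of a test martingale, and neither the extremal family nor the $\mathrm{bmo}^c$-pairing is actually produced; as written, nothing rules out a constant strictly between $1$ and $\sqrt2$. Moreover, the route you sketch is far heavier than needed, and the missing ingredient is an observation you already state in passing. For any $(1,2)_c$-atom $a$ with associated projection $e$, Cauchy--Schwarz gives $\|a\|_1=\|ae\|_1\le\|a\|_2\,\T(e)^{1/2}\le 1$, and trivially $\|a\|_1\le1$ for elements of the unit ball of $L_1(\M_1)$; hence $\|x\|_1\le\sum_k|\lambda_k|$ for every admissible decomposition, i.e.\ $\|x\|_1\le\|x\|_{\h_{1,\at}^c}$. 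If the equivalence held with some constant $C<\sqrt2$, this would give $\|x\|_1\le C\,\|x\|_{\h_1^c}$ for all $x$, contradicting the sharpness of the constant $\sqrt{2/p}$ in Corollary~\ref{Best} (that is, \cite[Theorem~4.11]{Jiao-Ran-Wu-Zhou}), whose extremality is already witnessed by classical commutative martingales. This is precisely the paper's argument: the sharpness is inherited from the known sharp inequality $\|x\|_1\le\sqrt2\,\|x\|_{\h_1^c}$, and no new extremal construction or duality computation is required.
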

Details  of the construction are left to the reader. We only point out  the fact that
 the constant $\sqrt{2}$ is the best possible which  follows from the trivial  inequality  $\|x\|_1 \leq \|x\|_{\h_{1,\at}^c}$ and \cite[Theorem~4.11]{Jiao-Ran-Wu-Zhou}
(see also Corollary~\ref{Best} below).

\begin{remark}
Using the constructive approach to the noncommutative Davis decomposition \cite{Junge-Perrin, Ran-Wu-Xu} and our proof of atomic decomposition for $\h_1^c(\M)$ in Corollary~\ref{atom-1},  one can explicitly express the decomposition of any element of $\H_1^c(\M)$ into \emph{$2$-atomic blocks} in the sense of  Conde-Alonso and Parcet (see \cite[Theorem~1.1]{Alonso-Parcet}).
\end{remark}

\medskip

We  take the opportunity to present below a simple  approach to
   sharp inequalities  between $L_p$-norms  and $\h_p^c$-norms when $0<p<2$ based on the construction used in the proof of Theorem~\ref{main}. The next result was obtained  recently in \cite{Jiao-Ran-Wu-Zhou} and  is a
 noncommutative analogue of a sharp inequality from \cite{Wang}.  We refer to  the monograph \cite{OS}  for extensive discussions on the importance of  sharp inequalities in  classical  martingale theory.
\begin{corollary}[{\cite[Theorem~4.11]{Jiao-Ran-Wu-Zhou}}]\label{Best}
Let  $0<p < 2$. For every $x \in \h_p^c(\M)$, the following inequality holds:
\[
\big\|x \big\|_p \leq  \sqrt{2/p}\, \big\| x \big\|_{\h_p^c}.
\]
The constant $\sqrt{2/p}$ is the best possible.
\end{corollary}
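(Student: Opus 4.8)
The plan is to establish the inequality $\|x\|_p\le\sqrt{2/p}\,\|x\|_{\h_p^c}$ by recycling the telescoping factorization from the proof of Theorem~\ref{main} and then applying H\"older's inequality to it (rather than the Hardy space estimate of Lemma~\ref{lemma-alg}). The crucial modification is that one must \emph{not} peel off the first term $x_1$ as in the proof of Theorem~\ref{main}: keeping the telescoping sum uniform makes the ``column part'' of the factorization exactly equal to $s_c(x)^{2-p}$, with no error term, which is what produces the sharp constant. The optimality of $\sqrt{2/p}$ is already contained in \cite[Theorem~4.11]{Jiao-Ran-Wu-Zhou} (it reflects the sharpness of the classical inequality of \cite{Wang}), so nothing new is required for that assertion.

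First I would reduce, using density of finite $L_\infty$-martingales in $\h_p^c(\M)$ and the approximation from the proof of Theorem~\ref{main}, to a finite martingale $x=(x_n)$ with each $s_n:=s_{c,n}(x)$ invertible with bounded inverse, so that each $s_n^{-2+p}$ is bounded and lies in $\M_{n-1}$. With $s_0=0$ one has $s_n^{2-p}=\sum_{j=1}^n(s_j^{2-p}-s_{j-1}^{2-p})$, where each summand is positive (by operator monotonicity of $t\mapsto t^{(2-p)/2}$) and predictable (since $s_j\in\M_{j-1}$). Putting $\b_j:=(s_j^{2-p}-s_{j-1}^{2-p})^{1/2}$ and $\a_j:=\big(\sum_{n\ge j}dx_n s_n^{-2+p}\big)\b_j$, the same rearrangement of $x=\sum_{n\ge1}dx_n s_n^{-2+p}s_n^{2-p}$ as in the proof of Theorem~\ref{main} gives
\[
x=\sum_{j\ge1}\Big(\sum_{n\ge j}dx_n\,s_n^{-2+p}\Big)\b_j^2=\sum_{j\ge1}\a_j\b_j,
\]
with $\a_j\in L_2(\M)$ and $\b_j\in L_q(\M_{j-1})$, where $1/p=1/2+1/q$. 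Viewing this finite sum as a row--column product and applying H\"older's inequality yields
\[
\|x\|_p\le\Big(\sum_{j\ge1}\|\a_j\|_2^2\Big)^{1/2}\,\Big\|\Big(\sum_{j\ge1}\b_j^2\Big)^{1/2}\Big\|_q.
\]

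For the column factor, the $\b_j^2$ telescope to $s_c(x)^{2-p}$, and since $(2-p)q/2=p$ we get $\big\|\big(\sum_j\b_j^2\big)^{1/2}\big\|_q=\|s_c(x)^{(2-p)/2}\|_q=\|s_c(x)\|_p^{p/q}=\|x\|_{\h_p^c}^{p/q}$. For the row factor I would rerun the computation from the proof of Theorem~\ref{main}: the differences $dx_n$ remain mutually orthogonal in $L_2$ after being multiplied on the right by the predictable operators $s_n^{-2+p}$ and $\b_j$, so the cross terms vanish, and after telescoping $\sum_{j=1}^n\b_j^2=s_n^{2-p}$ and using $\E_{n-1}|dx_n|^2=s_n^2-s_{n-1}^2$ with predictability of $s_n$, one finds $\sum_j\|\a_j\|_2^2=\sum_{n\ge1}\T\big((s_n^2-s_{n-1}^2)s_n^{-2+p}\big)$. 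Lemma~\ref{mart-2} applied to $a=s_n$, $b=s_{n-1}$ bounds this by $\frac2p\sum_{n\ge1}\T(s_n^p-s_{n-1}^p)=\frac2p\,\T(s_c(x)^p)=\frac2p\,\|x\|_{\h_p^c}^p$. Plugging both factors into the H\"older bound and using $p/2+p/q=1$ gives $\|x\|_p\le\sqrt{2/p}\,\|x\|_{\h_p^c}^{p/2+p/q}=\sqrt{2/p}\,\|x\|_{\h_p^c}$, and this passes to all of $\h_p^c(\M)$ by density.

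The step requiring the most care is the row factor: one has to verify that every cross term $\T\big(\b_j s_m^{-2+p}dx_m^* dx_n s_n^{-2+p}\b_j\big)$ with $m\ne n$ vanishes and that the two iterated summations telescope correctly. Fortunately this is exactly the computation already performed in the proof of Theorem~\ref{main} (minus the cosmetic isolation of $x_1$), so it can be quoted rather than redone; I do not expect the H\"older step or the limiting argument to present any difficulty, and for the optimality of the constant I would simply cite \cite[Theorem~4.11]{Jiao-Ran-Wu-Zhou}.
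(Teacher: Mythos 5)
Your proposal is correct and follows essentially the same route as the paper: the paper's proof of Corollary~\ref{Best} uses exactly the factorization $x=\sum_l a_lb_l$ with $a_l=\sum_{n\ge l}dx_n s_n^{-2+p}(s_l^{2-p}-s_{l-1}^{2-p})^{1/2}$ and $b_l=(s_l^{2-p}-s_{l-1}^{2-p})^{1/2}$, explicitly noting that one does not isolate $x_1$, then applies H\"older's inequality and the estimate $\sum_l\|a_l\|_2^2=\sum_n\T\big(s_n^{-2+p}(s_n^2-s_{n-1}^2)\big)\le\frac2p\|x\|_{\h_p^c}^p$ from Lemma~\ref{mart-2}, with sharpness quoted from the classical case. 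Your $\a_j$, $\b_j$ and all subsequent steps coincide with this argument.
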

\begin{proof}  Let $x  \in \M$. By approximation, we assume that  for every $n\geq 1$, $s_{c,n}(x)$  is invertible with bounded inverse. As above, we denote $s_{c,n}(x)$ by $s_n$ and we take $s_0=0$. We write $x=\sum_{l\geq 1} a_lb_l$ with $a_l=\sum_{n\geq l} dx_n s_n^{-2+p}(s_l^{2-p}-s_{l-1}^{2-p})^{1/2}$ and $b_l=(s_l^{2-p}-s_{l-1}^{2-p})^{1/2}$. As slight difference here is that we do not need to isolate  the first term $x_1$ since we do not  require any particular properties on the sequences $(a_l)_{l\geq 1}$ and $(b_l)_{l\geq 1}$ beside their respective norms. Using H\"older's inequality, we may deduce that:
\begin{align*}
\big\|x\big\|_p &= \Big\| \sum_{l\geq 1} a_l b_l\Big\|_p\\
&\leq \Big\| \Big( \sum_{l\geq 1} a_l a_l^*\Big)^{1/2} \Big\|_2 . \Big\|\Big( \sum_{l\geq 1} b_l^* b_l\Big)^{1/2}\Big\|_{2p/(2-p)}\\
&=\Big( \sum_{l\geq 1}\big\|a_l\big\|_2^2\Big)^{1/2} \big\| s^{1-(p/2)}\big\|_{2p/(2-p)}\\
&=\Big( \sum_{l\geq 1}\big\|a_l\big\|_2^2\Big)^{1/2}\big\|x\big\|_{\h_p^c}^{1-(p/2)}.
\end{align*}
Proceeding as in  the proof of Theorem~\ref{main},  we have
\[
 \sum_{l\geq 1}\big\|a_l\big\|_2^2 =\sum_{n\geq 1} \T\big(s_n^{-2+p}(s_n^2-s_{n-1}^2)\big) \leq \frac{2}{p} \big\|x\big\|_{\h_p^c}^p.
\]
This clearly yields  the desired inequality.
The fact that the above constant is sharp is already the case for classical martingales
as shown in \cite{OS,Wang}.
\end{proof}

\begin{remark}
In \cite[Theorem~4.11]{Jiao-Ran-Wu-Zhou}, it was also proved that for $0<p<2$, the following sharp inequality holds:
\[
\big\|x \big\|_{\H_p^c} \leq  \sqrt{2/p}\, \big\| x \big\|_{\h_p^c}.
\]
We were  able to verify this through the decomposition used above only when $x_1=0$. That is,  for every $x\in \h_p^{0,c}(\M)$. The general case does not appear to follow from our construction.
\end{remark}

\begin{remark}
 All results in  this section easily extend to semifinite von Neumann algebras with minor modifications. Moreover, some of them remain valid in the type III
case. We refer to \cite{JX} for the definitions of noncommutative martingales and Hardy spaces in a $\s$-finite von Neumann algebra $\M$. The $(p,2)_c$-crude atoms  and algebraic $\h_p^c$-atoms are defined exactly in the same way. Using Haagerup's reduction theorem \cite{Haagerup-Junge-Xu}, we can show that the corresponding atomic Hardy space $\h_{p,\alg}^{c}(\M)$ coincides with $\h_p^c(\M)$ for all $0<p<2$.
 \end{remark}


\section{$(p,\infty)$-atoms}\label{(p,infty)-atom}


We begin with the definition of  $(p,q)$-atoms that extends the concept of $(p,2)$-atoms considered in the previous section.

\begin{definition}\label{df:pq-atom}
Let $0 < p < 2$ and  $\max(p, 1)< q \le \8.$ An operator $a \in L_p (\M)$ is called a $(p, q)_c$-atom, if there exist $n \ge 1$ and a projection $e \in \M_n$ such that:
\begin{enumerate}[{\rm (i)}]

\item $\E_n (a) =0;$

\item $r(a) \le e;$

\item $\| a \|_{\h^c_q} \le \tau (e)^{1/q-1/p}.$

\end{enumerate}
\end{definition}

The concept of $(p, q)_c$-atoms was introduced in \cite{Hong-Mei} (for $p=1$). However, the notion of $(p, \8)_c$-atoms is new and exactly the noncommutative analogue of the so-called {\it simple} atom in the classical setting (see \cite[Definition 2.4]{Weisz}). Note that, the associated $(p, \8)_c$-atom in \cite{Hong-Mei} was defined by using $\| a \|_{\mathrm{bmo}^c}$ in place of $\| a \|_{\h^c_{\8}}$ in $(\mathrm{iii}),$ which we may call instead a $(p, \mathrm{bmo})_c$-atom for the sake of convenience. Clearly, $(p, q_1)_c$-atoms are necessarily $(p, q_2)_c$-atoms whenever $0 <p <2$ and $\max(p, 1)< q_2 < q_1 \le \8.$ On the other hand, a $(p, \8)_c$-atom is a $(p, \mathrm{bmo})_c$-atom.

\begin{definition}\label{df:pq-atomicHardySpace}
Let $0 < p \le 1< q \le \8.$  
Let $\h_{p,\,\at_q}^c(\M)$  be the space of all $x \in L_p (\M)$ which admits a decomposition
\be
x = \sum_k \lambda_k a_k \ \text{(convergence in $L_p(\M)$)},
\ee
where for each $k,$ $a_k$ is a $(p, q)_c$-atom or an element in the unit ball of $L_p (\M_1),$ and $\lambda_k \in \mathbb{C}$ satisfying $\sum_k |\lambda_k|^p < \8$.  $\h_{p,\,\at_q}^c(\M)$ is equipped with the $p$-norm:
\be
\| x \|_{\h^c_{p,\,\at_q}} = \inf \Big ( \sum_k |\lambda_k|^p \Big )^{1/p}\,,
\ee
where the infimum is taken over all decompositions of $x$ described above. 
\end{definition}

By definition, $\h^c_{p, \,\at_2} (\M) = \h^c_{p, \mathrm{at}} (\M)$ for all $0<p \le1.$ As in the case of   $\h^r_{p, \mathrm{at}} (\M)$ and $\h_{p, \mathrm{at}} (\M)$ defined in the previous section, we may also define the row version $\h^r_{p, \,\at_q} (\M)$ and the mixed version $\h_{p, \,\at_q} (\M).$  We omit the details.

One can check that $\h^c_{p, \at_q} (\M)\subset \h^c_p (\M)$ for any $0<p\le 1<q\le\8$. On the other hand, it follows from Definition \ref{df:pq-atomicHardySpace} that for $0 < p \le 1$ and $2< q < \8$  
\beq\label{inclusion-infty}
\h^c_{p, \,\at_\8} (\M) \subseteq \h^c_{p, \,\at_q} (\M) \subseteq \h^c_{p, \mathrm{at}} (\M).
\eeq
 The following theorem shows that the reverse inclusion holds too, so $\h^c_{p, \,\at_q} (\M) = \h^c_{p, \mathrm{at}} (\M)$ for  $0 < p \le 1$ and $2< q < \8$. The latter equality was proved in \cite{Hong-Mei} for $p=1$ and $1<q<\8$. 

\smallskip

The proof of the decomposition in the following theorem also works for $1<p<2$, so we state it for the full range $0<p<2$. However, at the time of this writing, we cannot prove the same result for the case $1<q<2$. 

\begin{theorem}\label{th:infty-atomicDecomp}
Let $0< p < 2.$ Then every $(p,2)_c$-atom $a$ admits a decomposition: 
\be
a= \sum_k \lambda_k a_k\quad (\text{converges in } L_p (\M)),
\ee
where  each $a_j$ is a $(p,\infty)_c$-atom, and $\lambda_j \in \mathbb{C}$ such that
\be
\sum_j |\lambda_j |^p \le3^{p/2}\,\frac{\l^2-2\l^p}{\l^{2-p}-4}\,,
\ee
where $\l$ is any constant satisfying $\l^{2-p}>4$.
Consequently, $\h^c_{p,\mathrm{at}} (\M) = \h^c_{p, \,\at_\8} (\M)$ for all $0<p\le1$.
\end{theorem}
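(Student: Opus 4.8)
The final equality $\h^c_{p,\mathrm{at}}(\M)=\h^c_{p,\,\at_\8}(\M)$ will be deduced from the single‑atom decomposition together with the reverse inclusion $\h^c_{p,\,\at_\8}(\M)\subseteq\h^c_{p,\mathrm{at}}(\M)$. The latter is the easy direction: every $(p,\8)_c$‑atom $a$ (with associated $n$ and $e\in\M_n$) is automatically a $(p,2)_c$‑atom, since $\E_n(a)=0$ and $r(a)\le e$ force $da_ke=da_k$ for all $k>n$, whence $s_c(a)^2=e\,s_c(a)^2\,e$, so $s_c(a)$ is supported by $e$ and, $\tau$ being normalized, $\|a\|_{\h^c_2}=\|s_c(a)\|_2\le\tau(e)^{1/2}\|s_c(a)\|_\8\le\tau(e)^{1/2-1/p}$. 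Granting the single‑atom decomposition, I would take $x\in\h^c_{p,\mathrm{at}}(\M)$ with a representation $x=\sum_k\mu_kb_k$ (the $b_k$ being $(p,2)_c$‑atoms or elements of the unit ball of $L_p(\M_1)$, $\sum_k|\mu_k|^p<\8$) and re‑expand it by replacing each $(p,2)_c$‑atom $b_k$ by its decomposition into $(p,\8)_c$‑atoms; the $p$‑quasi‑norm of the new coefficient family is then at most $3^{p/2}\,\frac{\l^2-2\l^p}{\l^{2-p}-4}\sum_k|\mu_k|^p$ (the $L_p(\M_1)$‑terms being left untouched), which gives $\h^c_{p,\mathrm{at}}(\M)\subseteq\h^c_{p,\,\at_\8}(\M)$.

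The core of the argument is the decomposition of a fixed $(p,2)_c$‑atom $a$, say associated with $n$ and $e\in\M_n$; I would set $r=\tau(e)$, $\mu=r^{-1/p}$, fix $\l$ with $\l^{2-p}>4$, and reduce to a finite martingale. The idea is a noncommutative Calder\'on--Zygmund cut‑off of $a$ at the geometric levels $\l^j\mu$, $j\ge0$; since the spectral projections of $s_c(a)$ are not adapted, they must be replaced by Cuculescu's projections. First note $da_k=0$ for $k\le n$ and (from $r(a)\le e$) $r(da_k)\le e$, so $S_m:=s_{c,m}(a)^2=\sum_{k\le m}\E_{k-1}|da_k|^2$ is a predictable, increasing sequence, vanishing for $m\le n$, with support under $e$. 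For each $j\ge0$ I would apply Cuculescu's construction to $(S_m)_m$ at level $\l^{2j}\mu^2$, obtaining decreasing predictable projections $q^{(j)}=(q^{(j)}_m)_m$ with $q^{(j)}_m\ge q^{(j+1)}_m$, $q^{(j)}_m\ge{\bf 1}-e$, $q^{(j)}_mS_mq^{(j)}_m\le\l^{2j}\mu^2q^{(j)}_m$, and the weak‑type bound $\tau({\bf 1}-q^{(j)}_\8)\le\l^{-2j}\mu^{-2}\|s_c(a)\|_2^2\le\l^{-2j}r$. The orthogonal projections $e^{(j)}_m:=q^{(j)}_{m-1}-q^{(j)}_m\in\M_m$ then satisfy $\sum_me^{(j)}_m={\bf 1}-q^{(j)}_\8=:g^{(j)}\le e$ with $\tau(g^{(j)})\le\l^{-2j}r$, and they record ``the level at which $s_c(a)$ first crosses $\l^j\mu$''.

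Next I would build the atoms along the double index $(j,m)$. Writing $\hat a^{(j)}$ for the martingale right‑stopped via $q^{(j)}$, i.e.\ $d\hat a^{(j)}_k=da_k\,q^{(j)}_k$ (a genuine martingale difference since $q^{(j)}_k\in\M_{k-1}$), one has $\hat a^{(j)}_n=0$, $r(\hat a^{(j)})\le e$, and $s_c(\hat a^{(j)})^2=\sum_kq^{(j)}_k\,\E_{k-1}(|da_k|^2)\,q^{(j)}_k$, which is controlled by $\l^{2j}\mu^2$ up to the last‑jump term $\E_{k-1}|da_k|^2\le S_k\le\l^{2(j+1)}\mu^2$ --- this is the source of the numerical constant $3^{p/2}$ and of the requirement $\l^{2-p}>4$ in the final geometric series. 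Since $\hat a^{(j)}\to a$ in $L_p$, I get $a=\hat a^{(0)}+\sum_{j\ge0}(\hat a^{(j+1)}-\hat a^{(j)})$ with $\hat a^{(0)}$ itself a multiple of a $(p,\8)_c$‑atom supported by $e$, and each difference $\hat a^{(j+1)}-\hat a^{(j)}$ is split further according to the stopping level $m$ via the family $(e^{(j)}_m)_m$, so that the resulting $(j,m)$‑piece has right support under $e^{(j)}_m\in\M_m$, vanishing conditional expectation $\E_m$, and $\h^c_\8$‑norm at most a fixed multiple of $\l^{j+1}\mu$. Normalizing, the $(j,m)$‑piece is $\l_{j,m}a_{j,m}$ with $a_{j,m}$ a $(p,\8)_c$‑atom and $|\l_{j,m}|\lesssim\l^{j+1}\mu\,\tau(e^{(j)}_m)^{1/p}$; summing over $m$ (using $\mu^pr=1$) gives $\sum_m|\l_{j,m}|^p\lesssim\l^{(j+1)p}\l^{-2j}$, and summing the resulting series over $j$ --- convergent precisely because $\l^{2-p}>4$ once the noncommutative losses are accounted for --- together with the leading term yields the stated bound on $\sum_{j,m}|\l_{j,m}|^p$ after relabeling the double index.

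The step I expect to be the main obstacle is the construction of the $(j,m)$‑pieces as \emph{genuine} $(p,\8)_c$‑atoms: one must cut $\hat a^{(j+1)}-\hat a^{(j)}$ into pieces that are simultaneously supported by a single adapted projection $e^{(j)}_m\in\M_m$, have $\E_m$‑mean zero, and have an $L_\8$‑bounded conditioned square function. Reconciling all three requires delicate algebraic manipulations with the non‑commuting Cuculescu projections --- in particular controlling the cross terms in $\sum_kq^{(j)}_k\E_{k-1}(|da_k|^2)q^{(j)}_k$ after localizing to $e^{(j)}_m$ --- and it is exactly this step where the extra numerical losses reflected by the hypothesis $\l^{2-p}>4$ and the constant $3^{p/2}$ are incurred.
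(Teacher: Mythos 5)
Your reduction steps are fine: the inclusion $\h^c_{p,\,\at_\8}(\M)\subseteq\h^c_{p,\at}(\M)$ (every $(p,\8)_c$-atom is a $(p,2)_c$-atom, by exactly the support argument you give) and the passage from the single-atom decomposition to the equality of the two atomic spaces both match the paper. The gap is in the core construction, and it sits exactly where you say you expect it: your scheme never actually produces pieces with an $L_\8$-bounded conditioned square function. Two specific points fail. First, a single Cuculescu truncation $d\hat a^{(j)}_k = da_k\,q^{(j)}_k$ only yields the increment bound $\E_{k-1}[|d\hat a^{(j)}_k|^2] = q^{(j)}_k\E_{k-1}(|da_k|^2)q^{(j)}_k\le\l^{2j}\mu^2 q^{(j)}_k$; the sum $\sum_k q^{(j)}_k(S_k-S_{k-1})q^{(j)}_k$ does not telescope, because $q^{(j)}_k$ commutes with $q^{(j)}_{k-1}S_kq^{(j)}_{k-1}$ but not with $q^{(j)}_{k-1}S_{k-1}q^{(j)}_{k-1}$, so the cross terms do not cancel and $\|s_c(\hat a^{(j)})\|_\8$ is not controlled. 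This is not a ``last-jump term'' issue but a structural one. Second, your ladder $a=\hat a^{(0)}+\sum_j(\hat a^{(j+1)}-\hat a^{(j)})$ tacitly assumes that Cuculescu projections at level $\l^{2(j+1)}\mu^2$ dominate those at level $\l^{2j}\mu^2$; unlike classical stopping times, Cuculescu projections at different thresholds are built from different compressions of $(S_m)$ and are not nested in $j$, so the differences $\hat a^{(j+1)}-\hat a^{(j)}$ cannot be localized to your ``annuli'' $e^{(j)}_m$.

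The paper resolves both points by a different architecture. At each stage it performs a \emph{double} truncation: first $dy_k=db_kq_k$ (which bounds each conditional increment by $\l^2$), then a second Cuculescu sequence $(\pi_k)$ for the supermartingale $(s^2_{c,k}(y))$, setting $g=\sum_k dy_k\pi_{k-1}$; the identity
\[
s^2_{c,k}(g)=\pi_ks^2_{c,k}(y)\pi_k+\sum_{j}(\pi_{j-1}-\pi_j)\,s^2_{c,j}(y)\,(\pi_{j-1}-\pi_j)
\]
then gives $\|s_c(g)\|_\8\le\sqrt3\,\l$, which is where the constant $3^{p/2}$ originates. Moreover, instead of a simultaneous multi-level ladder, the argument is recursive: the bad part $b-g$ is split by the adapted, pairwise disjoint projections $e^{(1)}_{(i)}=q_i\wedge\pi_{i-1}-q_{i+1}\wedge\pi_i$ into pieces $b^{(1)}_{(i)}$ that again satisfy the hypotheses of the construction, and the whole procedure is repeated on each piece with parameter $\l^{2(k+1)}$ at stage $k$. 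The $L_2$-norm of the bad part doubles at each stage, so the $k$-th generation contributes $4^k\l^{-k(2-p)}$ to the coefficient sum; this is precisely why the hypothesis is $\l^{2-p}>4$ rather than $\l^{2-p}>1$ as your count would suggest, and it signals that your outline has no mechanism accounting for this loss. Without these two devices the proposal does not close.
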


\begin{proof}
 This proof is quite elaborate. We divide it into five steps. The idea of the double truncation by Cuculescu's projections in Steps 1 and 2 below comes from  \cite{PR2006} on the noncommutative Gundy decomposition. During the whole proof, $a$ will be a fixed $(p,2)_c$-atom with the associated projection  $e \in \M_n$ such that i) $\E_n (a) =0,$ ii) $r (a) \le e,$ and iii) $\| a \|_2 \le \tau (e)^{1/2-1/p}$. Given $\e>0$ choose an increasing sequence $(N_k)_{k\ge 1}$ of integers with $N_1>n$ such that 
 $$\Big(\sum_{k=1}^\8\big\|\E_{N_{k+1}}(a)-\E_{N_{k}}(a)\big\|_2^p\Big)^{1/p}<\e$$
 for $0<p\le 1$ and 
 $$\sum_{k=1}^\8\big\|\E_{N_{k+1}}(a)-\E_{N_{k}}(a)\big\|_2<\e$$
for $1<p<2$. Then  $\E_{N_{1}}(a)$ and $\E_{N_{k+1}}(a)-\E_{N_{k}}(a)$ for all $k\ge1$ satisfy the same properties as $a$. If the assertion holds for these operators,  it does so for $a$. Thus in the sequel we will  additionally assume that $a\in\M_N$ for some $N>n$; then the associated  martingale $(\E_k(a))_{k}$ is finite.

 \medskip\noindent{\it Step $1$}. We put $b=\tau (e)^{1/p}a.$ Then
\begin{enumerate}[$\bullet$]

\item $\E_n (b)=0$;

\item $r (b) \le e$;

\item $\| b \|^2_2 \le \tau (e).$
\end{enumerate}
Note that $s_{c, k}(b)=0$ for $k\le n$.  Fix $\lambda$ such that $\l^{2-p}>4$. We apply the construction of Cuculescu's projections to the supermartingale $(s_{c, k}^2 (b))_{k \ge n}$  and the parameter $\lambda^2$ to obtain a decreasing sequence  $(q_k)_{k\ge n}$ of projections  in $\M$ satisfying the following properties:
\begin{enumerate}[$\bullet$]

\item $q_n = e$  and $q_k \le e$ for all $k>n$;

\item $q_k \in \M_{k-1}$ for every $k>n$;

\item  $q_k$ commutes with $q_{k-1} s_{c, k}^2 (b) q_{k-1}$ for all $k>n$;

\item $q_k s_{c, k}^2 (b) q_k \le \lambda^2 q_k$ for all $k>n$; 

\item if we set $q = \bigwedge_{k \ge n} q_k,$ then $q\le e$ and
\be
\tau (e- q) \le \frac{1}{\lambda^2} \| s^2_c (b) \|_1 = \frac{1}{\lambda^2} \|b \|_2^2.
\ee
 \end{enumerate}
 It is worth to note that since $b\in\M_N$, $q_k=q_N$ for all $k\ge N$.  This remark applies to all similar constructions below.
 
 We consider the following martingale difference sequence:
\be
d y_k =0 \;\text{ for } 1\le k\le n\;\text{ and }\; d y_k = d b_k q_k\;\text{ for } k>n.
\ee
The corresponding  finite martingale $y = (y_k)_{k \ge 1}$ has the following properties:
\begin{enumerate}[$\bullet$]

\item $\E_n (y)=0$;

\item $r(y) \le e;$

\item $\| y \|^2_2 \le \| b \|^2_2;$ 
\item $\| \E_{k-1} [ |d y_k|^2 ] \|_{\infty} \le \lambda^2$ for all $k \ge 1.$

\end{enumerate}
The first three assertions are clear. The last is checked as follows:
\be\begin{split}
\E_{k-1} [ |d y_k|^2 ] = \E_{k-1} [ q_k | d b_k |^2 q_k ] \le q_k s^2_{c, k} (b) q_k
\le \lambda^2 q_k\,,\; k>n.
\end{split}\ee

Again, let $(\pi_k)_{k\ge n}$ be the sequence of Cuculescu's projections relative to the supermartingale $(s_{c, k}^2 (y))_{k \ge n},$ $\pi_n =e,$ and the parameter $\lambda^2$. Then $\pi_k \le e$ for all $k >n$; moreover, if we set $\pi = \bigwedge_{k \ge n} \pi_k,$ then $\pi \le e$ and
\be
\tau (e- \pi ) \le \frac{1}{\lambda^2} \| s^2_c (y) \|_1 \le \frac{1}{\lambda^2} \|b \|_2^2.
\ee
Letting
\be
g = \sum_{k > n} d y_k \pi_{k - 1},
\ee
we have that $g$ is a finite martingale such that
\begin{enumerate}[$\bullet$]
\item $\E_n (g)=0;$

\item $r (g) \le e;$ 

\item $\| s_c (g) \|_{\infty} \le \sqrt{3}\, \lambda.$

\end{enumerate}
Therefore, if we set
\be
a_{(0)} = \frac{1}{\sqrt{3}\, \lambda\, \tau (e)^{{1}/{p}}}\, g,
\ee
then $a_{(0)}$ is a $(p, \infty)_c$-atom with the associated projection $e \in \M_n.$

We need only to show the third assertion above. First, notice that for $k\ge n+1$
\be\begin{split}
s_{c,k}^2 (g)& = \sum^k_{j = n+1} \E_{j-1} [\pi_{j-1} |d y_j|^2 \pi_{j-1}]\\
& = \sum^k_{j = n+1} \big ( \pi_{j-1} s^2_{c,j} (y) \pi_{j-1} - \pi_{j-1} s^2_{c,j-1} (y) \pi_{j-1} \big )\\
& = \pi_k s^2_{c, k} (y) \pi_k + \sum^k_{j = n+1} \big ( \pi_{j-1} s^2_{c,j} (y) \pi_{j-1} - \pi_j s^2_{c,j} (y) \pi_j \big )\\
& = \pi_k s^2_{c, k} (y) \pi_k + \sum^k_{j = n+1} ( \pi_{j-1} - \pi_j ) s^2_{c,j} (y) ( \pi_{j-1} - \pi_j \big )\,,
\end{split}\ee
where the last equality follows from the commutativity between $\pi_j$ and $\pi_{j-1} s^2_{c,j} (y) \pi_{j-1}.$ Since $\| \E_{k-1} [|d y_k|^2] \|_{\infty} \le \lambda^2$, we have
\be\begin{split}
 \big\| s_{c, k}^2 (g) \big\|_{\infty} 
&\le \big\| \pi_k s^2_{c, k} (y) \pi_k \big \|_{\infty} + \Big \| \sum^k_{j = n} ( \pi_{j-1} - \pi_j ) \E_{j-1} [|d y_j |^2] ( \pi_{j-1} - \pi_j \big )\Big \|_{\infty}\\
& \quad + \Big \| \sum^k_{j = n} ( \pi_{j-1} - \pi_j ) s^2_{c,j-1} (y) ( \pi_{j-1} - \pi_j \big ) \Big \|_{\infty}\\
&\le \lambda^2 + \sup_{n \le j \le k} \big\| \E_{j-1} [|d y_j|^2]  \big\|_{\infty} + \lambda^2 \le 3 \lambda^2\,.
\end{split}\ee
Thus $ \| s_c (g) \|_{\infty} \le \sqrt{3}\, \lambda.$

\medskip\noindent{\it Step $2$}.   Let (with $\pi_{n-1}=e$)
\be
e^{(1)} = e - q \wedge \pi \quad \text{and} \quad e^{(1)}_{(i)} = q_i \wedge \pi_{i-1} - q_{i+1} \wedge \pi_i,\quad i \ge n.
\ee
Then $(e^{(1)}_{(i)})_{i \ge n}$ is a finite sequence of pairwise disjoint projections in $\M$ such that $e^{(1)}_{(i)}=0$ for $i>N$ and
\be
e^{(1)} = \sum_{i \ge n} e^{(1)}_{(i)}\,,\quad  e - q_k\wedge \pi_{k-1}= \sum^{k-1}_{i = n} e^{(1)}_{(i)}\;\text{ for } k >n.
\ee
Since $(e - q_k \pi_{k-1})( e - q_k\wedge \pi_{k-1})= e - q_k \pi_{k-1}$ for $k >n$, we have
\be\begin{split}
d b_k = d g_k + d b_k ( e - q_k \pi_{k-1} ) = d g_k + \sum^{k-1}_{i=n} d b_k ( e - q_k \pi_{k-1} ) e^{(1)}_{(i)}.
\end{split}\ee
Consequently, $b$ can be decomposed as
\beq\label{ortho1}
b = g + b^{(1)} = g + \sum_{i \ge n} b^{(1)}_{(i)}\,,
\eeq
where
\be
b^{(1)} = \sum_{k >n} d b_k ( e - q_k \pi_{k-1} ) 
\ee
and
\be
b^{(1)}_{(i)} = \sum_{k>i} d b_{k} (e - q_{k} \pi_{k -1}) e^{(1)}_{(i)}\,, \quad i \ge n.
\ee
 For $b^{(1)},$ we have the following properties:
\begin{enumerate}[$\bullet$]

\item $\E_n [b^{(1)}] =0$;

\item $\| b^{(1)} \|_2 \le 2 \| b \|_2$;

\item $r (b^{(1)}) \le e^{(1)}$ and
\be
\tau ( e^{(1)} ) \le \frac{2}{\lambda^2} \| b \|^2_2.
\ee
 \end{enumerate}
The last inequality follows from
\be
\tau (e^{(1)}) = \tau (e - q \wedge \pi ) \le \tau (e -q) + \tau (e- \pi) \le \frac{2}{\lambda^2} \| b \|^2_2\,.
\ee
On the other hand, every $b^{(1)}_{(i)}$ with $i\ge n$ satisfies the following properties:
\begin{enumerate}[$\bullet$]

\item $\E_i [b^{(1)}_{(i)}] =0$;

\item $e^{(1)}_{(i)} \in \M_i$ and $r (b^{(1)}_{(i)}) \le e^{(1)}_{(i)}$;

\item $ e^{(1)} = \sum_{i \ge n} e^{(1)}_{(i)}$ and
\be
\sum_{i \ge n} \tau ( e^{(1)}_{(i)}) = \tau (e^{(1)}) \le \frac{2}{\lambda^2} \| b \|^2_2\,;
\ee

\item $b^{(1)} = \sum_{i \ge n} b^{(1)}_{(i)}$;

\item $\sum_{i \ge n} \big\| b^{(1)}_{(i)}  \big\|^2_2= \big\| b^{(1)}  \big\|^2_2  \le 2 \| b \|^2_2$;

\end{enumerate}
All the assertions but the last one are clear from the construction. However, the proof of the last assertion follows immediately from the following two facts that $r (b^{(1)}_{(i)}) \le e^{(1)}_{(i)}$ for any $i \ge n,$ and $(e^{(1)}_{(i)})_{i \ge n}$ is a sequence of pairwise disjoint projections. 
 
 For the later reference, it is useful to note that the last sum  in \eqref{ortho1}  is  finite.

\medskip\noindent{\it Step $3$}. Up to now, we have completed the two first steps. In what follows, we will process these steps repeatedly. As shown above, for each $i \ge n,$ $b^{(1)}_{(i)}$ has the same properties satisfied by $b$ but with $e^{(1)}_{(i)}$ replacing $e.$  For each $i_1 \ge n,$ by repeating the above two steps applied to $(b^{(1)}_{(i_1)}\,,\,e^{(1)}_{(i_1)})$ instead of $(b,\,e)$  and the parameter $\l^4$, we find  $g_{(i_1)}, \;b^{(2)}_{(i_1)}, \;b^{(2)}_{(i_1 i_2)}, \;e^{(2)}_{(i_1)},$ and $e^{(2)}_{(i_1 i_2)} \in \M_{i_2}$ (with $i_2 \ge i_1$) such that
\be
 b^{(1)}_{(i_1)} = g_{(i_1)} +  b^{(2)}_{(i_1)} = g_{(i_1)} + \sum_{i_2 \ge i_1} b^{(2)}_{(i_1 i_2)}\,.
\ee
Using the arguments in steps 1 and 2, we see that all these operators satisfy the following properties:
\begin{enumerate}[{\rm 1)}]

\item $\E_{i_1} [g_{(i_1)}] = 0, r (g_{(i_1)}) \le e^{(1)}_{(i_1)},$ and $\|s_c (g_{(i_1)}) \|_{\infty} \le \sqrt{3}\, \l^2.$ Therefore, if
\be
a_{(i_1)} = \frac{1}{\lambda_{i_1}} \,g_{(i_1)}\quad \text{with} \quad \lambda_{i_1} = \sqrt{3} \,\l^2\, \tau (e^{(1)}_{(i_1)})^{1/p},
\ee
then $a_{(i_1)}$ is a $(p, \infty)_c$-atom with the associated projection $e^{(1)}_{(i_1)}.$

\item $(e^{(2)}_{(i_1 i_2)})_{i_2 \ge i_1 \ge n}$ is a finite family of pairwise disjoint projections such that
\be
e^{(2)}_{(i_1)} = \sum_{i_2 \ge i_1} e^{(2)}_{(i_1 i_2)}, \quad e^{(2)}_{(i_1)} \le e^{(1)}_{(i_1)},
\ee
and
\be
\sum_{i_2 \ge i_1} \tau ( e^{(2)}_{(i_1 i_2)}) = \tau (e^{(2)}_{(i_1)}) \le \frac{2}{\l^4}\, \big\| b^{(1)}_{(i_1)}\big\|^2_2\,.
\ee

\item $b^{(2)}_{(i_1)} =  \sum_{i_2 \ge i_1} b^{(2)}_{(i_1 i_2)}, \; r (b^{(2)}_{(i_1)}) \le e^{(2)}_{(i_1)}, \; \big\| b^{(2)}_{(i_1)} \big\|_2 \le 2 \big\|b^{(1)}_{(i_1)} \big\|_2$,\;
\be
 \big\| b^{(2)}_{(i_1)}  \big\|^2_2 = \sum_{i_2 \ge i_1}  \big\| b^{(2)}_{(i_1 i_2)}  \big\|^2_2\,.
\ee

\item $\E_{i_2} [b^{(2)}_{(i_1 i_2)}] =0$, and $r (b^{(2)}_{(i_1 i_2)}) \le e^{(2)}_{(i_1 i_2)}.$

\item The following decomposition holds:
\be
b = g + \sum_{i_1 \ge n} g_{(i_1)} + \sum_{i_2 \ge i_1 \ge n} b^{(2)}_{(i_1 i_2)}.
\ee
\end{enumerate}
Like \eqref{ortho1}, the two last sums above  are finite.

\medskip

Continuing this process inductively for $k \ge2$ with parameter $\l^{k+1}$,  we find  a  decomposition of $b$ into finite sums in $L_2(\M)$:
\beq\label{ortho5}
b = g + \sum_{i_1 \ge n} g_{(i_1)} + \cdots + \sum_{i_k \ge \cdots \ge i_1 \ge n} g_{(i_1 \cdots i_k)} + \sum_{i_{k+1} \ge i_k \ge \cdots \ge i_1 \ge n} b^{(k+1)}_{(i_1 \cdots i_k i_{k+1})}\,,
\eeq
as well as a finite family $\big\{e^{(k)}_{(i_1 \cdots i_k)}\in \M_{i_k}: i_k \ge \cdots \ge i_1 \ge n\big\}$ of pairwise disjoint projections in $\M$. Moreover, we have the following properties:
 \begin{enumerate}[{\rm i)}]

\item $\E_{i_k} [g_{(i_1 \cdots i_k)} ] =0,\; r (g_{(i_1 \cdots i_k)}) \le e^{(k)}_{(i_1 \cdots i_k)},$ and $\| s_c (g_{(i_1 \cdots i_k)}) \|_{\infty} \le \sqrt{3} \,\l^{k+1}.$ Hence, if
\beq\label{ortho5b}
a_{(i_1 \cdots i_k)} = \frac{1}{\lambda_{i_1 \cdots i_k}} \,g_{(i_1 \cdots i_k)}\quad \text{with} \quad \lambda_{(i_1 \cdots i_k)} = \sqrt{3} \, \l^{k+1} \tau (e^{(k)}_{(i_1 \cdots i_k)})^{1/p},
\eeq
then $a_{(i_1 \cdots i_k)}$ is a $(p, \infty)_c$-atom with the associated projection $e^{(k)}_{(i_1 \cdots i_k)}.$

\item $e^{(k+1)}_{(i_1 \cdots i_k)} = \sum_{i_{k+1} \ge i_k} e^{(k+1)}_{(i_1 \cdots i_k i_{k+1})},\; e^{(k+1)}_{(i_1 \cdots i_k)} \le e^{(k)}_{(i_1 \cdots i_k)}$ and
\beq\label{ortho6}
\sum_{i_{k+1} \ge i_k} \tau ( e^{(k+1)}_{(i_1 \cdots i_k i_{k+1})} ) = \tau ( e^{(k+1)}_{(i_1 \cdots i_k)} ) \le \frac{2}{\l^{2(k+1)}}\,   \
\big\| b^{(k)}_{(i_1\cdots i_k )}  \big\|^2_2.
\eeq

\item $b^{(k+1)}_{(i_1 \cdots i_k)} =  \sum_{i_{k+1} \ge i_k} b^{(k+1)}_{(i_1 \cdots i_k i_{k+1})},\; r (b^{(k+1)}_{(i_1\cdots i_k)}) \le e^{(k+1)}_{(i_1 \cdots i_k)},\; 
 \big\| b^{(k+1)}_{(i_1 \cdots i_k)}  \big\|_2 \le  2 \big\| b^{(k)}_{(i_1 \cdots i_k)}  \big\|_2 ,$
\beq\label{ortho6b}
  \sum_{i_{k+1} \ge i_k}  \big\| b^{(k+1)}_{(i_1 \cdots i_k i_{k+1})}  \big\|^2_2=\big\| b^{(k+1)}_{(i_1\cdots i_k)}  \big\|^2_2 
  \eeq
 and
  \beq\label{ortho6c} 
   \sum_{i_k \ge \cdots \ge i_1 \ge n} \big \| b^{(k)}_{(i_1 \cdots i_k)} \big \|^2_2 \le 4^{k} \big\| b \big\|^2_2.
\eeq

\item $\E_{i_{k+1}} [b^{(k+1)}_{(i_1 \cdots i_k i_{k+1})}] =0$ and $r (b^{(k+1)}_{(i_1 \cdots i_k i_{k+1})}) \le e^{(k+1)}_{(i_1 \cdots i_k i_{k+1})}.$
  \end{enumerate}
 
\noindent{\it Step $4$}.  In this step we show that the last sum in \eqref{ortho5} converges to zero in $L_p(\M)$ as $k\to\8$.   To that end, we first claim that
  \be\begin{split}
  \Big\| \sum_{i_{k+1} \ge i_k \ge \cdots \ge i_1 \ge n} b^{(k+1)}_{(i_1 \cdots i_k i_{k+1})} \Big \|_p
 \le\Big( \sum_{i_{k+1} \ge i_k \ge \cdots \ge i_1 \ge n} \big\|b^{(k+1)}_{(i_1 \cdots i_k i_{k+1})} \big \|_p^p\Big)^{1/p}\,.
 \end{split}\ee
 This is  just the $p$-norm inequality  for $p\le 1$. On the other hand, since the right supports of the $b^{(k+1)}_{(i_1 \cdots i_k i_{k+1})}$'s are pairwise disjoint, the sum on the left hand side is 1-unconditional; thus by the type $p$ property of $L_p(\M)$ we deduce the claim  for $1<p<2$.
 
 Let $1/r=1/p-1/2$. Then by H\"older's inequality, we have
 \be\begin{split}
\Big( \sum_{i_{k+1} \ge i_k \ge \cdots \ge i_1 \ge n} \big\|b^{(k+1)}_{(i_1 \cdots i_k i_{k+1})} \big \|_p^p\Big)^{1/p}
 \le &\Big( \sum_{i_{k+1} \ge i_k \ge \cdots \ge i_1 \ge n} \big\|b^{(k+1)}_{(i_1 \cdots i_k i_{k+1})} \big \|_2^2\Big)^{1/2}\\
&\cdot \Big( \sum_{i_{k+1} \ge i_k \ge \cdots \ge i_1 \ge n} \tau\big(e^{(k+1)}_{(i_1 \cdots i_k i_{k+1})}\big)\Big)^{1/r}\,.
 \end{split}\ee
   However, by \eqref{ortho6} 
     \be\begin{split}
    \sum_{i_{k+1} \ge i_k \ge \cdots \ge i_1 \ge n} \tau\big(e^{(k+1)}_{(i_1 \cdots i_k i_{k+1})}\big)
   \le \frac{2}{\l^{2(k+1)}}\,  \sum_{i_k \ge \cdots \ge i_1 \ge n} \big\| b^{(k)}_{(i_1\cdots i_k )} \big\|^2_2
     \end{split}\ee
 and by \eqref{ortho6b} 
   \be\begin{split}
 \sum_{i_{k+1} \ge i_k \ge \cdots \ge i_1 \ge n} \big\|b^{(k+1)}_{(i_1 \cdots i_k i_{k+1})} \big \|_2^2
 &= \sum_{i_k \ge \cdots \ge i_1 \ge n} \big\|b^{(k+1)}_{(i_1 \cdots i_k)} \big \|_2^2\\
 &\le 4 \sum_{i_k \ge \cdots \ge i_1 \ge n} \big\|b^{(k)}_{(i_1 \cdots i_k)} \big \|_2^2\,.
    \end{split}\ee
 Combining the previous inequalities with \eqref{ortho6c}, we get
  \[
  \Big\| \sum_{i_{k+1} \ge i_k \ge \cdots \ge i_1 \ge n} b^{(k+1)}_{(i_1 \cdots i_k i_{k+1})} \Big \|_p
  \le 2^{1/p+1/2}\l^{1-2/p}\, 4^{k/p}\l^{k(1-2/p)}\,\|b\|_2^{2/p}\,.
  \]
 Recalling that $\l^{2-p}>4$, we deduce 
 \[
\lim_{k\to\8}\Big\| \sum_{i_{k+1} \ge i_k \ge \cdots \ge i_1 \ge n} b^{(k+1)}_{(i_1 \cdots i_k i_{k+1})} \Big \|_p=0,
\]
  as desired.
 
 \medskip\noindent{\it Step $5$}. We are now in a position to end the proof of the theorem. Define
  $$\lambda_{(0)} = \sqrt{3} \,\l\, \tau (e)^{1/p}\;\text{ and }\; a_{(0)} = \frac{1}{\lambda_{(0)}} \,g.$$
 Letting $k\to\8$ in \eqref{ortho5} and using \eqref {ortho5b}, we conclude that
\be\begin{split}
 b 
 &= g + \sum_{i_1 \ge n} g_{(i_1)} + \cdots + \sum_{i_k \ge \cdots \ge i_1 \ge n} g_{(i_1 \cdots i_k)} + \cdots\\
&=\lambda_{(0)} a_{(0)} +  \sum^{\infty}_{k=1}\, \sum_{i_k \ge \cdots \ge i_1 \ge n} \lambda_{(i_1 \cdots i_k)} a_{(i_1 \cdots i_k)}\\
&=\sum^{\infty}_{k=0}\, \sum_{i_k \ge \cdots \ge i_1 \ge n} \lambda_{(i_1 \cdots i_k)} a_{(i_1 \cdots i_k)}\,.
\end{split}\ee
holds in $L_2 (\M)$. By \eqref{ortho5b}, \eqref{ortho6} and \eqref{ortho6b}, one has
\be\begin{split}
\sum^{\infty}_{k=1}\, \sum_{i_k \ge \cdots \ge i_1 \ge n} |\lambda_{(i_1 \cdots i_k)}|^p 
 &= 3^{p/2} \sum^{\infty}_{k=1} \l^{p(k+1)} \sum_{i_k \ge \cdots \ge i_1 \ge n} \tau (e^{(k)}_{(i_1 \cdots i_k)})\\
 &= 3^{p/2} \sum^{\infty}_{k=1}\l^{p(k+1)} \sum_{i_{k-1} \ge \cdots \ge i_1 \ge n} \tau (e^{(k)}_{(i_1 \cdots i_{k-1})})\\
&\le2 \cdot 3^{p/2} \l^p\, \sum^{\infty}_{k=1} \frac{1}{\l^{k(2-p)}} \,\sum_{i_{k-1} \ge \cdots \ge i_1 \ge n} \| b^{(k-1)}_{(i_1 \cdots i_{k-1})} \|^2_2\\
 &\le \frac{3^{p/2} \l^p}2\, \| b \|^2_2\, \sum^{\infty}_{k=1} \frac{4^k}{\l^{k(2-p)}}\\
  &\le\frac{2\cdot 3^{p/2} \l^p}{\l^{2-p}-4}\,\tau (e)\,.
\end{split}\ee
Thus
\be\begin{split}
\sum^{\infty}_{k=0}\, \sum_{i_k \ge \cdots \ge i_1 \ge n} |\lambda_{(i_1 \cdots i_k)}|^p 
\le 3^{p/2}\,\frac{\l^2-2\l^p}{\l^{2-p}-4}\,\tau (e)\,.
\end{split}\ee
Finally, we get the desired decomposition of $a$ into $(p, \infty)_c$-atoms:
\be
a = \tau (e)^{-1/p} b = \sum^{\infty}_{k=0}\, \sum_{i_k \ge \cdots \ge i_1 \ge n} \tau (e)^{-1/p} \,\lambda_{(i_1 \cdots i_k)} a_{(i_1 \cdots i_k)} 
\ee
such that
\be
\sum^{\infty}_{k=0}\, \sum_{i_k \ge \cdots \ge i_1 \ge n} \big|\tau (e)^{-1}\,\lambda_{(i_1 \cdots i_k)}\big|^p
 \le3^{p/2}\,\frac{\l^2-2\l^p}{\l^{2-p}-4}\,.
\ee
This completes the proof.
\end{proof}

Combining Theorem \ref{th:infty-atomicDecomp} with \cite[Theorem 2.4]{Bekjan-Chen-Perrin-Y} (cf. Corollary \ref{atom-1}) yields the following corollary, which improves the corresponding result of \cite{Hong-Mei}.

\begin{corollary}\label{atom-infty}
We have
\[
\h_1^c(\M)=\h_{1, \at_\8}^c(\M)
\]
with equivalent norms. 
 \end{corollary}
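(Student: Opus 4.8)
The plan is to obtain this identity directly by \emph{chaining} two results already proved above and then tracking constants. The two inputs are: Corollary~\ref{atom-1} (equivalently \cite[Theorem~2.4]{Bekjan-Chen-Perrin-Y}), which gives $\h_1^c(\M)=\h_{1,\at}^c(\M)$ with $\|x\|_{\h_1^c}\le\|x\|_{\h_{1,\at}^c}\le\sqrt2\,\|x\|_{\h_1^c}$; and Theorem~\ref{th:infty-atomicDecomp} at $p=1$, which furnishes a decomposition of each $(1,2)_c$-atom into $(1,\8)_c$-atoms. So it suffices to establish $\h_{1,\at}^c(\M)=\h_{1,\at_\8}^c(\M)$ with a two-sided norm control, after which $\|x\|_{\h_1^c}\le\|x\|_{\h_{1,\at_\8}^c}\le C'\,\|x\|_{\h_1^c}$ for a suitable constant $C'$.

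The easy inclusion: by \eqref{inclusion-infty} (or the remark following Definition~\ref{df:pq-atom}) every $(1,\8)_c$-atom is a $(1,2)_c$-atom. Indeed, if $a$ has defining projection $e\in\M_n$ with $\E_n(a)=0$ and $r(a)\le e$, then $da_k=0$ for $k\le n$ and $da_k=da_k\,e$ for $k>n$, so $s_c(a)=e\,s_c(a)\,e$, and Hölder's inequality promotes $\|s_c(a)\|_\8\le\tau(e)^{-1}$ to $\|a\|_2=\|s_c(a)\|_2\le\tau(e)^{-1/2}$. Since the unit-ball elements of $L_1(\M_1)$ are common building blocks for $\h_{1,\at}^c(\M)$ and $\h_{1,\at_\8}^c(\M)$, any $(1,\8)_c$-atomic decomposition of $x$ is verbatim a $(1,2)_c$-atomic decomposition with the same coefficients; hence $\|x\|_{\h_{1,\at}^c}\le\|x\|_{\h_{1,\at_\8}^c}$.

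The main inclusion: given $x\in\h_{1,\at}^c(\M)$, pick a representation $x=\sum_k\lambda_k a_k$ with $\sum_k|\lambda_k|<\8$, where each $a_k$ is a $(1,2)_c$-atom or lies in the unit ball of $L_1(\M_1)$. Apply Theorem~\ref{th:infty-atomicDecomp} with $p=1$ to each $(1,2)_c$-atom $a_k$, writing $a_k=\sum_j\mu_{k,j}a_{k,j}$ with each $a_{k,j}$ a $(1,\8)_c$-atom and $\sum_j|\mu_{k,j}|\le C:=\inf_{\l>4}\sqrt3\,\dfrac{\l^2-2\l}{\l-4}$; substitute these expansions into the decomposition of $x$ and leave the $L_1(\M_1)$-terms untouched. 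Every $(1,2)_c$-atom $b$ with defining projection $e$ satisfies $\|b\|_1=\|be\|_1\le\|b\|_2\,\tau(e)^{1/2}\le1$ by Hölder's inequality and the size condition on $b$, so $\sum_k\sum_j|\lambda_k\mu_{k,j}|\,\|a_{k,j}\|_1\le C\sum_k|\lambda_k|<\8$; hence the iterated series converges absolutely in the Banach space $L_1(\M)$ and, after rearrangement, sums to $x$. This exhibits a $(1,\8)_c$-atomic decomposition of $x$ whose coefficient sequence has $\ell_1$-norm at most $C\sum_k|\lambda_k|$; taking the infimum over representations of $x$ gives $\|x\|_{\h_{1,\at_\8}^c}\le C\,\|x\|_{\h_{1,\at}^c}$. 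Combining with the easy inclusion and Corollary~\ref{atom-1}, $\|x\|_{\h_1^c}\le\|x\|_{\h_{1,\at_\8}^c}\le\sqrt2\,C\,\|x\|_{\h_1^c}$, which is the claimed equivalence of norms.

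\textbf{Main obstacle.} Since every step merely recycles results already in hand, there is no genuine difficulty; the only point deserving care is the convergence and rearrangement of the double sum $\sum_{k,j}\lambda_k\mu_{k,j}a_{k,j}$ in $L_1(\M)$, which is settled by the uniform bound $\|a_{k,j}\|_1\le1$ together with $\sum_k|\lambda_k|<\8$ and $\sup_k\sum_j|\mu_{k,j}|\le C$.
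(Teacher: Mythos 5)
Your proposal is correct and is essentially the paper's own argument: the paper proves this corollary in one line by combining Theorem~\ref{th:infty-atomicDecomp} with the atomic decomposition $\h_1^c(\M)=\h_{1,\at}^c(\M)$ of Corollary~\ref{atom-1} (i.e.\ \cite[Theorem~2.4]{Bekjan-Chen-Perrin-Y}), exactly as you do. Your verification of the easy inclusion via $\|s_c(a)\|_2\le\|s_c(a)\|_\8\,\T(e)^{1/2}$, and of the absolute convergence of the double sum via $\|a_{k,j}\|_1\le1$, correctly fills in the details the paper leaves implicit.
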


We also have the following atomic decomposition of $\h_p^c(\M)$ for $1<p<2$ 

\begin{corollary}
Let $1<p<2$. Then any $x \in \h_p^c(\M)$ admits a decomposition of the form
 $$x=\sum_{k=1}^\8 \l_k a_k\,,$$
where for each $k,$ $a_k$ is a $(p, \8)_c$-atom or an element in the unit ball of $L_p (\M_1),$ and $\lambda_k \in \mathbb{C}$ satisfying 
 $$\sum_k |\lambda_k|^p \le C_p\|x\|_{\h_p^c}^p\,,$$
where $C_p$ is a positive constant depending only on $p$.
   \end{corollary}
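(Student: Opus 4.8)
The plan is to feed the weak atomic decomposition of Corollary~\ref{weak-decomp} into the $(p,\8)_c$-atom decomposition of Theorem~\ref{th:infty-atomicDecomp}; both are already available for the whole range $0<p<2$, so the only real work is the bookkeeping of convergence and of the $\el_p$-norm of the coefficients. Applying Corollary~\ref{weak-decomp} to $x\in\h_p^c(\M)$, and noting that $\max\{1,p\}=p$ for $1<p<2$, we obtain a decomposition
\[
x=x_1+\sum_{l\ge1}\l_l a_l,\qquad x_1\in L_p(\M_1),\quad a_l\ \text{a}\ (p,2)_c\text{-atom},
\]
with the series converging in $\h_p^c(\M)$ and $\|x_1\|_p^p+\sum_{l\ge1}|\l_l|^p\le(2/p)^{p/2}\|x\|_{\h_p^c}^p$. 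Write $x_1=\|x_1\|_p\,\widetilde x_1$ with $\widetilde x_1$ in the unit ball of $L_p(\M_1)$ (dropping this term when $x_1=0$); it then remains to split each $(p,2)_c$-atom $a_l$ into $(p,\8)_c$-atoms.

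By Theorem~\ref{th:infty-atomicDecomp} (applied with any admissible value of its parameter, so that the resulting constant depends only on $p$), there is $K_p>0$ such that each $a_l$ admits a decomposition $a_l=\sum_{m\ge1}\l_{l,m}b_{l,m}$, converging in $L_p(\M)$, with each $b_{l,m}$ a $(p,\8)_c$-atom and $\sum_{m\ge1}|\l_{l,m}|^p\le K_p$. Substituting gives
\[
x=\|x_1\|_p\,\widetilde x_1+\sum_{l\ge1}\sum_{m\ge1}(\l_l\l_{l,m})\,b_{l,m},
\]
and since the $l$- and $m$-summations decouple,
\[
\|x_1\|_p^p+\sum_{l,m\ge1}|\l_l\l_{l,m}|^p=\|x_1\|_p^p+\sum_{l\ge1}|\l_l|^p\sum_{m\ge1}|\l_{l,m}|^p\le\max\{1,K_p\}\Big(\|x_1\|_p^p+\sum_{l\ge1}|\l_l|^p\Big)\le C_p\|x\|_{\h_p^c}^p,
\]
with $C_p:=\max\{1,K_p\}\,(2/p)^{p/2}$, a constant depending only on $p$. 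Enumerating the countable family $\{b_{l,m}\}$ together with $\widetilde x_1$ as a single sequence $(a_k)_k$ with the corresponding coefficients $\l_k$ then yields a decomposition of the asserted form; since no atom is added or discarded in the enumeration, the displayed bound is exactly $\sum_k|\l_k|^p\le C_p\|x\|_{\h_p^c}^p$.

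I expect the main obstacle to be showing that this iterated series genuinely converges to $x$ in $L_p(\M)$, the family being doubly indexed. The outer series $\sum_l\l_l a_l$ converges in $\h_p^c(\M)$, hence in $L_p(\M)$ by Corollary~\ref{Best}, with an explicit rate. For the inner series I would exploit the internal structure of the construction in Theorem~\ref{th:infty-atomicDecomp}: its $(p,\8)_c$-atoms are organized by a ``depth'', and at a fixed depth their right supports are pairwise disjoint, so for any coefficients a partial sum over a subset of a single depth is obtained from the full depth-$k$ sum by right multiplication by a subprojection and is therefore dominated by it in $L_p$-norm; moreover the full depth-$k$ sums tend to $0$ in $L_p(\M)$ as $k\to\8$, which is precisely what Step~4 of that proof establishes. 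Consequently each inner decomposition, read off depth by depth, has \emph{all} of its partial sums converging to $a_l$, with a modulus that becomes arbitrarily small upon truncating at a sufficiently large depth. Combining this uniform control over the blocks with the $L_p$-rate of the outer series and with $|\l_l|\to0$, one builds a diagonal enumeration — at stage $N$ appending the atoms of $a_l$ at depths in a suitably chosen window $(D_{N-1},D_N]$ for $l<N$ and those of $a_N$ up to depth $D_N$ — whose partial sums all converge to $x$ in $L_p(\M)$. Aside from this convergence argument, the statement is an immediate combination of Corollary~\ref{weak-decomp}, Theorem~\ref{th:infty-atomicDecomp} and Corollary~\ref{Best}.
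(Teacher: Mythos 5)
Your proposal is correct and follows essentially the same route as the paper, whose proof is the one-line observation that the statement follows by chaining Theorem~\ref{main}, Remark~\ref{alg-crude} and Proposition~\ref{crude} (i.e.\ exactly Corollary~\ref{weak-decomp}) with Theorem~\ref{th:infty-atomicDecomp}. The only difference is that you spell out the $L_p$-convergence of the resulting doubly indexed series, which the paper leaves implicit; your use of the depth structure and the disjoint right supports from Step~2--4 of Theorem~\ref{th:infty-atomicDecomp} to control arbitrary partial sums is a valid way to justify it.
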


\begin{proof} 
This immediately follows from Theorem~\ref{main}, Remark~\ref{alg-crude}, Proposition~\ref{crude}, and Theorem~\ref{th:infty-atomicDecomp}.
\end{proof}


\section{Applications}\label{Applications}


We give some applications of the previous results.

\subsection{Dual space of $\mathbf{\h_p^c(\M)}$ when $\mathbf{0<p<1}$}
In this subsection, we will discuss a problem raised in \cite{Bekjan-Chen-Perrin-Y} about the characterization of  the dual space of the Hardy space $\h_p^c(\M)$  when $0<p<1$.  Recall that for $1\leq p <\infty$, the dual spaces of the Banach spaces $\h_p^c(\M)$ and $\h_p(\M)$ are well-understood. We refer to \cite{JX,Perrin} for details. For $0<p<1$, a description of the dual space of $\h_p^c(\M)$  was provided in \cite[Theorem~3.3]{Bekjan-Chen-Perrin-Y}. However, using the  commutative setting as a guide (see \cite[Theorem~2.24]{Weisz2}), it is  desirable to have a description  of such  dual space as Lipschitz space. We explore this below. First, we review the noncommutative Lipschitz space and discuss its connection with atomic decomposition.

For $\b\ge 0$, we recall  the column Lipschitz space of order $\b$  defined by
 \[
 \La_\b^c(\M)=\left\{x\in L_2(\M)\;:\; \|x\|_{ \La_\b^c}<\infty\right\},
 \]
where
\[
 \|x\|_{ \La_\b^c}=\max\Big\{\|x_1\|_\infty,\;\; \sup_{n\ge1}\sup_{e\in\mathcal{P}_n}\frac{\|(x-x_n)e\|_2}{\T(e)^{\b+1/2}}\Big\}
 \]
 with  $\mathcal{P}_n$ denotes the lattice projections of $\M_n$.
Note that when $\b=0$,  we recover the column \lq\lq little" $\mathsf{bmo}$-space $\mathsf{bmo}^c(\M)$ (see \cite{Bekjan-Chen-Perrin-Y}). Motivated by the noncommutative John-Nirenberg inequality of \cite{Hong-Mei}  and the atomic decomposition in the previous sections, we introduce the following more general Lipschitz spaces. 

Let additionally $1\leq \gamma < \infty$ . Define 
 \[
 \La_{\b,\g}^c(\M)=\left\{x\in L_2(\M)\;:\; \|x\|_{ \La_{\b,\g}^c}<\infty\right\},
 \]
where
\[
 \|x\|_{ \La_{\b,\g}^c}=\max\Big\{\|x_1\|_\infty,\;\; \sup_{n\ge1}\sup_{e\in\mathcal{P}_n}\frac{\|(x-x_n)e\|_{\h_\g^c}}{\T(e)^{\b+1/\g}}\Big\}\,.
 \]
Note that  $\La_{\b,2}^c(\M)=\La_\b^c(\M)$. It easily follows from H\"older's inequality that if $1\leq \g_1 <\g_2<\infty$ then 
 $\La_{\b,\g_2}^c(\M) \subseteq \La_{\b,\g_1}^c(\M)$ with the inclusion being contractive. 
We will show that the reverse inclusion holds too, so $ \La_{\b,\g}^c(\M)$ is independent of $\g$ (see Corollary~\ref{independence} below)

We also define the subspace:
\[
\La_{\b,\g}^{0,c}(\M)=\Big\{x \in \La_{\b,\g}^c(\M): \E_1(x)=0\Big\}.
\]

Recall that $\h_{p,\,\at_q}^{c}(\M)$ with $0<p\leq 1$ and $1<q<\infty$ is the atomic space defined at the beginning of the previous section. Let $\h_{p,\,\at_q}^{0,c}(\M)$ be its subspace of all $x$ with $\E_1(x)=0$. In the following, $q'$ denotes the conjugate index of $q$.

\begin{proposition}\label{lemma-L}
Let $0<p\leq 1$, $1<q<\infty$, and $\b=1/p-1$. Then 
 \[
 \big(\h_{p,\,\at_q}^{0,c}(\M)\big)^*=\La_{\b,q'}^{0,c}(\M)  \quad \text{with equivalent norms}.
 \]
\end{proposition}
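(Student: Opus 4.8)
The plan is to identify the dual of $\h_{p,\,\at_q}^{0,c}(\M)$ by the standard atomic-duality argument, using the $(p,q)_c$-atoms as a dense ``spanning set'' of simple building blocks. First I would show that every $\varphi\in\big(\h_{p,\,\at_q}^{0,c}(\M)\big)^*$ is represented by an element of $\La_{\b,q'}^{0,c}(\M)$. Since $L_2^0(\M)$ embeds (via martingales) densely in $\h_{p,\,\at_q}^{0,c}(\M)$ with $\|x\|_{\h_{p,\,\at_q}^c}\le\|x\|_2$ on a suitable normalization — more precisely, for $x\in L_2^0(\M_m)$ one checks directly that $x$ is (a multiple of) a $(p,q)_c$-atom with projection $e=\mathbf 1\in\M_m$, giving $\|x\|_{\h_{p,\,\at_q}^c}\le\|x\|_{\h_q^c}\le\|x\|_q$ — the functional $\varphi$ restricts to a bounded functional on $L_2^0(\M)$ in the $L_q$ (hence $L_2$) topology on each finite level, and so is given by $y\mapsto\T(x^*y)$ for a unique $x\in L_2(\M)$ with $\E_1(x)=0$. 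The content is the norm estimate: testing $\varphi$ against a single $(p,q)_c$-atom $a$ with associated projection $e\in\mathcal P_n$ and the relation $\|a\|_{\h_q^c}\le\T(e)^{1/q-1/p}$ forces, after writing $a$ in terms of $(x-x_n)e$ and using the duality $\big(\h_q^c\big)^*=\h_{q'}^c$ (scaled on $e\M e$), the bound $\|(x-x_n)e\|_{\h_{q'}^c}\le\|\varphi\|\,\T(e)^{\b+1/q'}$; taking the supremum over $n$ and $e\in\mathcal P_n$ yields $\|x\|_{\La_{\b,q'}^c}\le\|\varphi\|$. One also must produce atoms realizing $\E_1$-type tests, but on the subspace with $\E_1(x)=0$ the first coordinate of the Lipschitz norm, $\|x_1\|_\infty$, drops out, which is why the statement is phrased for the $0$-subspaces.

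\smallskip

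Conversely, given $x\in\La_{\b,q'}^{0,c}(\M)$, I would define $\varphi(y)=\T(x^*y)$ first for $y\in L_2^0(\M)$ and show $|\varphi(y)|\le C\,\|x\|_{\La_{\b,q'}^c}\,\|y\|_{\h_{p,\,\at_q}^c}$. It suffices to estimate $|\T(x^*a)|$ for a single $(p,q)_c$-atom $a$: since $\E_n(a)=0$ and $r(a)\le e\in\M_n$, the trace orthogonality $\T(x^*a)=\T\big((x-x_n)^*a\big)=\T\big((e(x-x_n))^*a\big)$ holds by the module property of $\E_n$, and then H\"older's inequality in the conditioned Hardy-space duality $\big(\h_q^c(e\M e)\big)^*=\h_{q'}^c(e\M e)$ gives
\[
|\T(x^*a)|\le \|(x-x_n)e\|_{\h_{q'}^c}\,\|a\|_{\h_q^c}\le \|x\|_{\La_{\b,q'}^c}\,\T(e)^{\b+1/q'}\cdot\T(e)^{1/q-1/p}=\|x\|_{\La_{\b,q'}^c},
\]
since $\b+1/q'+1/q-1/p=\b+1-1/p=0$. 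For a general atomic decomposition $y=\sum_k\lambda_k a_k$ the $p$-subadditivity $|\varphi(y)|^p\le\sum_k|\lambda_k|^p|\varphi(a_k)|^p$ combined with the previous line yields $|\varphi(y)|\le\|x\|_{\La_{\b,q'}^c}\big(\sum_k|\lambda_k|^p\big)^{1/p}$, whence $\|\varphi\|\le\|x\|_{\La_{\b,q'}^c}$ after taking the infimum over decompositions. Density of $L_2^0(\M)$ in $\h_{p,\,\at_q}^{0,c}(\M)$ then extends $\varphi$ to the whole space.

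\smallskip

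\textbf{Main obstacle.} The delicate point is the ``cut-down'' duality $\big(\h_q^c(\mathcal N)\big)^*=\h_{q'}^c(\mathcal N)$ applied on the reduced von Neumann algebra $\mathcal N=e\M e$ with the renormalized trace $\T(e)^{-1}\T$, and the precise bookkeeping of the scaling factors $\T(e)^{1/q-1/p}$, $\T(e)^{\b+1/q'}$ so that the powers cancel exactly via $\b=1/p-1$. One must make sure the conditioned square function $s_c$ behaves correctly under cutting by the fixed projection $e\in\M_n$ — essentially that $s_c((x-x_n)e)$ computed in the filtration $(\M_m)_{m\ge n}$ agrees, up to the support $e$, with what the Lipschitz norm measures — and that the conditional-expectation orthogonality used to replace $x$ by $(x-x_n)e$ is valid; these are routine given the known $\h_q^c$-$\h_{q'}^c$ duality of \cite{JX,Perrin} but require care in the quasi-Banach range. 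The identification of the representing operator $x$ as genuinely lying in $L_2(\M)$ (rather than merely a densely-defined form) is immediate from boundedness on $L_2^0(\M_m)$ and a weak-$*$ limit.
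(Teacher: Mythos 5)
Your proposal is correct and follows essentially the same route as the paper: both directions rest on testing against individual $(p,q)_c$-atoms, the Junge--Xu duality $(\h_q^c)^*=\h_{q'}^c$, the exponent cancellation $\b+1/q'+1/q-1/p=0$, and representing a functional on the dense subspace $L_2^0(\M)$ by an $L_2$ operator before normalizing a dual norming element into an atom. The only cosmetic difference is that you phrase the duality as a cut-down to $e\M e$ with renormalized trace, whereas the paper applies the global duality on $\M$ directly to $(x-x_n)e$; this changes nothing of substance.
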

\begin{proof}
 First note that by Theorem~\ref{th:infty-atomicDecomp},   $L_2^0(\M) \subset \h_{p,\,\at_q}^{0,c}(\M)$; moreover, it is easy to see that  $L_2^0(\M)$ is dense in $\h_{p,\,\at_q}^{0,c}(\M)$.
 
 We start to show the inclusion $\La_{\b,q'}^{0,c}(\M) \subseteq(\h_{p,\,\at_q}^{0,c}(\M))^*$.
Let $x \in \La_{\b,q'}^{0,c}(\M)$. If $a$ is a $(p,q)_c$-atom  with  $\E_n(a)=0$ for some  $n\geq 1$ and  $a=ae$ for some projection $e\in \M_n$ satisfying  $\|a\|_{\h_q^c}\leq \T(e)^{1/q-1/p}$, then using the isomorphism  $(\h_q^c(\M))^*= \h_{q'}^c(\M)$ proved in \cite{JX}, we have
\begin{align*}
\big| \T(x^*a)\big| &= \big| \T\big( (x-x_n)^*ae\big)\big|\\
&\leq C_q \big\| (x-x_n)e\big\|_{\h_{q'}^c} \|a\|_{\h_q^c}\\
&\leq C_q \big\|(x-x_n)e\big\|_{\h_{q'}^c} \T(e)^{1/q-1/p}\\
&=C_q\big\|(x-x_n)e\big\|_{\h_{q'}^c} \T(e)^{-\beta-1/q'}\\
&\leq  C_q \|x\|_{\La_{\b,q'}^{0,c}}.
\end{align*}
Thus,  for every $y \in L_2^0(\M)$, the following inequality holds:
\[
\big|\T\big(x^*y\big) \big| \leq C_q \big\|x \big\|_{\La_{\b,q'}^{0,c}} \big\|y\big\|_{\h_{p,\,\at_q}^{0,c}}.
\]
Hence,  the map $\phi_x: y\mapsto  \T(x^*y)$ extends to a continuous functional on $\h_{p,\,\at_q}^{0,c}(\M)$ with norm less than or equal to $C_q\|x\|_{\La_{\b, q'}^{0,c}}$.

Conversely,  let $\phi \in (\h_{p,\,\at_q}^{0,c}(\M))^*$. Since $L_2^0(\M) \subset \h_{p,\,\at_q}^{0,c}(\M)$, there exists $x \in L_2^0(\M)$ such that
\[
\phi(y) =\T(x^*y), \quad y\in L_2^0(\M).
\]
Fix $n\geq 1$ and $e \in \cal{P}_n$.  By duality, we may choose  $y \in \h_q^c(\M)$  with $\|y\|_{h_q^c} \leq C_q'$ so that 
\[
\T(e(x-x_n)^*y) = \big\| (x-x_n)e\big\|_{\h_{q'}^c}.
\]
Clearly, we may assume that $\E_n(y)=0$ and $ye=y$.
Set
\[
 a=\frac{y}{\|y\|_{\h_q^c} \T(e)^{1/p-1/q}}.
\]
Then $a$ is a $(p,q)_c$-atom and
\begin{align*}
\|\phi\| &\geq \big|\T\big( (x-x_n)^*a\big)\big| \\
&= \frac{1}{\|y\|_{\h_q^c}\T(e)^{1/p-1/q}} \T\big( e(x-x_n)^*y\big)\\
&\geq {C_q'}^{-1} \frac{1}{\T(e)^{\b +1/q'}} \big\| (x-x_n)e\big\|_{\h_{q'}^c}
\end{align*}
Taking supremum over $n$ and $e \in \cal{P}_n$, we get $\|\phi\| \geq  C_p'^{-1}\|x\|_{\La_{\b,q'}^{0,c}}$.
\end{proof}

\begin{remark}
 For the special case  $\g=2$,  we have 
 \[
 \big(\h_{p,\,\at}^{0,c}(\M)\big)^*=\La_{\b}^{0,c}(\M)  \quad \text{isometrically}.
 \]
 On the other hand, using the duality $(\h_1^c(\M))^*=\mathsf{bmo}^c(\M)$, we also have
\[
( \h_{p,\mathsf{bmo}}^{0,c})^*= \La_{\b, 1}^{0,c}(\M) \quad \text{with equivalent norm}.
\]
 \end{remark} 

\begin{remark}\label{remark-trivial1}
In general,  one cannot state  Proposition~\ref{lemma-L} for the quasi-Banach space $\h_{p,\at}^c(\M)$ when $0<p<1$.  This is the case since $L_p(\M_1)$ is a complemented subspace of $\h_{p,\at}^c(\M)$ and  $L_p(\M_1)$
has trivial dual if $\M_1$ is not atomic (cf. \cite{Watanabe}).  On the other hand, if $\M_1$ is a type~I atomic von Neumann algebra,  then we  have $(L_p(\M_1))^*$ is isometric to $\M_1$ and therefore we may state  that for $0<p<1$,
\begin{equation}\label{full-dual-1}
 \big(\h_{p,\at}^{c}(\M)\big)^*=\La_\b^{c}(\M)  \quad \text{isometrically}.
\end{equation}
\end{remark}

 It is a natural question to ask if the same  statement holds for $\h_p^c(\M)$.  A positive answer for the particular case of noncommutative  dyadic-martingales  was obtained  recently in \cite[Theorem~1.2]{Jiao-Zhou-Lian-Zanin}.
Our  aim is to show that the weaker form of atomic decomposition stated in Corollary~\ref{weak-decomp} is sufficient  to answer this question positively.  The following result extends   \cite[Theorem~2.6]{Bekjan-Chen-Perrin-Y} to the full range $0<p\leq 1$ and thereby  solves \cite[Problem~4]{Bekjan-Chen-Perrin-Y}.

  \begin{theorem}\label{dual}
  Let $0<p\leq 1$ and $\b=p^{-1}-1$. Then
 \[
 \big(\h_p^{0,c}(\M)\big)^*=\La_\b^{0,c}(\M)  \quad \text{with equivalent norms}.
 \]
 More precisely,  if $y \in \La_\b^{0,c}(\M)$, the map defined by
 \[
 \phi_y: x \mapsto \T(y^*x), \quad x \in L_2^0(\M)
 \]
 extends to a  bounded linear functional on $\h_p^{0,c}(\M)$ satisfying the inequality:
 \[
 \big\| \phi_y \big\|_{(\h_p^{0,c})^*} \leq  \sqrt{2/p}\, \big\|y \big\|_{\La_\b^{0,c}}.
 \]
 Conversely, any  $ \varphi \in \big(\h_p^{0,c}(\M)\big)^*$ is given by the above formula for some $y \in \La_\b^{0,c}(\M)$ satisfying:
 \[
\big\|y \big\|_{\La_\b^{0,c}} \leq  \big\| \phi\big\|_{(\h_p^{0,c})^*}.
 \]
 \end{theorem}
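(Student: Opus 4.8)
The plan is to prove the two norm inequalities separately, using the weak atomic decomposition of Corollary~\ref{weak-decomp} to produce the functional attached to a Lipschitz element, and testing against a single optimally chosen $(p,2)_c$-atom for the converse.

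\emph{From $\La_\b^{0,c}(\M)$ to $(\h_p^{0,c}(\M))^*$.} The heart of the matter is a pointwise pairing estimate for atoms. If $a$ is a $(p,2)_c$-atom with associated projection $e\in\M_n$ and $y\in\La_\b^{0,c}(\M)$, then $\E_n(a)=0$ and $a=ae$ give $\T(y^*a)=\T\big(((y-y_n)e)^*a\big)$, so the Cauchy--Schwarz inequality together with $\b=p^{-1}-1$ yields
\[
\big|\T(y^*a)\big|\le\big\|(y-y_n)e\big\|_2\,\|a\|_2\le\T(e)^{\b+1/2}\,\|y\|_{\La_\b^c}\cdot\T(e)^{1/2-1/p}=\|y\|_{\La_\b^c}.
\]
For $x\in L_2^0(\M)$ I would then pass to the finite martingale truncations $\E_k(x)$ (using $\|\E_k(x)\|_{\h_p^c}\le\|x\|_{\h_p^c}$ and $\T(y^*\E_k(x))\to\T(y^*x)$), decompose $\E_k(x)=\sum_l\lambda_l a_l$ into $(p,2)_c$-atoms via Corollary~\ref{weak-decomp} with $\sum_l|\lambda_l|\le\sqrt{2/p}\,\|x\|_{\h_p^c}$ (the $L_p(\M_1)$-component vanishes since $\E_1(x)=0$), sum the pairing estimate over $l$, and let $k\to\8$ to get $|\T(y^*x)|\le\sqrt{2/p}\,\|y\|_{\La_\b^c}\|x\|_{\h_p^c}$. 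Since $L_2^0(\M)$ is dense in $\h_p^{0,c}(\M)$, this produces the bounded extension $\phi_y$ with norm at most $\sqrt{2/p}\,\|y\|_{\La_\b^c}$.

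\emph{From $(\h_p^{0,c}(\M))^*$ to $\La_\b^{0,c}(\M)$.} Here I would first note that $\|z\|_{\h_p^c}=\|s_c(z)\|_p\le\|s_c(z)\|_2=\|z\|_2$ for $z\in L_2^0(\M)$, so $L_2^0(\M)\hookrightarrow\h_p^{0,c}(\M)$ contractively and any $\varphi\in(\h_p^{0,c}(\M))^*$ restricts to an $L_2$-bounded functional on $L_2^0(\M)$; by the Riesz representation there is a unique $y\in L_2^0(\M)$ with $\varphi(z)=\T(y^*z)$ on $L_2^0(\M)$. To bound $\|y\|_{\La_\b^c}$, fix $n\ge1$, $e\in\mathcal{P}_n$, set $w=(y-y_n)e$, and observe $r(w)\le e$, $\E_n(w)=0$; hence (for $w\ne 0$) $a=\T(e)^{1/2-1/p}\,w/\|w\|_2$ is a $(p,2)_c$-atom with $\|a\|_{\h_p^c}\le 1$, and a direct computation gives $\T(y^*a)=\T(e)^{1/2-1/p}\|w\|_2$. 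Therefore $\T(e)^{1/2-1/p}\|w\|_2=|\varphi(a)|\le\|\varphi\|$, that is $\|(y-y_n)e\|_2\le\T(e)^{\b+1/2}\|\varphi\|$; combined with $\E_1(y)=0$ this shows $y\in\La_\b^{0,c}(\M)$ with $\|y\|_{\La_\b^c}\le\|\varphi\|$. Finally $\varphi$ and $\phi_y$ agree on the dense subspace $L_2^0(\M)$, hence coincide, so $y\mapsto\phi_y$ is the asserted isomorphism.

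\emph{Main obstacle.} The only delicate point is legitimizing the interchange $\T(y^*x)=\sum_l\lambda_l\T(y^*a_l)$: the decomposition furnished by Corollary~\ref{weak-decomp} converges a priori only in $\h_p^c(\M)$, against which the linear functional $\T(y^*\,\cdot\,)$, with $y$ merely in $L_2(\M)$, need not be continuous. I would settle this by unwinding the explicit construction for a \emph{finite} martingale $x\in L_2^0(\M)$: the sum coming out of Theorem~\ref{main} is then finite, the passage through Remark~\ref{alg-crude} produces only finitely many crude atoms, and the decomposition of each crude atom $yb$ supplied by Proposition~\ref{crude} has partial sums equal to $yb\,p_K$ with $p_K\uparrow s(b)$, hence convergent in $L_2(\M)$. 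Consequently the whole weak atomic decomposition converges in $L_2(\M)$ for finite martingales, which validates the term-by-term pairing with $y$ and thereby the estimate in the first part.
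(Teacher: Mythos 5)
Your proposal is correct and follows essentially the same route as the paper: the forward direction pairs $y$ against the weak atomic decomposition of Corollary~\ref{weak-decomp} using the single-atom estimate $|\T(y^*a)|\le\|y\|_{\La_\b^c}$, and the converse tests $\varphi$ against the normalized atoms $\T(e)^{1/2-1/p}(y-y_n)e/\|(y-y_n)e\|_2$ (the paper packages this second step as the $q=2$ case of Proposition~\ref{lemma-L} together with the contractive inclusion $\h_{p,\at}^{0,c}(\M)\subseteq\h_p^{0,c}(\M)$, but the content is identical). The one genuine improvement is your treatment of the interchange $\T(y^*x)=\sum_l\lambda_l\T(y^*a_l)$: the paper asserts this without comment even though for $0<p<1$ the series is only known to converge in $\h_p^c(\M)$, and your observation that the explicit construction yields $L_2$-convergent partial sums for finite martingales (the crude atoms being finite in number and each splitting as $yb\,p_K$ with $p_K\uparrow s(b)$) is exactly the right way to legitimize it.
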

\begin{proof}
Since the formal inclusion $\h_{c,\at}^{0,c}(\M) \subseteq \h_p^{0,c}(\M)$ is a contraction and $(\h_{p,\at}^{0,c}(\M))^*=\La_\b^{0,c}(\M)$,  it is  straightforward to  deduce that if $\varphi \in (\h_p^{0,c}(\M))^*$ then there exists a unique $y \in \La_\beta^{0,c}(\M)$ with $\|y\|_{\La_\b^{0,c}} \leq \|\varphi\|_{(\h_p^{0,c})^*}$ and so that:
\[
\varphi(x)  =\phi_y(x)=\T(xy^*),  \ \  \forall x \in L_2^0(\M).
\]

Conversely, let $y \in \La_\beta^{0,c}(\M)$   and denote by $\phi_y$ the functional induced by $y$  on $\h_{p,\at}^{0,c}(\M)$ according to Proposition~\ref{lemma-L}. We claim  that $\phi_y$ defines a bounded functional on $\h_p^{0,c}(\M)$.

Fix $x \in L_2^0(\M)$. Write $x=\sum_{k\geq 1}\lambda_k a_k$ according to Corollary~\ref{weak-decomp} where the $a_k$'s are $(p,2)_c$-atoms. Then
\[
\T(y^*x)=\sum_{k\geq 1}\lambda_k\T(y^*a_k).
\]
As $\phi_y \in (\h_{p,\at}^{0,c}(\M))^*$, we have
\begin{align*}
|\T(y^*x)| &\leq   \sum_{k\geq 1} |\lambda_k|\, |\phi_y(a_k)|\\
&\leq  \sum_{k\geq 1} |\lambda_k|\, \|y\|_{\La_\b^{0,c}}\\
&\leq  \sqrt{2/p} \,\|y\|_{\La_\b^{0,c}} \|x\|_{\h_p^c}
\end{align*}
where in the last inequality, we use the estimate from Corollary~\ref{weak-decomp} (iv).
 This shows that the  functional $\varphi_y$ extends to a continuous functional on $ \h_p^{0,c}(\M)$ with
 \[
 \|\varphi_y\|_{(\h_p^{0,c})^*} \leq \sqrt{2/p}\, \|y\|_{\La_\b^{0,c}}.
 \]
 The proof is complete.
\end{proof}

\begin{corollary}\label{independence}
 For $\beta\geq 0$ and $1\leq \g<\infty$,
\[
\La_{\b, \g}^{c}(\M)=\La_{\b}^{c}(\M) \quad \text{with equivalent norm}.
\]
\end{corollary}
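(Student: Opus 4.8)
The plan is to prove Corollary~\ref{independence} by combining the duality identity of Theorem~\ref{dual} (together with its variant Proposition~\ref{lemma-L}) with the equivalence of atomic Hardy spaces established in Theorem~\ref{th:infty-atomicDecomp}. The key observation is that the Lipschitz spaces $\La_{\b,\g}^c(\M)$ arise as duals of atomic Hardy spaces, and the latter have already been shown to be independent of the atom exponent.

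First I would reduce to the subspaces $\La_{\b,\g}^{0,c}(\M)$. Indeed, the first coordinate $\|x_1\|_\infty$ appears identically in the definition of $\|x\|_{\La_{\b,\g}^c}$ for all $\g$, and one has a direct sum decomposition $\La_{\b,\g}^c(\M) = \La_{\b,\g}^{0,c}(\M) \oplus \M_1$ compatible with the norms (the $x_1$ part being measured the same way regardless of $\g$). So it suffices to show $\La_{\b,\g}^{0,c}(\M) = \La_{\b}^{0,c}(\M)$ with equivalent norms, uniformly handling $\b\ge 0$; for $\b = 1/p - 1 > 0$ with $0 < p < 1$ this is the range relevant to Theorem~\ref{dual}, and for $\b = 0$ one uses the $\mathsf{bmo}^c$ duality instead.

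Next, set $p$ by $\b = p^{-1} - 1$ (so $0 < p \le 1$), and let $q = \g'$, the conjugate exponent, so $1 < q \le \infty$ (with $q = 2$ when $\g = 2$). By Proposition~\ref{lemma-L}, $(\h_{p,\,\at_q}^{0,c}(\M))^* = \La_{\b,q'}^{0,c}(\M) = \La_{\b,\g}^{0,c}(\M)$ with equivalent norms. On the other hand, Theorem~\ref{th:infty-atomicDecomp} gives $\h_{p,\,\at_q}^c(\M) = \h_{p,\mathrm{at}}^c(\M)$ for all $2 \le q \le \infty$, hence also $\h_{p,\,\at_q}^{0,c}(\M) = \h_{p,\mathrm{at}}^{0,c}(\M)$ with equivalent (quasi-)norms once one restricts to $\E_1 = 0$. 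Taking duals of this identity and invoking Proposition~\ref{lemma-L} once more (with $q = 2$, giving the classical $\La_\b^{0,c}$ on the right), we obtain $\La_{\b,\g}^{0,c}(\M) = \La_\b^{0,c}(\M)$ with equivalent norms. Combined with the reduction in the previous paragraph, this yields the corollary. For the range $1 \le \g < 2$ one uses the full interval $2 < q \le \infty$ together with the contractive inclusions $\La_{\b,\g_2}^c \subseteq \La_{\b,\g_1}^c$ (already noted before the statement) to squeeze $\La_{\b,\g}^c$ between $\La_{\b,\infty}^c$ and $\La_{\b,1}^c$, both of which coincide with $\La_\b^c$ by the duality argument applied at the endpoints.

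The main obstacle I anticipate is bookkeeping rather than conceptual: one must check that all the duality identifications are genuinely implemented by the same pairing $x \mapsto \T(y^*x)$ on the common dense subspace $L_2^0(\M)$, so that the various Banach-space isomorphisms can be composed to give the asserted equality of the concrete function spaces $\La_{\b,\g}^{0,c}(\M)$ and $\La_\b^{0,c}(\M)$ (and not merely an abstract isomorphism of dual spaces). Since $L_2^0(\M)$ is dense in every $\h_{p,\,\at_q}^{0,c}(\M)$ by the remark at the start of the proof of Proposition~\ref{lemma-L}, and every element of $\La_{\b,\g}^{0,c}(\M)$ lies in $L_2(\M)$ by definition, this compatibility is automatic; the only care needed is to track the multiplicative constants coming from the equivalences in Theorem~\ref{th:infty-atomicDecomp} and Proposition~\ref{lemma-L}, which is routine.
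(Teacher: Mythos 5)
Your argument is correct, and essentially matches the paper's, only on the range $1\le\g\le 2$: there $q=\g'\ge 2$, so Theorem~\ref{th:infty-atomicDecomp} combined with the inclusions \eqref{inclusion-infty} does give $\h_{p,\,\at_q}^{0,c}(\M)=\h_{p,\at}^{0,c}(\M)$, and dualizing through Proposition~\ref{lemma-L} yields $\La_{\b,\g}^{0,c}(\M)=\La_{\b}^{0,c}(\M)$. The gap is the case $\g>2$, which your proposal never actually handles: there $q=\g'\in(1,2)$, and Theorem~\ref{th:infty-atomicDecomp} says nothing about $(p,q)_c$-atoms with $1<q<2$ --- the paper explicitly records that the identity $\h^c_{p,\,\at_q}(\M)=\h^c_{p,\at}(\M)$ remains open in that range. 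So the central step of your third paragraph, ``Theorem~\ref{th:infty-atomicDecomp} gives $\h_{p,\,\at_q}^c(\M)=\h_{p,\mathrm{at}}^c(\M)$,'' is unavailable precisely when it is needed. Your closing sentence compounds the confusion: the squeezing between the endpoints $\g=1$ and $\g=\8$ is offered for $1\le\g<2$, which is the range already covered by the direct argument, while the contractive inclusions $\La^c_{\b,\g_2}(\M)\subseteq\La^c_{\b,\g_1}(\M)$ for $\g_1<\g_2$ run the wrong way to say anything about $\g>2$; they only give the easy half $\|x\|_{\La_\b^{0,c}}\le\|x\|_{\La_{\b,\g}^{0,c}}$.

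For $\g>2$ the paper argues differently, and this is the step you are missing. Given $x\in\La_\b^{0,c}(\M)$, Theorem~\ref{dual} --- whose proof rests on the weak atomic decomposition of Corollary~\ref{weak-decomp} rather than on Theorem~\ref{th:infty-atomicDecomp} --- shows that $x$ induces a functional on the \emph{full} space $\h_p^{0,c}(\M)$ of norm at most $\sqrt{2/p}\,\|x\|_{\La_\b^{0,c}}$. Restricting along the contractive inclusion $\h^{0,c}_{p,\,\at_{\g'}}(\M)\subseteq\h_p^{0,c}(\M)$ and then applying Proposition~\ref{lemma-L} with $q=\g'$ converts this into $\|x\|_{\La_{\b,\g}^{0,c}}\le C_\g\|x\|_{\La_\b^{0,c}}$, which is the nontrivial inequality. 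If you insert this argument for $\g>2$ (and, for $\g=1$, justify the $q=\8$ endpoint of Proposition~\ref{lemma-L}, which as stated covers only $1<q<\8$), your proof is complete; your reduction to the subspaces with $\E_1(x)=0$ is fine as written.
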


\begin{proof}
If  $1\leq \g <2$, this is an immediate consequence of  Proposition~\ref{lemma-L}  and Theorem~\ref{th:infty-atomicDecomp}. Assume that $\g>2$. Let $x \in \La_{\b}^c(\M)$ and fix $p$ such that $\b=1/p -1$. By Theorem~\ref{dual}, $x$ induces a functional $\varphi_x$ on $\h_p^{0,c}(\M)$ with $\|\varphi_x\|_{(\h_{p}^{0,c})^*}
\leq \sqrt{2/p}\, \|x\|_{\La_{\b}^{0,c}}$. Since the formal inclusion $\h_{p,\,\at_{\g'}}^{0,c}(\M) \subseteq \h_p^{0,c}(\M)$  is a contraction, we have $\|\varphi_x\|_{(\h_{p,\,\at_{\g'}}^{0,c})^*}
\leq \sqrt{2/p}\, \|x\|_{\La_{\b}^{0,c}}$. By the proposition above,
$\|x\|_{\La_{\b,\g}^{0,c}}
\leq C_\g \|x\|_{\La_{\b}^{0,c}}$ for some constant $C_\g$. On the other hand, since $\g>2$, we already have $\|x\|_{\La_{\b}^{0,c}} \leq \|x\|_{\La_{\b,\g}^{0,c}}$
\end{proof}


\subsection{Fractional integrals on $\mathbf{\h_p(\M)}$  for $\mathbf{0<p<1}$}

 In this subsection, we use the atomic decomposition from Theorem~\ref{main} and Corollary~\ref{weak-decomp} to study  boundedness of fractional  integrals defined on $\h_p^c(\M)$ when $0<p<1$. We first recall the general setup and background for fractional integrals.
 We  further assume that $\M$ is a hyperfinite von Neumann algebra  and the filtration $(\M_n)_{n\geq 1}$ consists of finite dimensional von Neumann subalgebras of $\M$.

For $n\geq 1$,  we change the notation for the difference operator $d_n=\E_n -\E_{n-1}$: now we set $\mathcal{D}_n=\E_n -\E_{n-1}$ (where $\E_0=0$) and 
 \[
 \mathcal{D}_{n,p}:=\mathcal{D}_n(L_p(\M))=\Big\{x \in L_p(\M_k): \E_{n-1}(x)=0\Big\}.
 \]
  Since $\dim(\M_n)<\infty$,   the $\mathcal{D}_{n,p}$'s   are well-defined for all $0<p\leq \infty$. Moreover,    for $p\neq q$,  the two linear spaces  $\mathcal{D}_{n,p}$ and $\mathcal{D}_{n,q}$ coincide as sets. In particular,
 the formal identity $\iota_k: \mathcal{D}_{k,\infty} \to \mathcal{D}_{k,2}$  forms a natural  isomorphism between the two spaces. Following \cite{RW3},  we set for $n\geq 1$,
\begin{equation}\label{zeta}
\zeta_n:=\frac1{\| \iota_n^{-1}\|^{2}}.
\end{equation}
Clearly, $0<\zeta_n \leq 1$ for all $n\geq 1$ and   $\lim_{n\to \infty} \zeta_n=0$. Moreover,  for every  $x \in \mathcal{D}_{n,2}$, we have
\begin{equation}\label{infty-2}
\|x\|_\infty \leq \zeta_n^{-1/2} \|x\|_2.
\end{equation}
Our primary example is  the  standard  filtration on the  hyperfinite  type ${\rm II}_1$-factor $\mathcal{R}$. For this specific case,  we have $\zeta_n=2^n$ for $n\geq 1$ which is identical to the case of  classical dyadic martingales  formulated in \cite{Chao-Ombe}. We consider the following special type of martingale transforms:
\begin{definition}\label{definition:fractional}
For a given noncommutative  martingale $x=(x_n)_{n\geq 1}$ and $\a \in (0,\infty)$, we define the \emph{fractional
integral of order $\alpha$}  of $x$ to be the martingale $I^\alpha x=\{(I^\alpha x)_n\}_{n\geq 1}$ where  for every $n\geq 1$,
\[
(I^\alpha x)_n = \sum_{k=1}^{n} \zeta_k^{\alpha}dx_k
\]
with the sequence of scalars $(\zeta_k)_{k\geq 1}$ from \eqref{zeta}.
\end{definition}
In \cite{RW3},  the notation $I^\a x$ was used only for $0<\a<1$ but we will use here the same notation for the full range of $\a$. Fractional integrals of classical dyadic martingales  were studied in \cite{Chao-Ombe}.  We refer to \cite{RW3} for  an extensive   treatment of the case of noncommutative martingales. The results in \cite{RW3} cover mainly the Banach space range.
Below, we consider the boundedness of fractional integrals  defined on  martingale  Hardy spaces  $\h_p$ for $0<p<1$. The following is the main result for this subsection. It complements results from the appendix section of \cite{RW3}.

\begin{theorem}\label{fractional}
Assume that $0<p \leq 1$, $p<q<2$, and $\a=1/p -1/q$. The fractional integral $I^\a$ extends to  a bounded linear map from $\h_p^c(\M)$ into $\h_q^c(\M)$.

Similarly, $I^\a$ is also bounded from $\h_p(\M)$ into $\h_q(\M)$.
\end{theorem}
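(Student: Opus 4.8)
The plan is to derive both assertions from the algebraic atomic decomposition of $\h_p^c(\M)$, isolating one analytic point where the finite dimensionality of the $\M_n$ is genuinely used. By density it suffices to prove $\|I^\a x\|_{\h_q^c}\le C_{p,q}\|x\|_{\h_p^c}$ for every finite $L_\8$-martingale $x$ in $\M$. Write $x=x_1+x'$ with $x_1=\E_1(x)\in L_p(\M_1)$ and $x'\in\h_p^{0,c}(\M)$; since $I^\a$ multiplies the $\M_1$-component by $\zeta_1^\a$ we have $I^\a x=\zeta_1^\a x_1+I^\a x'$. For the first term, \eqref{infty-2} gives $\|z\|_\8\le\zeta_1^{-1/p}\|z\|_p$ on $L_p(\M_1)$ and, interpolating with $L_p$, $\|z\|_q\le\zeta_1^{-\a}\|z\|_p$; hence $\|I^\a x_1\|_{\h_q^c}=\zeta_1^\a\|x_1\|_q\le\|x_1\|_p\le\|x\|_{\h_p^c}$. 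So everything reduces to $\h_p^{0,c}(\M)$, where Theorem~\ref{main} writes $x'=\l a$ with $a$ an algebraic $\h_p^c$-atom and $|\l|\le\sqrt{2/p}\,\|x'\|_{\h_p^c}$. Since $\h_q^c(\M)=\h_{q,\alg}^c(\M)$ (Theorem~\ref{main} applied with $q$) and every $(q,2)_c$-atom, resp. algebraic $\h_q^c$-atom, has $\h_q^c$-norm $\le1$ by Lemma~\ref{lemma-alg}, the whole problem comes down to the single estimate $\|I^\a a\|_{\h_q^c}\le C$ for $a$ an algebraic $\h_p^c$-atom.

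Write $a=\sum_{n\ge1}y_nb_n$ as in Definition~\ref{alg}, with $\E_n(y_n)=0$, $b_n\in L_r(\M_n)$, $\sum_n\|y_n\|_2^2\le1$, $\|(\sum_n|b_n|^2)^{1/2}\|_r\le1$, where $1/r=1/p-1/2$. Exactly as in the proof of Lemma~\ref{lemma-alg}, the martingale differences of $\sum_ny_nb_n$ are $d_k=\sum_{m<k}d_k(y_m)b_m$, so $I^\a a=\sum_{n\ge1}(I^\a y_n)b_n$; because $\E_n(y_n)=0$, the martingale $I^\a y_n$ has differences only at levels $>n$, each weighted by $\zeta_k^\a\le\zeta_n^\a$ (the $\zeta_k$ being nonincreasing), whence $\|I^\a y_n\|_2\le\zeta_n^\a\|y_n\|_2$. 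Set $\widehat y_n=\zeta_n^{-\a}I^\a y_n$ and $\widehat b_n=\zeta_n^\a b_n$: then $\E_n(\widehat y_n)=0$, $\sum_n\|\widehat y_n\|_2^2\le\sum_n\|y_n\|_2^2\le1$, and $\widehat b_n\in L_u(\M_n)$ with $1/u=1/q-1/2$ (here the finite dimensionality of each $\M_n$ is used to change the Lebesgue exponent). Applying Lemma~\ref{lemma-alg} with exponent $q$ to $I^\a a=\sum_n\widehat y_n\widehat b_n$ gives
\[
\big\|I^\a a\big\|_{\h_q^c}\le\Big(\sum_n\|\widehat y_n\|_2^2\Big)^{1/2}\Big\|\Big(\sum_n|\widehat b_n|^2\Big)^{1/2}\Big\|_u\le\Big\|\Big(\sum_n\zeta_n^{2\a}|b_n|^2\Big)^{1/2}\Big\|_u.
\]
When $1\le q<2$ one may instead feed the $(p,2)_c$-atomic decomposition of Corollary~\ref{weak-decomp} into this argument: $I^\a$ sends a $(p,2)_c$-atom with projection $e\in\M_n$ to a bounded multiple of a $(q,2)_c$-atom with the same projection (using $\zeta_{n+1}\lesssim\T(e)$), and the $\ell_1$-bound on the coefficients suffices to control the $\h_{q,\alg}^c$-norm; this route avoids the next paragraph entirely.

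The remaining and main point is the \emph{column inequality}
\[
\Big\|\Big(\sum_n\zeta_n^{2\a}|b_n|^2\Big)^{1/2}\Big\|_u\le C\,\Big\|\Big(\sum_n|b_n|^2\Big)^{1/2}\Big\|_r
\]
for every adapted sequence $(b_n)$ with $b_n\in L_r(\M_n)$, where $\a=1/r-1/u>0$. This is where the finite dimensionality of the $\M_n$ is essential and is the step I expect to be the hard part: multiplication by the scalars $\zeta_n^\a$ is not $L_p$-smoothing, and the inequality is false without the adaptedness $b_n\in\M_n$. The plan is to group the indices into blocks $G_j=\{n:\ 2^{-j-1}<\zeta_n\le2^{-j}\}$, which are consecutive intervals since $\zeta_n\downarrow0$; then $\sum_{n\in G_j}|b_n|^2$ is a \emph{single} positive operator affiliated with the finite-dimensional algebra $\M_{\max G_j}$, whose least trace is comparable to $2^{-j}$, so by \eqref{infty-2} its $L_r$-norm controls its $L_u$-norm at the cost of a factor comparable to $2^{j\a}$, which is absorbed by $\sup_{n\in G_j}\zeta_n^\a\simeq2^{-j\a}$; one is then left to sum the block contributions $\gamma_j:=(\sum_{n\in G_j}\zeta_n^{2\a}|b_n|^2)^{1/2}$, which has to be done by exploiting the (near) orthogonality of the blocks through Lemma~\ref{rev-triangle} and the noncommutative Khintchine inequality, the bare triangle inequality being too lossy. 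Granting the column inequality, $\|I^\a a\|_{\h_q^c}\le C$, so $I^\a$ extends to a bounded map $\h_p^c(\M)\to\h_q^c(\M)$.

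For the diagonal piece needed in the mixed statement, if $w\in\h_p^d(\M)$ then $dw_k\in\mathcal D_{k,p}$, and \eqref{infty-2} together with interpolation give $\|dw_k\|_q\le\zeta_k^{-\a}\|dw_k\|_p$, hence $\|\zeta_k^\a dw_k\|_q\le\|dw_k\|_p$; since $q>p$ we have $\ell_p\subseteq\ell_q$, so $\|I^\a w\|_{\h_q^d}\le\|w\|_{\h_p^d}$ — this step is elementary. Finally, to get $I^\a\colon\h_p(\M)\to\h_q(\M)$, decompose $x=w+y+z$ with $w\in\h_p^d$, $y\in\h_p^c$, $z\in\h_p^r$, apply the column estimate to $y$, its row analogue (identical, with left supports replacing the factorizations) to $z$, and the diagonal estimate to $w$, and take the infimum over all such decompositions.
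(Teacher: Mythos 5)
Your reduction to atoms, your treatment of the $\M_1$-component, and your diagonal estimate all match the paper, but the core of your argument is deferred rather than proved: after writing $I^\a a=\sum_n(\zeta_n^{-\a}I^\a y_n)(\zeta_n^\a b_n)$ you land on the weighted square-function inequality $\|(\sum_n\zeta_n^{2\a}|b_n|^2)^{1/2}\|_u\le C\|(\sum_n|b_n|^2)^{1/2}\|_r$, which you yourself flag as ``the hard part'' and only ``grant''. This is a genuine gap, and the sketch you give for it rests on two properties of $(\zeta_n)$ that the paper never establishes and that need not hold for a general finite-dimensional filtration: (a) monotonicity of $\zeta_n$ (used already in ``$\zeta_k^\a\le\zeta_n^\a$ for $k>n$'' and again in your alternate route via ``$\zeta_{n+1}\lesssim\T(e)$''), and (b) a comparison between $\zeta_m$ --- which is defined through the \emph{difference} space $\mathcal D_m$ --- and the constant governing the $L_r(\M_m)\to L_u(\M_m)$ norm change, which is the minimal trace of a projection in $\M_m$. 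These coincide up to constants for the dyadic filtration but not in general, so the blocking argument as described does not close.

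The paper's proof avoids this entirely by putting the analytic weight on the $y$-side instead of the $b$-side. In Lemma~\ref{frac-crude} it takes a $(p_0,2)_c$-crude atom $a=yb$, invokes the boundedness of $I^\g\colon L_2(\M)\to L_r(\M)$ for $\g=1/2-1/r\in(0,1/2)$ (this is \cite[Corollary~2.7]{RW3}, the genuine smoothing result where the structure of the $\zeta_n$ enters, already proved there), and then \emph{splits $b$ itself}: $\widehat y=C_\g^{-1}(I^\g y)\,b^{\g r_0}$ and $\widehat b=b^{1-\g r_0}$, so that H\"older gives $\|\widehat y\|_2\le\|C_\g^{-1}I^\g y\|_r\|b^{\g r_0}\|_{1/\g}\le1$ while $\|\widehat b\|_{r_1}^{r_1}=\|b\|_{r_0}^{r_0}\le1$ trivially --- no weighted inequality for the $b$'s is ever needed. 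The restriction $\g<1/2$ forced by this lemma is then removed by factoring $I^\a$ as a composition of finitely many $I^\g$'s with small $\g$ (your proposal never addresses the range $\a\ge1/2$, where $\zeta_n^{2\a}$ weights would be even harder to handle directly). To repair your argument you would either have to prove your column inequality with the correct constants for a general finite-dimensional filtration, or import the $L_2\to L_r$ mapping property of $I^\g$ and redistribute the exponents as the paper does.
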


The  decisive part of the argument is contained in the next lemma:
\begin{lemma}\label{frac-crude}
Assume that $0<p_0 \leq 1$, $p_0<p_1<2$, and $\g=1/p_0-1/p_1 \in (0,1/2)$. There exists a constant  $C_\g$ such that $C_\g^{-1}I^\g a$ is a $(p_1,2)_c$-crude atom whenever $a$ is  a $(p_0,2)_c$-crude atom.
\end{lemma}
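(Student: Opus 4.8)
The plan is to unwind the definition of a $(p_0,2)_c$-crude atom and track how the fractional integral $I^\gamma$ interacts with the factorization. Recall that a $(p_0,2)_c$-crude atom $a$ factors as $a=yb$ with $y\in L_2(\M)$, $\E_m(y)=0$, $\|y\|_2\le 1$, and $b\in L_{s_0}(\M_m)$, $\|b\|_{s_0}\le 1$, where $1/p_0=1/2+1/s_0$. Since $b\in\M_m$ and $\E_m(y)=0$, every martingale difference $d(yb)_k = d(y)_k\, b$ for $k>m$ (and vanishes for $k\le m$), so $I^\gamma a = \big(\sum_{k>m}\zeta_k^\gamma d(y)_k\big) b = (I^\gamma y)\,b$, where by $I^\gamma y$ I mean the final value of the fractional integral of the martingale $(\E_k(y))_k$ — note $I^\gamma$ commutes with right multiplication by elements of $\M_m$ here precisely because those elements are martingale-invariant past level $m$ and $\E_m(y)=0$. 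Thus the natural candidate factorization of $C_\gamma^{-1}I^\gamma a$ is $(C_\gamma^{-1}I^\gamma y)\cdot b$: the second factor $b$ is unchanged and already lies in the unit ball of $L_{s_0}(\M_m)$. But for $C_\gamma^{-1}I^\gamma a$ to be a $(p_1,2)_c$-crude atom we need the \emph{first} factor to be in the unit ball of $L_2(\M)$ and the \emph{second} factor to be in the unit ball of $L_{s_1}(\M_m)$ with $1/p_1=1/2+1/s_1$; since $p_1>p_0$ we have $s_1>s_0$, so $\|b\|_{s_1}\le\|b\|_{s_0}\le 1$ by the normalization of the trace — good. The remaining task is therefore purely about the first factor: show $\|I^\gamma y\|_2\le C_\gamma$ for every $y\in L_2(\M)$ with $\E_m(y)=0$ and $\|y\|_2\le 1$; in fact it suffices (and is cleaner) to prove $I^\gamma$ is bounded on $\h_2^c=L_2$ with norm depending only on $\gamma$, independently of $m$.

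So the heart of the matter is the estimate $\|I^\gamma y\|_2\le C_\gamma\|y\|_2$ for $\gamma\in(0,1/2)$. By orthogonality of martingale differences in $L_2$,
\[
\|I^\gamma y\|_2^2=\sum_{k\ge 1}\zeta_k^{2\gamma}\|d(y)_k\|_2^2 .
\]
This is visibly \emph{not} bounded by $\|y\|_2^2=\sum_k\|d(y)_k\|_2^2$ with a uniform constant if the $\zeta_k$ could be large — but $0<\zeta_k\le 1$ for all $k$, so $\zeta_k^{2\gamma}\le 1$ and we get $\|I^\gamma y\|_2\le\|y\|_2$ immediately, with constant $C_\gamma=1$. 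Hence $C_\gamma^{-1}=1$ works and in fact $I^\gamma a$ itself is a $(p_1,2)_c$-crude atom (one may keep a $C_\gamma$ in the statement for uniformity with other lemmas or with the row case). To be careful: one should first check the identity $d(I^\gamma a)_k=\zeta_k^\gamma d(y)_k\,b$ at the level of martingale differences, using $b\in\M_m\subseteq\M_k$ for $k>m$ and $d(a)_k=d(y)_k\,b$, which follows because $a=yb$, $b$ is $\M_m$-measurable, and conditional expectations are $\M_m$-bimodule maps; and one should note $d(a)_k=0$ for $k\le m$ since $\E_m(y)=0$ forces $\E_m(a)=\E_m(y)b=0$, so the fractional integral only touches levels $k>m$ and the support/measurability constraints at level $m$ are preserved.

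The main (and essentially only) obstacle is the bookkeeping that $I^\gamma$ really does pass through the right factor cleanly, i.e. that $I^\gamma(yb)=(I^\gamma y)b$ as martingales when $b\in\M_m$ and $\E_m(y)=0$; once that identity is in hand, the $L_2$-bound is trivial because $\zeta_k\le 1$, and the $L_{s_1}$-bound on $b$ is trivial because $s_1>s_0$ and $\T$ is normalized. I would write out the difference-sequence identity explicitly, then conclude
\[
\|I^\gamma y\|_2^2=\sum_{k>m}\zeta_k^{2\gamma}\|d(y)_k\|_2^2\le\sum_{k>m}\|d(y)_k\|_2^2=\|y\|_2^2\le 1,
\]
which exhibits $C_\gamma^{-1}I^\gamma a=(I^\gamma y)\cdot b$ as the required $(p_1,2)_c$-crude atom with $C_\gamma=1$. (If one prefers to treat the case $\E_m(y)\ne 0$ or to allow a general martingale transform $I^\gamma$ acting also on the first level, the same argument works after absorbing the level-$m$ part, but for crude atoms the vanishing $\E_m(y)=0$ makes this unnecessary.)
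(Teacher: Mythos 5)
Your reduction to the factorization $I^\g a=(I^\g y)\,b$ is fine, but the step that disposes of the second factor is wrong, and it is precisely where the content of the lemma lives. You claim that since $s_1>s_0$ one has $\|b\|_{s_1}\le\|b\|_{s_0}\le 1$ ``by the normalization of the trace.'' The monotonicity goes the other way: for a normalized finite trace, H\"older gives $\|b\|_{q_1}\le\|b\|_{q_2}$ when $q_1\le q_2$, so $\|b\|_{s_0}\le\|b\|_{s_1}$ and membership of $b$ in the unit ball of $L_{s_0}(\M_n)$ gives no control on $\|b\|_{s_1}$. A sanity check confirms something must be off: your argument never uses $\g<1/2$ nor any smoothing property of $I^\g$ (only $\zeta_k\le1$), so if it were correct the identity map would already send $(p_0,2)_c$-crude atoms to $(p_1,2)_c$-crude atoms, which cannot be true --- being a $(p_1,2)_c$-crude atom with $p_1>p_0$ is the \emph{stronger} requirement on the $\M_n$-measurable factor.

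The paper's proof repairs exactly this point by splitting the gain of integrability between the two factors. Since $\g\in(0,1/2)$ one can write $\g=1/2-1/r$ with $r>2$, and \cite[Corollary~2.7]{RW3} gives that $I^\g:L_2(\M)\to L_r(\M)$ is bounded; this is where the hypothesis $\g<1/2$ and the actual mapping properties of the fractional integral enter. One then re-factorizes
\[
C_\g^{-1}I^\g a=\big[C_\g^{-1}(I^\g y)\,b^{\g r_0}\big]\cdot\big[b^{1-\g r_0}\big],
\]
where $1/p_0=1/2+1/r_0$. The surplus integrability of $I^\g y\in L_r$ absorbs the extra power $b^{\g r_0}$ through H\"older ($1/2=1/r+\g$), yielding an $L_2$ bound on the new left factor, while the remaining right factor $b^{1-\g r_0}$ satisfies $\|b^{1-\g r_0}\|_{r_1}^{r_1}=\|b\|_{r_0}^{r_0}\le1$ because $r_1(1-\g r_0)=r_0$ when $1/p_1=1/2+1/r_1$. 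Your proposal is missing this redistribution of powers of $b$, and without it the argument does not close.
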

\begin{proof}
Let $a$ be a $(p_0,2)_c$-crude atom and fix $r_0$ so that $1/p_0=1/2 +1/r_0$. There exist $n\geq 1$ and a factorization $a=yb$ with $\E_n(y)=0$, $\|y\|_2=1$, $b\geq 0$, and $b\in L_{r_0}(\M_n)$ with $\|b\|_{r_0}\leq 1$.

Since $\g \in (0,1/2)$,  there exists $r>2$  such that $\g =1/2 -1/r$. According to \cite[Corollary~2.7]{RW3}, $I^\g: L_2(\M) \to L_r(\M)$ is bounded. If we set  $C_\g:=\big\| I^\g: L_2(\M) \to L_r(\M)\big\|$, then  the operator $C_\g^{-1}I^\g y$ belongs to  $L_r(\M)$. Moreover, one can easily check that
$\E_n(C_\g^{-1}I^\g y)=0$ and $\big\|C_\g^{-1}I^\g y\big\|_r \leq 1$.

Let $\widehat{y}=C_\g^{-1} (I^\g y ) b^{\g r_0}$ and $\widehat{b}=b^{1-\g r_0}$. Then $C_\g^{-1} I^\g a= \widehat{y}  . \widehat{b}$ and we claim that this factorization satisfies the definition of $(p_1,2)_c$-crude atom. Indeed,  it is clear that
$\E_n(\widehat{y})=0$. Also since $\|b\|_{r_0}\leq 1$, by H\"older's inequality, it follows that
\begin{align*}
\big\|\widehat{y}\big\|_2 &\leq  \big\|C_\g^{-1} I^\g y\big\|_r  \big\|b^{\g r_0}\big\|_{1/\g}\\
&\leq  \big\|b^{r_0}\big\|_1^\g \leq 1.
\end{align*}
On the other hand, if $1/p_1=1/2 +1/r_1$, then one can easily verify that $r_1(1-\g r_0)=r_0$. Consequently, $\widehat{b} \in L_{r_1}(\M_n)$ with  $\big\|\widehat{b}\big\|_{r_1}^{r_1}= \big\| b \big\|_{r_0}^{r_0} =1$. The lemma is verified.
\end{proof}

\begin{proof}[Proof of Theorem~\ref{fractional}]  $\bullet$ We consider first the boundedness of $I^\a$  for column conditioned Hardy spaces.  We divide the proof into several cases.

-Case~1. Assume that $0<p\leq 1$, $p<q<2$ and $\a=1/p -1/q<1/2$.

Fix $x \in \h_p^c(\M)$ and consider its decomposition
${x=x_1 + \sum_{l\geq 1} \lambda_l a_l}$  according to Theorem~\ref{main} and Remark~\ref{alg-crude}. Here,  $x_1 \in L_p(\M_1)$ and for $l\geq 1$, $a_l$ is a $(p,2)_c$-crude atoms with $\E_l(a_l)=0$. We also have the estimate
 $\sum_{l\geq 1}|\lambda_l | \leq \sqrt{2/p}\big\|x\big\|_{\h_p^c}$.

 Since $0<\a<1/2$,
  Lemma~\ref{frac-crude} applies.  There  exists $C_\alpha$ such that   for every $l\geq 1$, $C_\a^{-1} I^\a (a_l)$ is a $(q,2)_c$-crude atoms  with $\E_l(C_\a^{-1} I^\a (a_l))=0$. It is straightforward to verify that  if $\sigma= \sqrt{2/p}\|x\|_{\h_p^c}$ then  $z=\sigma^{-1} \sum_{l\geq 1} \lambda_l C_\a^{-1} I^\a (a_l)$  is an algebraic $\h_q^c$-atom and
\[
I^\a x= \zeta_1^\a x_1 + C_\a \sigma z.
\]
We note  by assumption that $2p/q>1$.  As $\a q/2= q/(2p)-1/2$,  it follows from \cite[Lemma~2.4]{RW3} that
 $I^{\a q/2}: L_{2p/q}(\M) \to L_2(\M)$ is bounded. This  implies  in particular that for some constant $C_\a'$, we have
 \[
 (\zeta_1^\a \|x_1\|_q)^q= (\zeta_1^{\a q/2} \| |x_1|^{q/2}\|_2)^2  \leq (C_\a' \| |x_1|^{q/2}\|_{2p/q})^2= (C_\a')^2 \|x_1\|_p^q.
 \]
 We can conclude that
\begin{align*}
\big\| I^\a x \big\|_{\h_{q,\alg}^c}  &\leq (C_\a')^{2/q} \big\|x_1\big\|_p + C_\a \sqrt{2/p} \big\| x\big\|_{\h_p^c}\\
&\leq  \big[(C_\a')^{2/q}  + C_\a \sqrt{2/p}\big]  \big\| x\big\|_{\h_p^c}.
\end{align*}
Since $0<q<2$, this shows that $I^\a: \h_p^c(\M) \to \h_q^c(\M)$ is bounded.

\medskip

We remark that  the particular case $p=1$ is entirely covered by Case~1 since in this specific case,  we always have $0<\a<1/2$. Thus, for the remaining cases, we assume that $0<p<1$.

\medskip

-Case~2. Assume that $0<p<q\leq 1$ and $\a=1/p-1/q \geq 1/2$.

Let $n(\a)= \lfloor{ 2\a}\rfloor+1$ where $\lfloor \cdot \rfloor$ denotes  the  greatest integer function and set $\g= \a/ n(\a)$. Clearly, $0<\g<1/2$.  Let $p_0=p$ and for  $1\leq m \leq n(\a)$, we define inductively $p_m$  to be the index that satisfies $\g= 1/p_{m-1}-1/p_m$. We note that  $(p_m)_{m=0}^{n(\a)}$ is an  increasing  finite sequence of indices and
$p_{n(\a)}=q$. From Step~1, for every $1\leq m \leq  n(\a)$, $I^\g: \h_{p_{m-1}}^c(\M) \to \h_{p_m}^c(\M)$ is bounded.
We apply   $I^\g$ successively $n(\a)$-times and get $I^\a$ as the compositions of bounded maps:
\[
I^\a: \h_{p}^c(\M) \xrightarrow{I^\g} \h_{p_1}^c(\M) \xrightarrow{I^\g}  \h_{p_2}^c(\M) \xrightarrow{I^\g} \dots  \xrightarrow{I^\g} \h_{p_{n(\a)-1}}^c(\M)  \xrightarrow{I^\g}  \h_q^c(\M).
\]
Thus, $I^\a: \h_p^c(\M) \to \h_q^c(\M)$ is bounded.

\medskip

-Case~3. $0<p<1<q<2$.

Let $\a_1=1/p -1$ and $\a_2=1-1/q$. By Case~2, $I^{\a_1}: \h_p^c(\M) \to \h_1^c(\M)$ is bounded. Similarly, since $0<\a_2<1/2$, we have from Case~1 that $I^{\a_2}: \h_1^c(\M) \to \h_q^c(\M)$ is bounded. It follows that $I^\a= I^{\a_2} I^{\a_1}:\h_p^c(\M) \to \h_q^c(\M)$ is bounded. This completes the proof for the column part. By taking adjoints, we also have  that $I^\a:\h_p^r(\M) \to \h_q^r(\M)$ is also bounded.

\medskip

$\bullet$ We now verify that $I^\a:\h_p^d(\M) \to \h_q^d(\M)$. This will be deduced from the following inequality:
\begin{equation}\label{diag}
\zeta_k^{\a q} \big\|dx_k \big\|_q^q \leq \big\|dx_k\big\|_p^q, \quad k\geq 1.
\end{equation}
A version of this  inequality was stated in the proof of \cite[Corollary~A.7]{RW3} but the argument given there contains an error. We include  the corrected argument.
Fix $k\geq 1$.  First, we  claim that  $\|dx_k\|_\infty \leq  \zeta^{-1/p} \|dx_k\|_p$. Indeed, by  the definition of $\zeta_k$,  we have $\|dx_k\|_\infty \leq \zeta_k^{-1/2} \|dx_k\|_2$. Since
$\|dx_k\|_2 \leq \|dx_k\|_\infty^{1-(p/2)} \|dx_k\|_p^{p/2}$, it follows that
$\|dx_k\|_\infty \leq \zeta_k^{-1/2} \|dx_k\|_\infty^{1-(p/2)} \|dx_k\|_p^{p/2}$ which implies the claim. We now have the following estimates:
\begin{align*}
\big\|dx_k\big\|_q^q \leq &\big\|dx_k\big\|_\infty^{q-p} \big\|dx_k\big\|_p^p\\
&\leq \big(\zeta_k^{-1/p} \big\|dx_k\big\|_p\big)^{q-p} \big\|dx_k\big\|_p^p\\
&=\zeta_k^{-(q-p)/p} \big\|dx_k\big\|_p^q.
\end{align*}
As $\a q=(q-p)/p$, we have verified \eqref{diag} which  in particular implies that
\[
\big\| I^\a:\h_p^d(\M) \to \h_q^d(\M)\big\| \leq 1.
\]
Combining the column, the row, and the diagonal versions, we obtain the second statement of the theorem.
\end{proof}

A natural question that arises from  Theorem~\ref{fractional} is wether the boundedness of fractional integrals  remains valid when the domain is the   Hardy space $\H_p^c(\M)$  for $0<p<1$. We consider below a special situation where this is the case.  We  recall the notion of regular filtration.

\begin{definition}
A filtration  $(\M_n)_{n\geq 1}$ is  called  \emph{regular} with constant $C$ (or $C$-regular for short) if  for every  positive $x \in L_1(\M)$ and $n \geq 1$, the following holds:
\[
\E_n(x) \leq C\E_{n-1}(x).
\]
\end{definition}
Examples of  regular filtrations are the noncommutative dyadic filtration  and more generally  filtrations associated with bounded Vilenkin groups on the hyperfinite type ${\rm II}_1$-factor $\cal{R}$ (see \cite[Lemma~2.2]{Wu1} and \cite[Lemma~3.3]{Scheckter-Sukochev}, respectively).
In  classical martingale theory, it is a well-known fact that the two Hardy spaces $\h_p$ and $\H_p$ coincide for all $0<p < \infty$ whenever the filtration is regular (\cite[Corollary~2.23]{Weisz}). For the noncommutative case,  it is easy to deduce from the definition of regularity and the dual Doob inequality (\cite{Ju}) that  for $2\leq p<\infty$, the two Hardy spaces
$\h_p^c(\M)$  and $\H_p^c(\M)$ coincide when the filtration is regular.
Our  next result shows that this fact remains valid for $0<p<2$. This  may be of independent interest.
\begin{theorem}\label{reg-hardy} Assume that the  filtration  $(\M_n)_{n\geq 1}$ is $C$-regular for some $C>0$. Then  for $0<p<2$,
\[
\h_p^c(\M)=\H_p^c(\M) \quad \text{with equivalent norms.}
\]
More precisely, if $x \in \H_p^c(\M)$,  then
\[
\max\{\sqrt{p/2}, \sqrt{1/C}\,\}\big\|x \big\|_{\H_p^c} \leq \big\|x \big\|_{\h_p^c} \leq  C^{1/p-1/2} \big(\frac{2}{p}\big)^{1/p}  \big\| x \big\|_{\H_p^c}.
\]
\end{theorem}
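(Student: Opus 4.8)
The plan is to prove the two-sided norm comparison by establishing the pointwise-type inequalities between the conditioned square function $s_c(x)$ and the ordinary square function $S_c(x)$, exploiting $C$-regularity. The lower bound for $\|x\|_{\h_p^c}$ is the easy direction and in fact does not use regularity for one of the two estimates: since $\E_{k-1}|dx_k|^2 \geq 0$ and, by the operator convexity properties already recorded, one has for $0<p<2$ the comparison $\|S_c(x)\|_p \le \sqrt{2/p}\,\|x\|_{\h_p^c}$ — this is exactly Corollary~\ref{Best} applied with the roles set up so that $\|x\|_{\H_p^c}=\|S_c(x)\|_p$, giving $\sqrt{p/2}\,\|x\|_{\H_p^c} \le \|x\|_{\h_p^c}$. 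For the other half of the lower bound, regularity enters: from $\E_n(x)\le C\E_{n-1}(x)$ applied to $x=|dx_k|^2$ one gets $\E_{k-1}|dx_k|^2 \le C\,\E_k|dx_k|^2$ is not quite what is needed; rather one uses that $|dx_k|^2 \le$ (something controlled by) its conditional expectation via regularity, more precisely $\E_{k-1}\big(\E_k|dx_k|^2\big)=\E_{k-1}|dx_k|^2$ together with $|dx_k|^2=\E_k|dx_k|^2 + (|dx_k|^2-\E_k|dx_k|^2)$ and a regularity bound $\E_k|dx_k|^2 \le C\,\E_{k-1}|dx_k|^2$; summing in $k$ and taking $L_{p/2}$-quasinorms (using the triangle-type inequality for $\|\cdot\|_{p/2}$ from Lemma~\ref{rev-triangle} in the direction that helps, or its reverse) yields $\|s_c(x)\|_p \le \sqrt{C}\,\|S_c(x)\|_p$, i.e. $\sqrt{1/C}\,\|x\|_{\H_p^c}\le\|x\|_{\h_p^c}$. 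Taking the max of the two gives the stated lower bound.

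For the upper bound $\|x\|_{\h_p^c} \le C^{1/p-1/2}(2/p)^{1/p}\|x\|_{\H_p^c}$, the plan is to combine the atomic decomposition of $\H_p^c(\M)$ (via $(p,2)_c$-atoms / crude atoms) with a regularity argument showing that for a regular filtration every $(p,2)_c$-atom, or more conveniently every algebraic $\h_p^c$-atom, has controlled $\h_p^c$-norm — but actually the cleaner route is the reverse: decompose $x\in\H_p^c(\M)$ using the analogue of the construction in Theorem~\ref{main} but with $S_{c,n}(x)$ in place of $s_{c,n}(x)$. Write $S_n=S_{c,n}(x)$ (assumed invertible with bounded inverse by approximation), $S_0=0$, and set $x = \sum_{n\ge1} dx_n S_n^{-2+p}S_n^{2-p}$, then rearrange exactly as in the proof of Theorem~\ref{main} to get $x = x_1 + \sum_{l\ge1}\a_l\b_l$ with $\b_l=(S_{l+1}^{2-p}-S_l^{2-p})^{1/2}\in L_q(\M_{?})$. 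Here is where regularity is essential: $S_l$ is NOT $\M_{l-1}$-measurable (unlike $s_l$), but $C$-regularity of the filtration lets us compare $S_l$ with an $\M_{l-1}$-measurable quantity up to the constant $C$, so that after paying a factor $C^{1/p-1/2}$ (coming from an $L_\infty\to$ module-type estimate applied $\big(1/p-1/2\big)$ "times" in the Hölder exponent) the $\b_l$ become effectively predictable and Lemma~\ref{lemma-alg} applies. Then Lemma~\ref{mart-2} applied to $S_n$ gives $\sum_l\|\a_l\|_2^2 \le \frac2p\sum_n\T(S_n^p-S_{n-1}^p) = \frac2p\|x\|_{\H_p^c}^p$, and $\big\|(\sum_l|\b_l|^2)^{1/2}\big\|_q = \|S^{1-p/2}\|_q=\|x\|_{\H_p^c}^{p/q}$, so Lemma~\ref{lemma-alg} yields $\|x\|_{\h_p^c}\le (2/p)^{1/2}\|x\|_{\H_p^c}$ up to the regularity factor, and tracking the constant through the $L_p$-quasinorm interplay for $0<p<1$ produces exactly $C^{1/p-1/2}(2/p)^{1/p}$.

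The main obstacle I expect is precisely the point where $S_l=S_{c,l}(x)$ fails to be $\M_{l-1}$-measurable, so that the $\b_l$'s in the decomposition are not genuine "coefficients" adapted at the right stage — making Lemma~\ref{lemma-alg} not directly applicable. The key technical lemma to be isolated and proved is therefore: under $C$-regularity, for each $l$ there is a positive operator $\widetilde S_l\in\M_{l-1}$ with $C^{-1}\widetilde S_l \le S_l^2 \le C\,\widetilde S_l$ (e.g. $\widetilde S_l=\E_{l-1}(S_l^2)=s_{c,l}^2(x)$ works, since $S_l^2=\sum_{k<l}|dx_k|^2 + |dx_l|^2$ and $\E_{l-1}|dx_l|^2\le C\,\E_{l-1}$ of things already in $\M_{l-1}$... one must be careful), and then replacing $S_l^{2-p}$ by $\widetilde S_l^{1-p/2}$ inside the $\b_l$'s at the cost of the constant $C^{|1-p/2|}$ per factor, which over the telescoping sum organizes into the claimed power $C^{1/p-1/2}$. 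Once this measurability-vs-regularity trade is made precise — and it is a standard regularity manipulation, the same one underlying the classical $\h_p=\H_p$ equivalence — the rest is a routine repetition of the computations in the proofs of Theorem~\ref{main} and Corollary~\ref{Best}, and the constant bookkeeping for the quasi-triangle inequality when $0<p<1$.
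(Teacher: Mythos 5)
Your lower bound is right in substance and coincides with the paper's: one half is the general inequality $\|x\|_{\H_p^c}\le\sqrt{2/p}\,\|x\|_{\h_p^c}$ (note this is not Corollary~\ref{Best} itself, which bounds $\|x\|_p$ rather than $\|S_c(x)\|_p$; it is the companion inequality recorded in the remark after it and cited from \cite{Jiao-Ran-Wu-Zhou}), and the other half is the one-line consequence of regularity $|dx_k|^2=\E_k\big(|dx_k|^2\big)\le C\,\E_{k-1}\big(|dx_k|^2\big)$, hence $S_c^2(x)\le C\,s_c^2(x)$ and $\|x\|_{\H_p^c}\le\sqrt{C}\,\|x\|_{\h_p^c}$. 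Your intermediate claim ``$\|s_c(x)\|_p\le\sqrt{C}\,\|S_c(x)\|_p$'' points the wrong way: regularity dominates $S_c$ by $s_c$, not the reverse, and it is the former that yields the stated lower bound.

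The upper bound is where you genuinely diverge from the paper, and your route has two real gaps. Running the decomposition of Theorem~\ref{main} with $S_{c,n}(x)$ in place of $s_{c,n}(x)$ breaks first at the identity $\sum_l\|\a_l\|_2^2=\sum_n\T\big(S_n^{-2+p}(S_n^2-S_{n-1}^2)\big)$: in Theorem~\ref{main} this rests on the $L_2$-orthogonality of the terms $dx_n\,s_n^{-2+p}(\cdots)^{1/2}$ over $n$, which holds precisely because $s_n$ is predictable, so that $\E_{n-1}$ kills the cross terms. With $S_n\in\M_n\setminus\M_{n-1}$ the cross terms survive and the input to Lemma~\ref{mart-2} is simply not available. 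Second, the repair you sketch---replacing $S_l^{2-p}$ by a predictable comparable $\widetilde S_l$ at a cost of a power of $C$ per factor---does not act on the right objects: the $\b_l$'s are square roots of \emph{differences} $S_{l+1}^{2-p}-S_l^{2-p}$, and a two-sided operator comparison of $S_l^2$ with $\widetilde S_l$ (of which regularity in fact gives only one side, $S_l^2\le C\,\E_{l-1}(S_l^2)$, not the reverse) does not transfer to a comparison of differences of operator powers, and it destroys the telescoping identity $\sum_l|\b_l|^2=S^{2-p}$ on which the $L_q$-estimate and the appeal to Lemma~\ref{lemma-alg} depend.

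The paper's proof of the upper bound avoids all of this and uses no atomic decomposition: write $\|x\|_{\h_p^c}^p=\sum_n\T\big(s_c^{p-2}(x)(s_{c,n}^2(x)-s_{c,n-1}^2(x))\big)$, use the operator anti-monotonicity of $t\mapsto t^{(p-2)/2}$ to get $s_c^{p-2}(x)\le s_{c,n}^{p-2}(x)$, hence $\|x\|_{\h_p^c}^p\le\sum_n\T\big(s_{c,n}^{p-2}(x)\,\E_{n-1}|dx_n|^2\big)=\sum_n\T\big(s_{c,n}^{p-2}(x)\,|dx_n|^2\big)$ by predictability of $s_{c,n}$; then invoke regularity exactly once, in the harmless form $s_{c,n}^{p-2}(x)\le C^{1-p/2}S_{c,n}^{p-2}(x)$, and conclude with Lemma~\ref{mart-2} applied to the sequence $S_{c,n}$, which gives $\sum_n\T\big(S_{c,n}^{p-2}(x)(S_{c,n}^2-S_{c,n-1}^2)\big)\le\tfrac2p\,\|x\|_{\H_p^c}^p$. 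Taking $p$-th roots produces the constant $C^{1/p-1/2}(2/p)^{1/p}$ with no quasinorm bookkeeping. If you wish to keep your decomposition-based route, you would have to establish the orthogonality and the difference-of-powers comparison separately, and neither is routine.
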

\begin{proof} It was  proved in  \cite[Theorem~4.11]{Jiao-Ran-Wu-Zhou} that for general filtration, $\| x\|_{\H_p^c} \leq \sqrt{2/p} \|x\|_{\h_p^c}$.
On the other hand, it follows from the definition of $C$-regularity that
$S_c^2(x) \leq C s_c^2(x)$ which clearly implies that $\|x\|_{\H_p^c} \leq \sqrt{C} \|x\|_{\h_p^c}$. Thus, we have the first inequality.

For  the second inequality, we may assume by approximation that $x \in \M$ and for every $n\geq 1$, $S_{c,n}(x)$ is invertible with bounded inverse. Since $s_{c,1}(x)=S_{c,1}(x)$, it follows that the $s_{c,n}(x)$'s are also  invertible with bounded inverses. First, we put the $\h_p^c$-norm of $x$ in  the following form:
\begin{align*}
\|x\|_{\h_p^c}^p &=\T\big(s_c^p(x)\big)\\
&=\T\big( s_c^{p-2}(x)s_c^2(x)\big)\\
&=\sum_{n\geq 1}\T\big( s_c^{p-2}(x)(s_{c,n}^2(x)-s_{c,n-1}^2(x))\big).
\end{align*}
Since for $n\geq 1$, $s_{c,n}^2(x) \leq s_c^2(x)$ and $0<p<2$, we have
$s_{c}^{p-2}(x) \leq s_{c,n}^{p-2}(x)$. A fortiori,
\begin{align*}
\|x\|_{\h_p^c}^p  &\leq \sum_{n\geq 1}\T\big( s_{c,n}^{p-2}(x)(s_{c,n}^2(x)-s_{c,n-1}^2(x))\big)\\
&=\sum_{n\geq 1} \T\big( s_{c,n}^{p-2}(x) \E_{n-1}(|dx_n|^2)\big)\\
&=\sum_{n\geq 1} \T\big( s_{c,n}^{p-2}(x) |dx_n|^2\big)
\end{align*}
where in the last equality, we use the fact that the sequence $(s_{c,n}(x))_{n\geq 1}$ is  predictable.
The $C$-regularity implies that for every $n\geq 1$, $S_{c,n}^2(x) \leq Cs_{c,n}^2(x)$. Therefore,  $s_{c,n}^{p-2}(x) \leq C^{1-p/2} S_{c,n}^{p-2}(x)$. This further implies that
\begin{align*}
\|x\|_{\h_p^c}^p &\leq C^{1-p/2} \sum_{n\geq 1} \T\big( S_{c,n}^{p-2} (x)|dx_n|^2\big)\\
&=C^{1-p/2}\sum_{n\geq 1} \T\big( S_{c,n}^{p-2}(x)(S_{c,n}^2(x)- S_{c,n-1}^2(x))\big).
\end{align*}
We can now conclude from Lemma~\ref{mart-2} that
\begin{align*}
\|x\|_{\h_p^c}^p &\leq C^{1-p/2} \big(\frac{2}{p}\big)\sum_{n\geq 1} \T\big( S_{c,n}^p(x)- S_{c,n-1}^p(x)\big)\\
&=C^{1-p/2} \big(\frac{2}{p}\big) \big\|x \big\|_{\H_p^c}^p.
\end{align*}
The proof is complete.
\end{proof}
\begin{remark}
Using the row version of Theorem~\ref{reg-hardy}, we may state  the noncommutative extension of \cite[Corollary~2.23]{Weisz} that for regular filtration, $\h_p=\H_p$ for all $0<p<\infty$.
\end{remark}
We can now state from combining Theorem~\ref{fractional} and Theorem~\ref{reg-hardy} that  for the case of regular filtration,  the fractional integral $I^\a: \H_p^c(\M) \to \H_q^c(\M)$ is bounded whenever $0<p<1$, $p<q<2$, and $\a=1/p -1/q$.
In particular, this
 extends \cite[Theorem~3(i)]{Chao-Ombe} to noncommutative  dyadic martingales. It is  still  an open problem if this statement applies to general filtrations.

\
 
 {\it Acknowledgments.}\;  Z. Chen was partially supported by the Natural Science Foundation of China (No.11871468), Q. Xu by  the French ANR project (No. ANR-19-CE40-0002-01).
 


\begin{thebibliography}{10}

\bibitem{Bekjan-Chen-Perrin-Y}
T.~Bekjan, {Z.} Chen, {M.} Perrin, and Z.~Yin, \emph{Atomic decomposition and
  interpolation for {H}ardy spaces of noncommutative martingales}, J. Funct.
  Anal. \textbf{258} (2010), no.~7, 2483--2505. \MR{2584751 (2011d:46131)}

\bibitem{Carlen-Lieb1999}
{E. A.} Carlen and {E. H.} Lieb, \emph{A {M}inkowski type trace inequality and
  strong subadditivity of quantum entropy}, Differential operators and spectral
  theory, Amer. Math. Soc. Transl. Ser. 2, vol. 189, Amer. Math. Soc.,
  Providence, RI, 1999, pp.~59--68. \MR{1730503}


\bibitem{Chao-Ombe}
{J.-A.} Chao and H.~Ombe, \emph{Commutators on dyadic martingales}, Proc. Japan
  Acad. Ser. A Math. Sci. \textbf{61} (1985), no.~2, 35--38. \MR{798032
  (86k:60078)}

\bibitem{Alonso-Parcet}
J.~M. Conde-Alonso and J.~Parcet, \emph{Atomic blocks for noncommutative
  martingales}, Indiana Univ. Math. J. \textbf{65} (2016), no.~4, 1425--1443.
  \MR{3549207}

\bibitem{Haagerup-Junge-Xu}
U.~Haagerup, M.~Junge, and Q.~Xu, \emph{A reduction method for noncommutative
  {$L\sb p$}-spaces and applications}, Trans. Amer. Math. Soc. \textbf{362}
  (2010), no.~4, 2125--2165. \MR{2574890 (2011e:46103)}

\bibitem{Hong-Junge-Parcet}
G.~Hong, M.~Junge, and J.~Parcet, \emph{Algebraic {D}avis decomposition and
  asymmetric {D}oob inequalities}, Comm. Math. Phys. \textbf{346} (2016),
  no.~3, 995--1019. \MR{3537343}

\bibitem{Hong-Mei}
G.~Hong and T.~Mei, \emph{John-{N}irenberg inequality and atomic decomposition
  for noncommutative martingales}, J. Funct. Anal. \textbf{263} (2012), no.~4,
  1064--1097. \MR{2927404}

\bibitem{Jiao-Ran-Wu-Zhou}
Y.~Jiao, N.~Randrianantoanina, L.~Wu, and D.~Zhou, \emph{Square functions for
  noncommutative differentially subordinate martingales}, Commun. Math. Phys. \textbf{370} (2019), ??-??.

\bibitem{Jiao-Zhou-Lian-Zanin}
Y.~Jiao, D.~Zhou, L.~Wu, and D.~Zanin, \emph{Noncommutative dyadic martingales
  and {W}alsh-{F}ourier series}, J. Lond. Math. Soc. (2) \textbf{97} (2018),
  no.~3, 550--574. \MR{3816399}

\bibitem{Ju}
M.~Junge, \emph{Doob's inequality for non-commutative martingales}, J. Reine
  Angew. Math. \textbf{549} (2002), 149--190. \MR{2003k:46097}

\bibitem{Ju-OH}
\bysame, \emph{Embedding of the operator space {$OH$} and the logarithmic
  `little {G}rothendieck inequality'}, Invent. Math. \textbf{161} (2005),
  no.~2, 225--286. \MR{2180450}

\bibitem{ju-par-GAFA}
M.~Junge and J.~Parcet, \emph{Operator space embedding of {S}chatten
  {$p$}-classes into von {N}eumann algebra preduals}, Geom. Funct. Anal.
  \textbf{18} (2008), no.~2, 522--551. \MR{2421547}

\bibitem{Junge-Perrin}
M.~Junge and M.~Perrin, \emph{Theory of {$\mathcal H\sb p$}-spaces for
  continuous filtrations in von {N}eumann algebras}, Ast\'erisque (2014),
  no.~362, vi+134. \MR{3241706}

\bibitem{JX}
M.~Junge and Q.~Xu, \emph{Noncommutative {B}urkholder/{R}osenthal
  inequalities}, Ann. Probab. \textbf{31} (2003), no.~2, 948--995.
  \MR{2004f:46078}

\bibitem{JX3}
\bysame, \emph{Noncommutative {B}urkholder/{R}osenthal inequalities. {II}.
  {A}pplications}, Israel J. Math. \textbf{167} (2008), 227--282. \MR{2448025
  (2010c:46141)}

\bibitem{jx-orlicz}
\bysame, \emph{Representation of certain homogeneous {H}ilbertian operator
  spaces and applications}, Invent. Math. \textbf{179} (2010), no.~1, 75--118.
  \MR{2563760}


\bibitem{Kosaki}
H.~Kosaki, \emph{An inequality of {A}raki-{L}ieb-{T}hirring (von {N}eumann
  algebra case)}, Proc. Amer. Math. Soc. \textbf{114} (1992), no.~2, 477--481.
  \MR{1065951}

\bibitem{LP4}
F.~Lust-Piquard, \emph{In\'egalit\'es de {K}hintchine dans ${C}\sb
  p\;(1<p<\infty)$}, C. R. Acad. Sci. Paris S\'er. I Math. \textbf{303} (1986),
  289--292. \MR{87j:47032}

\bibitem{LPI}
F.~Lust-Piquard and G.~Pisier, \emph{Noncommutative {K}hintchine and {P}aley
  inequalities}, Ark. Mat. \textbf{29} (1991), 241--260. \MR{94b:46011}

\bibitem{OS}
A.~Os{\c{e}}kowski, \emph{Sharp martingale and semimartingale inequalities},
  Instytut Matematyczny Polskiej Akademii Nauk. Monografie Matematyczne (New
  Series) [Mathematics Institute of the Polish Academy of Sciences.
  Mathematical Monographs (New Series)], vol.~72, Birkh\"auser/Springer Basel
  AG, Basel, 2012. \MR{2964297}

\bibitem{PR2006} J. Parcet and N. Randrianantoanina,
Gundy's decomposition for non-commutative martingales and applications,
{\it Proc. London Math. Soc.} {\bf 93} (2006), 227-252.

\bibitem{Perrin2}
M.~Perrin, \emph{In\'egalit\'es de martingales non commutatives et
  applications}, Ph.D. Thesis dissertation (2011).

\bibitem{Perrin}
\bysame, \emph{A noncommutative {D}avis' decomposition for martingales}, J.
  Lond. Math. Soc. (2) \textbf{80} (2009), no.~3, 627--648. \MR{2559120
  (2011e:46104)}

\bibitem{P-Ricard}
G.~Pisier and E.~Ricard, \emph{The non-commutative {K}hintchine inequalities
  for {$0<p<1$}}, J. Inst. Math. Jussieu \textbf{16} (2017), no.~5, 1103--1123.
  \MR{3709005}

\bibitem{pis-shly}
G.~Pisier and D.~Shlyakhtenko, \emph{Grothendieck's theorem for operator
  spaces}, Invent. Math. \textbf{150} (2002), no.~1, 185--217. \MR{1930886}

\bibitem{PX}
G.~Pisier and Q.~Xu, \emph{Non-commutative martingale inequalities}, Comm.
  Math. Phys. \textbf{189} (1997), 667--698. \MR{98m:46079}

\bibitem{PX3}
\bysame, \emph{Non-commutative {$L\sp p$}-spaces}, Handbook of the geometry of
  Banach spaces, Vol.\ 2, North-Holland, Amsterdam, 2003, pp.~1459--1517.
  \MR{2004i:46095}

\bibitem{RW3}
N.~Randrianantoanina and L.~Wu, \emph{Noncommutative fractional integrals},
  Studia Math. \textbf{229} (2015), no.~2, 113--139. \MR{3453777}

\bibitem{Ran-Wu-Xu}
N.~Randrianantoanina, L.~Wu, and Q.~Xu, \emph{Noncommutative {D}avis type
  decompositions and applications}, J. Lond. Math. Soc. (2) \textbf{99} (2019),
  no.~1, 97--126.

\bibitem{Scheckter-Sukochev}
T.~Scheckter and F.~Sukochev, \emph{Weak type estimates for the noncommutative
  {V}ilenkin-{F}ourier series}, Integral Equations Operator Theory \textbf{90}
  (2018), no.~6, Art. 64, 19pp. \MR{3857654}

\bibitem{Wang}
G.~Wang, \emph{Sharp inequalities for the conditional square function of a
  martingale}, Ann. Probab. \textbf{19} (1991), no.~4, 1679--1688. \MR{1127721}
  
\bibitem{Watanabe}  
 K.~ Watanabe,  \emph{Dual of non-commutative $L^p$-spaces with $0<p<1$}, 
 Math. Proc. Cambridge Philo. Soc.  \textbf{103} (1988), 503--509. \MR{0932675}

\bibitem{Weisz2}
F.~Weisz, \emph{Martingale {H}ardy spaces for {$0<p\leq 1$}}, Probab. Theory
  Related Fields \textbf{84} (1990), no.~3, 361--376. \MR{1035662 (91d:60107)}

\bibitem{Weisz}
\bysame, \emph{Martingale {H}ardy spaces and their applications in {F}ourier
  analysis}, Lecture Notes in Mathematics, vol. 1568, Springer-Verlag, Berlin,
  1994. \MR{MR1320508 (96m:60108)}

\bibitem{Wu1}
L.~Wu, \emph{Multipliers for noncommutative {W}alsh-{F}ourier series}, Proc.
  Amer. Math. Soc. \textbf{144} (2016), no.~3, 1073--1085. \MR{3447661}

\bibitem{Xu-G}
Q.~Xu, \emph{Operator-space {G}rothendieck inequalities for noncommutative
  {$L\sb p$}-spaces}, Duke Math. J. \textbf{131} (2006), no.~3, 525--574.
  \MR{2219250 (2007b:46101)}

\end{thebibliography}

\def\cprime{$'$}
\providecommand{\bysame}{\leavevmode\hbox to3em{\hrulefill}\thinspace}
\providecommand{\MR}{\relax\ifhmode\unskip\space\fi MR }
\providecommand{\MRhref}[2]{%
  \href{http://www.ams.org/mathscinet-getitem?mr=#1}{#2}
}
\providecommand{\href}[2]{#2}

\end{document}